\Crefname{paragraph}{Section}{Sections}
\newcommand{\ensemblenombre}[1]{\mathbb{#1}}
\newcommand{\N}{\ensemblenombre{N}}
\newcommand{\R}{} 
\renewcommand{\R}{\ensemblenombre{R}}
\newcommand{\C}{\ensemblenombre{C}}
\newcommand{\norme}[1]{\left\lVert#1\right\rVert}
\theoremstyle{plain} 
\newtheorem{prop}{Proposition}[section] 
\newtheorem{theo}[prop]{Theorem}
\newtheorem{defprop}[prop]{Definition-Proposition}
\newtheorem{lem}[prop]{Lemma}
\newtheorem{cor}[prop]{Corollary}
\theoremstyle{definition}
\newtheorem{rmk}[prop]{Remark}
\newtheorem{app}[prop]{Application}
\newtheorem{ass}[prop]{Assumption}
\newtheorem{Exam}[prop]{Example}
\newtheorem{op}[prop]{Open problem}
\let\original@addcontentsline\addcontentsline
\newcommand{\dummy@addcontentsline}[3]{}
\newcommand{\DeactivateToc}{\let\addcontentsline\dummy@addcontentsline}
\newcommand{\ActivateToc}{\let\addcontentsline\original@addcontentsline}
\begin{document}
\title{Local controllability of reaction-diffusion systems around nonnegative stationary states}
\author{Kévin Le Balc'h}
\address{Kévin Le Balc'h, Univ Rennes, ENS Rennes, CNRS, IRMAR - UMR 6625, F-35000 Rennes, France}
\email{kevin.lebalch@ens-rennes.fr}
\maketitle
\begin{center}
\today
\end{center}
\begin{abstract}
We consider a $n \times n$ nonlinear reaction-diffusion system posed on a smooth bounded domain $\Omega$ of $\R^N$. This system models reversible chemical reactions. We act on the system through $m$ controls ($1 \leq m < n$), localized in some arbitrary nonempty open subset $\omega$ of the domain $\Omega$. We prove the local exact controllability to nonnegative (constant) stationary states in any time $T >0$. A specificity of this control system is the existence of some invariant quantities in the nonlinear dynamics that prevents controllability from happening in the whole space $L^{\infty}(\Omega)^n$. The proof relies on several ingredients. First, an adequate affine change of variables transforms the system into a cascade system with second order coupling terms. Secondly, we establish a new null-controllability result for the linearized system thanks to a spectral inequality for finite sums of eigenfunctions of the Neumann Laplacian operator, due to David Jerison, Gilles Lebeau and Luc Robbiano and precise observability inequalities for a family of finite dimensional systems. Thirdly, the source term method, introduced by Yuning Liu, Takéo Takahashi and Marius Tucsnak, is revisited in a $L^{\infty}$-context. Finally, an appropriate inverse mapping theorem enables to go back to the nonlinear reaction-diffusion system.
\end{abstract}

\small
\tableofcontents
\normalsize
\section{Introduction}
\subsection{Free system}
\indent We consider the following reversible chemical reaction: 
\begin{equation}\label{systchim}
\alpha_1 A_1 + \dots + \alpha_n A_n \rightleftharpoons \beta_1 A_1 + \dots + \beta_n A_n,
\end{equation}
where $n \in \N^{*}$, $A_1,\dots,A_n$ denote $n$ chemical species and $(\alpha_1, \dots, \alpha_n)$, $(\beta_1, \dots, \beta_n)$ belongs to $(\N)^n$ and are such that for every $1 \leq i \leq n$, $\alpha_i \neq \beta_i$.\\
\indent For $1 \leq i \leq n$, let $u_i(t,.) : \Omega \rightarrow \R$ be the concentration of the chemical component $A_i$ at time $t$. By using the law of mass action and Fick's law, $(u_i)_{1 \leq i \leq n}$ satisfies the following reaction-diffusion system:
\begin{align}\label{systNLF}
\left\{
\begin{array}{l l}
\forall 1 \leq i \leq n,\\
\partial_t u_i - \underbrace{d_i \Delta u_{i}}_{\text{diffusion}} = \underbrace{(\beta_i-\alpha_i)\left(\prod\limits_{k=1}^n u_k^{\alpha_k} - \prod\limits_{k=1}^n u_k^{\beta_k}\right)}_{\text{reaction}} &\mathrm{in}\ (0,T)\times\Omega,\\
\frac{\partial u_i}{\partial \nu} = 0 &\mathrm{on}\ (0,T)\times\partial\Omega,\\
u_i(0,.)=u_{i,0} &\mathrm{in}\  \Omega,
\end{array}
\right.
\end{align}
where $T \in (0,+\infty)$, $\Omega$ is a bounded, connected, open subset of $\R^N$ (with $N \geq 1$) of class $C^2$, $\nu$ is the outer unit normal vector to $\partial\Omega$ and for every $1 \leq i \leq n$, $d_i \in (0,+\infty)$ is the diffusion coefficient of the chemical species $A_i$.\\
\indent In general, global existence of classical solutions (in the sense of \cite[Definition (1.5)]{P}) or weak solutions (in the sense of \cite[Definition (5.12)]{P}) for \eqref{systNLF} is an open problem.
\begin{itemize}
\item For particular semilinearities with a so-called triangular structure (see \cite[Section 3.3]{P}), classical solutions exist in the time interval $[0,+\infty)$ and are unique. For example, take $n=2$, $\alpha_1 \geq 1$,  $\beta_2 = 1$, $\alpha_2 = \beta_1 = 0$ and apply \cite[Theorem 3.1]{P}. 
\item For at most quadratic nonlinearities, global existence of weak solutions holds (see \cite[Theorem 5.12]{P}). For instance, take $n=4$, $\alpha_1 = \alpha_3 = \beta_2 = \beta_4 = 1$, $\alpha_2 = \alpha_4 = \beta_1= \beta_3 = 0$. For spatial dimension $N \geq 3$, the recent works \cite{CGV} and \cite{SoP} (inspired by the previous works \cite{Kanel1} and \cite{Kanel2}) prove that the solutions are bounded for bounded initial data, which ensure global existence of classical solutions.
\item Without a priori $L^1$-bound on the nonlinearities, a challenging problem is to understand whether global solutions exist. For example, take $n=2$, $\alpha_1 = \beta_2 = 2$, $\beta_1 =  \alpha_2 = 3$ (see \cite[Problem 1]{P}).
\end{itemize}
Let us also mention that global existence of renormalized solutions holds in all cases for \eqref{systNLF} (see \cite{Fisch}).
\subsection{Control system and open question}
\label{SectionControlOpQuestion}
We assume that one can act on the system through controls localized on a nonempty open subset $\omega$ of $\Omega$. From a chemical viewpoint, it means that one can add or remove chemical species at a specific location of the domain $\Omega$.  More precisely, let 
\begin{equation}
\label{DefJ}
J \subset \{1, \dots, n\}\ \textrm{and}\  := \# J < n \ \text{be the number of controls.}
\end{equation}
We consider the control system:
\begin{align}
\label{systNLC}
\tag{NL-U}
\left\{
\begin{array}{l l}
\forall 1 \leq i \leq n,\\ 
\partial_t u_i - d_i \Delta u_{i} =\\ \qquad(\beta_i-\alpha_i)\left(\prod\limits_{k=1}^n u_k^{\alpha_k} - \prod\limits_{k=1}^n u_k^{\beta_k}\right) + h_i 1_{\omega} 1_{i \in J} &\mathrm{in}\ (0,T)\times\Omega,\\
\frac{\partial u_i}{\partial \nu} = 0 &\mathrm{on}\ (0,T)\times\partial\Omega,\\
u_i(0,.)=u_{i,0} &\mathrm{in}\  \Omega,
\end{array}
\right.
\end{align}
where $1_{i \in J} := 1\ \text{if}\ i \in J\ \text{and}\ 0\ \text{if}\ i \notin J$. Here, $(u_i(t,.))_{1 \leq i \leq n} : \Omega \rightarrow \R^n$ is the \textit{state} to be controlled, $(h_i(t,.))_{i \in J}  : \Omega \rightarrow \R^m$ is the \textit{control input} supported in $\omega$.\\
\indent Let $(u_i^*)_{1 \leq i \leq n}$ be a nonnegative stationary state of \eqref{systNLF} i.e.
\begin{equation}\label{Stat}
\forall 1 \leq i \leq n,\ u_i^* \in [0,+\infty)\ \text{and}\  \prod\limits_{k=1}^n {u_k^{*}}^{\alpha_k} = \prod\limits_{k=1}^n {u_k^{*}}^{\beta_k}.
\end{equation}
Note that the nonnegative stationary solutions of \eqref{systNLF} do not depend on the space variable  (see \Cref{solutionspositivesconstantes} in \Cref{StatioStates}). Thus, it is not restrictive to assume that $(u_i^{*})_{1 \leq i \leq n} \in [0,+\infty)^n$.\\
\indent The question we ask is the following one: For a given initial condition $(u_{i,0})_{1 \leq i \leq n}$, does there exist $(h_i)_{i \in J}$ such that the solution $(u_i)_{1 \leq i \leq n}$ of \eqref{systNLC} satisfies 
\begin{equation} 
\forall i \in \{1,\dots,n\},\ u_i(T,.) = u_i^*?
\label{conditionfinale}
\end{equation}
\indent Under appropriate assumptions (see \Cref{Assdkneqdl} and \Cref{asscouplordre0} below), we prove the controllability of \eqref{systNLC}, in an appropriate subspace of $L^{\infty}(\Omega)^n$, locally around $(u_i^{*})_{1 \leq i \leq n}$, with controls in $L^{\infty}((0,T)\times\Omega)^m$ (see \Cref{mainresult1} below). \\
\indent By an adequate affine transformation, the proof relies on the study of the null-controllability of an equivalent cascade system with second order coupling terms (see \Cref{sectionChangeVar} below).

\subsection{Bibliographical comments}

In this section, we recall some known results about the null-controllability of linear and semilinear parabolic systems with Neumann boundary conditions. We investigate the case of one control, i.e., $m=1$ in this section to simplify. We introduce the notation $$Q_T := (0,T)\times\Omega.$$
\subsubsection{Linear results} The null-controllability of the heat equation was proved independently by Gilles Lebeau, Luc Robbiano in 1995 (see \cite{LR}, \cite{JeLe} and the survey \cite{LLR}) and by Andrei Fursikov, Oleg Imanuvilov in 1996 (see \cite{FI} and \cite{FCG}).
\begin{theo}\cite[Chapter I, Theorem 2.1]{FI}\\
For every $z_0 \in L^2(\Omega)$, there exists $h \in L^2(Q_T)$ such that the solution $z$ of 
\[
\left\{
\begin{array}{l l}
\partial_t z-  \Delta z =  h 1_{\omega} &\mathrm{in}\ (0,T)\times\Omega,\\
\frac{\partial z}{\partial \nu} = 0 &\mathrm{on}\ (0,T)\times\partial\Omega,\\
 z(0,.)=z_0  &\mathrm{in}\ \Omega,
\end{array}
\right.
\]
satisfies $z(T,.)=0$.
\end{theo}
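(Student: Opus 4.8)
The plan is to prove this null-controllability result by duality, reducing it to an observability inequality for the adjoint equation and then establishing the latter through a global Carleman estimate. By the Hilbert Uniqueness Method, null-controllability of the forward system is equivalent to the existence of a constant $C>0$ such that every solution $\varphi$ of the backward adjoint equation
\[
\left\{
\begin{array}{ll}
-\partial_t \varphi - \Delta \varphi = 0 & \mathrm{in}\ Q_T,\\
\partial_\nu \varphi = 0 & \mathrm{on}\ (0,T)\times\partial\Omega,\\
\varphi(T,\cdot) = \varphi_T & \mathrm{in}\ \Omega,
\end{array}
\right.
\]
satisfies the observability inequality
\[
\norme{\varphi(0,\cdot)}_{L^2(\Omega)}^2 \le C \int_0^T\!\!\int_\omega \abs{\varphi}^2 \, dx\, dt.
\]
Concretely, given this inequality I would minimize over $\varphi_T \in L^2(\Omega)$ the strictly convex and coercive functional $J(\varphi_T) = \tfrac12 \int_0^T\!\int_\omega \abs{\varphi}^2 + \int_\Omega z_0\, \varphi(0,\cdot)$; its unique minimizer $\hat\varphi_T$ produces, through $h = \hat\varphi\, 1_\omega$, a control steering $z_0$ to $0$ at time $T$.

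The core of the argument is the observability inequality, which I would derive from a global Carleman estimate in the spirit of Fursikov and Imanuvilov. I would first construct a weight $\eta^0 \in C^2(\overline{\Omega})$ with $\eta^0 > 0$ in $\Omega$, with $\abs{\nabla \eta^0} > 0$ on $\overline{\Omega}\setminus\omega_0$ for some $\omega_0 \Subset \omega$, and satisfying the sign condition $\partial_\nu \eta^0 \le 0$ on $\partial\Omega$. Setting $\alpha(t,x) = (e^{2\lambda\norme{\eta^0}_\infty} - e^{\lambda\eta^0(x)})/(t(T-t))$ and $\xi(t,x) = e^{\lambda\eta^0(x)}/(t(T-t))$, the Carleman estimate asserts the existence of $C, \lambda_0, s_0$ such that for all $\lambda \ge \lambda_0$, $s \ge s_0$ and all $\varphi$ solving the adjoint equation,
\[
s^3\lambda^4 \int_{Q_T} e^{-2s\alpha}\, \xi^3\, \abs{\varphi}^2 \le C\, s^3\lambda^4 \int_0^T\!\!\int_\omega e^{-2s\alpha}\, \xi^3\, \abs{\varphi}^2 .
\]

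From this weighted estimate the observability inequality follows by standard weight manipulations combined with the dissipativity of the Neumann heat semigroup. On the subinterval $[T/4, 3T/4]$ the weight $e^{-2s\alpha}\xi^3$ is bounded below by a positive constant, so the left-hand side controls $\int_{T/4}^{3T/4}\!\int_\Omega \abs{\varphi}^2$ from below. Testing the adjoint equation against $\varphi$ and using the Neumann condition gives $\tfrac{d}{dt}\norme{\varphi(t,\cdot)}_{L^2(\Omega)}^2 = 2\norme{\nabla\varphi(t,\cdot)}_{L^2(\Omega)}^2 \ge 0$, so $t \mapsto \norme{\varphi(t,\cdot)}_{L^2(\Omega)}$ is nondecreasing and $\norme{\varphi(0,\cdot)}_{L^2(\Omega)}^2 \le \norme{\varphi(t,\cdot)}_{L^2(\Omega)}^2$ for every $t \in [0,T]$; integrating over $[T/4,3T/4]$ absorbs $\norme{\varphi(0,\cdot)}_{L^2}^2$ into the time-slab integral. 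On the right-hand side the weight is uniformly bounded above, yielding the clean observation over $\omega\times(0,T)$.

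The step I expect to be the main obstacle is the Carleman estimate itself, and within it precisely the treatment of the Neumann boundary. Unlike the Dirichlet case, the trace of $\varphi$ on $\partial\Omega$ does not vanish, so the many integrations by parts in the Carleman computation generate boundary integrals that must be controlled; the sign condition $\partial_\nu \eta^0 \le 0$ is exactly what guarantees these terms carry a favorable sign and do not spoil the estimate. Constructing a weight $\eta^0$ meeting all three requirements simultaneously — positivity in $\Omega$, a critical-point-free gradient outside $\omega_0$, and the boundary sign condition — is the delicate geometric input, but it is by now classical and can be invoked directly.
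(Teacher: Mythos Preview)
The paper does not give its own proof of this theorem: it is stated in the bibliographical survey (Section on linear results) with a direct citation to Fursikov--Imanuvilov and no argument is supplied. Your outline---reduction by HUM to an observability inequality for the Neumann adjoint, then a global Carleman estimate with a weight satisfying $\partial_\nu\eta^0\le 0$ on $\partial\Omega$ to handle the boundary terms, followed by the standard weight bounds and energy dissipation to pass from the weighted inequality to the clean observability estimate---is precisely the Fursikov--Imanuvilov approach that the citation points to, and the sketch is correct. Note in passing that the paper itself, for its own linear results later on, favors the Lebeau--Robbiano spectral method over Carleman estimates, but for this particular background theorem it simply invokes the reference you are reconstructing.
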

In the work \cite{FCGBGP}, Enrique Fernández-Cara, Manuel González-Burgos, Sergio Guerrero and Jean-Pierre Puel prove the same null-controllability result for more general parabolic operators, i.e., $\partial_t z  - \Delta z  + B(t,x) . \nabla z + a(t,x) z $ with $a \in L^{\infty}(Q_T)$, $B \in L^{\infty}(Q_T)^n$ and linear Robin conditions, i.e., $\frac{\partial z}{\partial \nu} + \beta(t,x) z = 0$ on $(0,T)\times\partial\Omega$ with $\beta \in L^{\infty}((0,T)\times\partial\Omega;\R^{+})$.\\
\indent Then, the null-controllability of coupled linear parabolic systems has been a challenging issue. Let us now focus on a \textit{cascade} system with \textit{coupling terms of zero order}. The following result comes from an easy adaptation of Manuel González-Burgos and Luz de Teresa's proof in the case of Dirichlet boundary conditions to Neumann boundary conditions.
\begin{theo}\label{TheoCascade}\cite[Theorem 1.1]{GBT}\\
Let $(d_i)_{1 \leq i \leq n} \in (0,+\infty)^{n}$, $(a_{i,j})_{1 \leq i ,j \leq n} \in \R^{n \times n}$ and assume that $a_{i+1,i} \neq 0$ for every $1 \leq i \leq n$. Then, for every $(z_{i,0})_{1 \leq i \leq n} \in L^2(\Omega)^n$, there exists $h \in L^2(Q_T)$ such that the solution $(z_i)_{1 \leq i \leq n}$ of 
\[
\left\{
\begin{array}{l l l}
\partial_t z_1-  d_1 \Delta z_1 = \sum\limits_{k=1}^n a_{1,k} z_k +  h 1_{\omega} &\mathrm{in}\ (0,T)\times\Omega,&\\
\partial_t z_i-  d_i \Delta z_i = a_{i,i-1} z_{i-1} &\mathrm{in}\ (0,T)\times\Omega,&2 \leq i \leq n,\\
\frac{\partial z_i}{\partial \nu} = 0 &\mathrm{on}\ (0,T)\times\partial\Omega,& 1 \leq i \leq n,\\
 z_i(0,.)=z_{i,0}  &\mathrm{in}\ \Omega, & 1 \leq i \leq n,
\end{array}
\right.
\]
satisfies $z_i(T,.)=0$ for every $1 \leq i \leq n$.
\end{theo}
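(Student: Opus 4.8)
The plan is to reduce the statement to an observability inequality for the adjoint system via the Hilbert Uniqueness Method, and then to prove that inequality by a global Carleman estimate adapted to the Neumann boundary condition, exploiting the cascade structure through the nonvanishing subdiagonal coefficients $a_{i+1,i}$. By duality, the $L^2$ null-controllability with a single control $h \in L^2(Q_T)$ on the first equation is equivalent to
\[
\norme{\Phi(0,\cdot)}_{L^2(\Omega)^n}^2 \leq C \int_0^T\!\!\int_\omega \abs{\varphi_1}^2
\]
for every solution $\Phi=(\varphi_1,\dots,\varphi_n)$ of the backward cascade system
\[
\left\{
\begin{array}{l l}
-\partial_t \varphi_i - d_i \Delta \varphi_i = a_{1,i}\varphi_1 + a_{i+1,i}\varphi_{i+1} & \mathrm{in}\ Q_T,\\
\frac{\partial \varphi_i}{\partial \nu} = 0 & \mathrm{on}\ (0,T)\times\partial\Omega,\\
\varphi_i(T,\cdot)=\varphi_{i,T} & \mathrm{in}\ \Omega,
\end{array}
\right.
\]
for $1\le i\le n$, with the convention $\varphi_{n+1}\equiv 0$. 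Since the only observation is $\varphi_1$ on $(0,T)\times\omega$, the whole difficulty is to recover $\varphi_2,\dots,\varphi_n$ from $\varphi_1$.

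First I would establish a global Carleman estimate for the scalar Neumann heat operator $-\partial_t - d\Delta$. Fixing $\omega_0\Subset\omega$, one builds a weight $\psi\in C^2(\overline{\Omega})$ with $\psi>0$ in $\Omega$, $\abs{\nabla\psi}>0$ on $\overline{\Omega\setminus\omega_0}$, and — this is the sole point where the Neumann condition enters, compared to the Dirichlet proof — $\frac{\partial \psi}{\partial\nu}\le 0$ on $\partial\Omega$, so that the boundary terms arising from the integrations by parts carry the correct sign and are discarded. With the usual weights $\theta=e^{s\alpha}$ and $\xi$ built from $\psi$ and a large parameter, this yields the standard inequality bounding a weighted global norm of $\varphi$, $\partial_t\varphi$, $\Delta\varphi$ and $\nabla\varphi$ by the weighted right-hand side plus a local term $\int\!\!\int_{(0,T)\times\omega_0}\theta^{2}\xi^3\abs{\varphi}^2$.

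The main effort lies in the next step: apply this scalar estimate to each equation of the adjoint system and sum the inequalities after multiplying by suitably chosen large constants. For the parameters large enough, the coupling terms $a_{1,i}\varphi_1+a_{i+1,i}\varphi_{i+1}$ on the right-hand sides, being of lower order than the corresponding global left-hand side terms, are absorbed. The remaining obstruction is the family of local terms $\int\!\!\int_{(0,T)\times\omega_0}\theta^2\xi^3\abs{\varphi_i}^2$ for $i\ge 2$, which must be controlled by $\int\!\!\int_{(0,T)\times\omega}\abs{\varphi_1}^2$. Here the cascade structure is decisive: since $a_{i+1,i}\neq 0$, the $i$-th equation gives
\[
\varphi_{i+1}=\frac{1}{a_{i+1,i}}\left(-\partial_t\varphi_i-d_i\Delta\varphi_i-a_{1,i}\varphi_1\right),
\]
so the local term in $\varphi_{i+1}$ is expressed through $\varphi_i$ and its first and second derivatives. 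Integrating by parts twice against the weight on a nested chain $\omega_0\Subset\omega_1\Subset\dots\Subset\omega$ of cutoff domains, together with a Caccioppoli-type local energy inequality, bounds the local term in $\varphi_{i+1}$ by $\varepsilon$ times the global left-hand side plus $C_\varepsilon$ times a local term in $\varphi_i$. Iterating from $i=n-1$ down to $i=1$ reduces everything to a local integral of $\varphi_1$ over $\omega$. The expected difficulty is precisely the bookkeeping of weight powers and absorption constants so that this chain of inequalities closes.

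Finally, from the resulting global Carleman estimate for the system I would deduce the observability inequality along the classical route: restrict the weights to a subinterval $[T/4,3T/4]$ on which they are bounded from below, and combine with the dissipation estimate for the backward system to propagate the bound down to $t=0$, obtaining $\norme{\Phi(0,\cdot)}_{L^2(\Omega)^n}^2\le C\int_0^T\!\!\int_\omega\abs{\varphi_1}^2$. By the duality argument this yields the announced $L^2$ null-controllability of the cascade system with a single control, completing the proof.
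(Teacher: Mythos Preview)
Your proposal is correct and follows precisely the approach of Gonz\'alez-Burgos and de Teresa \cite{GBT}, to which the paper defers: the paper does not give its own proof of this statement but simply cites \cite[Theorem 1.1]{GBT} and remarks that the Dirichlet argument adapts to Neumann boundary conditions. Your sketch---duality to an observability inequality, a scalar Neumann Carleman estimate with the sign condition $\partial\psi/\partial\nu\le 0$ on $\partial\Omega$, and the iterative elimination of the local terms $\varphi_{i+1}$ through the cascade equations---is exactly that adaptation.
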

Roughly speaking, the component $z_1$ is controlled by the control input $h$, the component $z_2$ is controlled by $z_1$ thanks to the coupling term $a_{2,1}z_1$, \dots, the component $z_n$ is controlled by $z_{n-1}$ thanks to the coupling term $a_{n,n-1} z_{n-1}$.\\
\indent The following result for a $2 \times 2$ linear parabolic system with a “cross-diffusion” term is due to Sergio Guerrero. We introduce the function space $$L_{\sigma}^{2}(\Omega):= \left\{ z \in L^2(\Omega)\ ;\ \int_{\Omega} z = 0\right\}.$$
\begin{theo}\label{Guerrero}\cite[Theorem 1]{G}\\
Let $(d_1,d_2) \in (0,+\infty)^2$, $a,b \in \R$. Then, for every $(z_{1,0},z_{2,0}) \in L^2(\Omega) \times L_{\sigma}^{2}(\Omega)$, there exists $h \in L^2(Q_T)$ such that the solution $(z_1,z_2)$ of 
\[
\left\{
\begin{array}{l l}
\partial_t z_1-  d_1 \Delta z_1 = a z_1 + b z_2 +  h 1_{\omega} &\mathrm{in}\ (0,T)\times\Omega,\\
\partial_t z_2-  d_2 \Delta z_2 = \Delta z_1 &\mathrm{in}\ (0,T)\times\Omega,\\
\frac{\partial z_1}{\partial \nu} = \frac{\partial z_2}{\partial \nu} = 0 &\mathrm{on}\ (0,T)\times\partial\Omega,\\
 (z_1,z_2)(0,.)=(z_{1,0},z_{2,0})  &\mathrm{in}\ \Omega,
\end{array}
\right.
\]
satisfies $(z_1,z_2)(T,.)=0$.
\end{theo}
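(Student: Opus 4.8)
The plan is to deduce the statement from the Hilbert Uniqueness Method, so the whole matter reduces to an observability inequality for the adjoint system. Writing this adjoint, one is led to
\[
\left\{
\begin{array}{l l}
-\partial_t \varphi_1 - d_1 \Delta\varphi_1 = a\varphi_1 + \Delta\varphi_2 & \mathrm{in}\ (0,T)\times\Omega,\\
-\partial_t \varphi_2 - d_2 \Delta\varphi_2 = b\varphi_1 & \mathrm{in}\ (0,T)\times\Omega,\\
\frac{\partial\varphi_1}{\partial\nu} = \frac{\partial\varphi_2}{\partial\nu} = 0 & \mathrm{on}\ (0,T)\times\partial\Omega,\\
(\varphi_1,\varphi_2)(T,.) = (\varphi_{1,T},\varphi_{2,T}) & \mathrm{in}\ \Omega,
\end{array}
\right.
\]
and, since the control acts only on the first equation and is localized in $\omega$, the inequality to be proved is
\[
\norme{\varphi_1(0,.)}_{L^2(\Omega)}^2 + \norme{\varphi_2(0,.)}_{L^2(\Omega)}^2 \leq C \int_0^T\int_\omega \abs{\varphi_1}^2 .
\]
I would first record the obstruction hidden in the problem: the pair $(\varphi_1,\varphi_2) = (0,c)$, with $c$ a constant, solves the adjoint system while being invisible to the observation. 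This is exactly why the controllability can only hold in $L^2(\Omega)\times L_\sigma^2(\Omega)$, and accordingly the inequality is only required for data orthogonal to this constant mode; the spatial average of $\varphi_2$ obeys a trivial finite dimensional dynamics that is dealt with separately.

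The core of the argument is a global Carleman estimate. Choosing the usual Fursikov--Imanuvilov weights $\alpha,\xi$ associated with a function vanishing on $\partial\Omega$ and without critical points outside a subset $\omega_0 \Subset \omega$, I would apply the scalar Neumann Carleman inequality to each equation, with sources $a\varphi_1 + \Delta\varphi_2$ and $b\varphi_1$ respectively, and sum. The zeroth order couplings $a\varphi_1$ and $b\varphi_1$ are harmless, being absorbed by the dominant $s^3\lambda^4$-term of the left-hand side once $s$ and $\lambda$ are large. The step I expect to be the main obstacle is the second order coupling $\Delta\varphi_2$ sitting in the source of the first equation: the scalar estimate only controls $s^{-1}\xi^{-1}\abs{\Delta\varphi_2}^2$ on the left, whereas the source produces a full $\abs{\Delta\varphi_2}^2$ on the right, a mismatch of a factor $s\xi$ that no naive summation can close.

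To break this deadlock I would refuse to treat $\Delta\varphi_2$ as an external source. Using the first equation one has $\Delta\varphi_2 = -\partial_t\varphi_1 - d_1\Delta\varphi_1 - a\varphi_1$, so the crossed term can be re-expressed through $\varphi_1$ and its own derivatives, which the Carleman machinery already controls with the right weights; concretely this amounts to an integration by parts transferring the extra Laplacian from $\varphi_2$ onto the weight and the $\varphi_1$-factor. An equivalent route, when $b \neq 0$, is to eliminate $\varphi_1$ through $b\varphi_1 = -\partial_t\varphi_2 - d_2\Delta\varphi_2$, obtaining a single fourth order (in space) parabolic equation for $\varphi_2$, proving a Carleman estimate for it, and transferring the observation back to $\varphi_1$ by the same relation. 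Either way, the delicate point is the careful weighting that makes every boundary contribution, on $(0,T)\times\partial\Omega$ and at the times $t=0,T$, either vanish by the Neumann condition or be absorbed; this bookkeeping is the technical heart of the proof.

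It then remains to dominate the local term in $\varphi_2$ produced by the Carleman estimate by the admissible observation of $\varphi_1$. Here I would once more invoke the first equation: on $\omega$ the quantity $\Delta\varphi_2$ is expressed through $\varphi_1$ and its derivatives, and a Caccioppoli type energy argument localized in $\omega$ lets one bound the local $\varphi_2$-contribution by $\int_0^T\int_\omega \abs{\varphi_1}^2$. Combining this with the standard dissipation estimate, which controls $\norme{\varphi_i(0,.)}_{L^2(\Omega)}$ by the weighted global integrals over a subinterval such as $(T/4,3T/4)$, yields the observability inequality. By duality this provides, for every $(z_{1,0},z_{2,0}) \in L^2(\Omega)\times L_\sigma^2(\Omega)$, a control $h \in L^2(Q_T)$ driving the state to $(0,0)$ at time $T$.
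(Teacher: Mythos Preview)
This theorem is not proved in the paper; it is quoted from Guerrero's article \cite{G} as part of the bibliographical survey in Section~1.3. There is therefore no ``paper's own proof'' to compare against. Your sketch follows the Carleman-estimate route, which is precisely the method of Guerrero's original work, and the outline you give (scalar Carleman on each equation, handling the second-order coupling $\Delta\varphi_2$ by substitution/integration by parts rather than as a raw source, then eliminating the local $\varphi_2$-term via the equation) is a fair high-level summary of that argument.

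What is worth noting is that the present paper develops a \emph{different} machinery for its own cascade results (Theorems~\ref{TheoCascadeDeux} and~\ref{theolinear}): the Lebeau--Robbiano strategy combined with the spectral inequality and precise finite-dimensional observability estimates. Remark~\ref{RMKCarl} explicitly says that combining Carleman techniques \`a la Guerrero with the cascade Carleman of \cite{GBT} only works up to $n\leq 5$, which is why the authors abandon Carleman estimates altogether in favor of the spectral approach. So while your proposal is the correct strategy for Guerrero's $2\times2$ theorem as stated, it is orthogonal to the methodology this paper is advertising; the paper cites Guerrero's result as motivation and then proves its generalizations by a route that avoids the very difficulties you identify.
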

The main difference with \Cref{TheoCascade} is that now, the \textit{coupling term is of second order $\Delta z_1$}. The condition $\int_{\Omega} z_{2,0} = 0$ is necessary because by integrating with respect to the space variable the second equation of the system, we get 
$$ \frac{d}{dt} \int_{\Omega} z_2(t,.) = 0.$$
In particular if $z_2(T,.) = 0$, then we need $\int_{\Omega} z_{2,0} = 0$.\\
\indent In view of \Cref{TheoCascade}  and \Cref{Guerrero}, a natural question is: are cascade cross-diffusion systems (of arbitrary size $n \geq 2$) null-controllable? A byproduct of this article is a positive answer to this question.
\begin{theo}\label{TheoCascadeDeux}
Let $(d_i)_{1 \leq i \leq n} \in (0,+\infty)^{n}$, $(a_{i,j})_{1 \leq i ,j \leq n} \in \R^{n \times n}$ and assume that $a_{2,1} \neq 0$. Then, for every $(z_{i,0})_{1 \leq i \leq n} \in L^2(\Omega)\times L_{\sigma}^2(\Omega)^{n-1}$, there exists $h \in L^2(Q_T)$ such that the solution $(z_i)_{1 \leq i \leq n}$ of 
\[
\left\{
\begin{array}{l l l}
\partial_t z_1-  d_1 \Delta z_1 = \sum\limits_{k=1}^n a_{1,k} z_k +  h 1_{\omega} &\mathrm{in}\ (0,T)\times\Omega,&\\
\partial_t z_2-  d_2 \Delta z_2 =   a_{2,1} z_1 &\mathrm{in}\ (0,T)\times\Omega,&\\
\partial_t z_i-  d_i \Delta z_i =  \Delta z_{i-1} &\mathrm{in}\ (0,T)\times\Omega,& 3 \leq i \leq n,\\
\frac{\partial z_i}{\partial \nu} = 0 &\mathrm{on}\ (0,T)\times\partial\Omega,& 1 \leq i \leq n,\\
 z_i(0,.)=z_{i,0}  &\mathrm{in}\ \Omega,& 1 \leq i \leq n,
\end{array}
\right.
\]
satisfies $z_i(T,.)=0$ for every $1 \leq i \leq n$.
\end{theo}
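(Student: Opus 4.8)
The plan is to follow the Lebeau--Robbiano strategy, adapted to the cascade cross-diffusion structure, by decoupling the free dynamics on the eigenbasis of the Neumann Laplacian and then reconstructing a genuine localized control through the spectral inequality of Jerison, Lebeau and Robbiano. First I would pass to the dual problem: by the Hilbert Uniqueness Method, the announced null-controllability in $L^2(\Omega)\times L_\sigma^2(\Omega)^{n-1}$ is equivalent to an observability inequality of the form $\norme{\varphi(0,\cdot)}^2 \lesssim \int_0^T\int_\omega\abs{\varphi_1}^2$ for the adjoint system, the observation bearing only on the first component over $\omega$ since $h$ acts on the first equation alone. The adjoint is again a cascade: $\varphi_1$ is coupled to $\varphi_2$ through the zero-order term $a_{2,1}$, and for $3\le i\le n$ the component $\varphi_{i-1}$ is coupled to $\varphi_i$ through the second-order term $\Delta\varphi_i$, the $L_\sigma^2$ constraints reflecting the dual uncontrollable directions.

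Denoting by $(\lambda_k,w_k)_{k\ge0}$ the eigenpairs of $-\Delta$ with Neumann conditions, with $0=\lambda_0<\lambda_1\le\cdots$ and $w_0$ constant, the projection $\Phi^k=(\varphi_i^k)_{1\le i\le n}$ onto $w_k$ solves a finite-dimensional linear system whose coupling chain has subdiagonal entries $a_{2,1}$ and $-\lambda_k$. I would treat the block $k=0$ first: since $\lambda_0=0$, the components $\varphi_i^0$ with $i\ge3$ decouple and become unobservable, which is exactly dual to the conserved means $\int_\Omega z_i$ ($i\ge3$) that force the orthogonality constraints; on the remaining block the Kalman rank condition holds precisely because $a_{2,1}\ne0$, which yields observability of that block. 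For $k\ge1$ I would prove a precise block observability inequality $\norme{\Phi^k(0)}^2\le C_{\mathrm{obs}}(\lambda_k)\int_0^\tau\abs{\varphi_1^k}^2$, recovering $\varphi_2^k$ from $\varphi_1^k$ and its dynamics (using $a_{2,1}\ne0$), then $\varphi_3^k$ from $\varphi_2^k$ (using $\lambda_k\ne0$), and so on up the chain, while tracking explicitly how $C_{\mathrm{obs}}$ depends on $\lambda_k$ and on the length $\tau$ of the time window.

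Finally I would aggregate the blocks: applying the Jerison--Lebeau--Robbiano spectral inequality to $\varphi_1(t,\cdot)=\sum_{\lambda_k\le\Lambda}\varphi_1^k(t)w_k$ at each time converts the sum of modal observations $\int_0^\tau\sum_{\lambda_k\le\Lambda}\abs{\varphi_1^k}^2$ into the genuine observation $\int_0^\tau\int_\omega\abs{\varphi_1}^2$ at the cost of a factor $e^{C\sqrt\Lambda}$, so that the low-frequency part is observable with cost $\big(\max_{\lambda_k\le\Lambda}C_{\mathrm{obs}}(\lambda_k)\big)\,e^{C\sqrt\Lambda}$. The usual Lebeau--Robbiano iteration---alternating control phases that annihilate the modes with $\lambda_k\le\Lambda_p$ with passive phases during which the higher modes dissipate at rate $e^{-\lambda\tau}$, along a sequence $\Lambda_p\to\infty$---then produces $h\in L^2(Q_T)$ steering the state to zero at time $T$.

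I expect the main obstacle to be the precise block estimate for $k\ge1$. In contrast with the zero-order cascade of \Cref{TheoCascade}, the couplings here are the second-order terms $-\lambda_k$, which grow with the frequency; moreover the recovery of $\varphi_{i+1}^k$ from $\varphi_i^k$ proceeds through the dynamics, so it must be carried out as an $L^2$-in-time (controllability Gramian) estimate rather than by naive pointwise differentiation. One must therefore quantify the growth of $C_{\mathrm{obs}}(\lambda_k)$ in $\lambda_k$ and check that it is dominated by the factor $e^{C\sqrt\Lambda}$ and by the dissipation, so that the iteration still closes; keeping the joint dependence on $\lambda_k$ and $\tau$ sharp enough is the technical heart of the argument.
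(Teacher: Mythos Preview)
Your proposal is correct and follows essentially the same route as the paper: the result is obtained as the case $m=1$ of \Cref{theolinear}, whose proof combines the Jerison--Lebeau--Robbiano spectral inequality with a precise block observability estimate $\norme{\varphi_k(0)}^2\le C(1+\tau^{-p_1}+\lambda_k^{p_2})\int_0^\tau|\varphi_{k,1}|^2$ (\Cref{lemmaCobs}) and then runs the standard Lebeau--Robbiano iteration. The only cosmetic difference is that the paper derives the block estimate by constructing controls for the dual finite-dimensional system via a Brunovsky-type cascade argument (Appendix~\ref{ProofObsDimfinie}) rather than by the direct recovery on the adjoint you outline, but the two are equivalent by HUM and yield the same polynomial growth in $\lambda_k$ that the iteration requires.
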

\Cref{TheoCascadeDeux} is a particular case of \Cref{theolinear} with $m=1$ (see \Cref{SectionTheoLinear} below).
\begin{rmk}\label{RMKCarl}
The proof strategies of \Cref{TheoCascade} and \Cref{Guerrero} rely on global Carleman estimates. By combining these methods to obtain \Cref{TheoCascadeDeux}, we are facing the same difficulty as appearing in \cite{FCGBT}, i.e., we can only treat the case of $n\times n$ systems with $n \leq 5$. Inspired by the recent work \cite{LiZu} by Pierre Lissy and Enrique Zuazua based on the Lebeau-Robbiano method, we prove \Cref{TheoCascadeDeux}.
\end{rmk}
\begin{rmk}\label{RMKDdiag}
Note that the diffusion matrix \tiny $\begin{pmatrix}
d_{1} & 0 & \dots & \dots & 0\\
1 & d_{2} &\ddots &\ddots  &\vdots\\
0 & \ddots & \ddots & \ddots & \vdots\\
\vdots & \ddots & \ddots & \ddots & 0\\
0 & \dots &  0 &1 & d_{n}
\end{pmatrix}$ \normalsize is diagonalizable if and only if $d_i \neq d_j$ for every $i \neq j$. In this case, the Kalman condition \cite[Theorem 5.3]{AKBDGB3} (that can be easily extended to Neumann boundary conditions instead of Dirichlet boundary conditions) yields null-controllability for initial data in $L_{\sigma}^2(\Omega)^{n}$ which is smaller that $L^2(\Omega) \times L_{\sigma}^2(\Omega)^{n-1}$.
\end{rmk}
For a recent survey on the null-controllability of linear parabolic systems, see \cite{AKBGBT} and references therein.
\subsubsection{Semilinear results}
Let $(d_i) \in (0,+\infty)^n$, $(f_i)_{1 \leq i \leq n} \in C^{\infty}(\R^n;\R)^n$, satisfying $f_i(0) = 0$ for every $1 \leq i \leq n$. For semilinear parabolic systems 
\begin{equation}
\label{Semilinear}
\tag{NL}
\forall 1 \leq i \leq n,\ \left\{
\begin{array}{l l}
\partial_t z_i - d_i \Delta z_{i} =f_i((z_j)_{1 \leq j \leq n}) + h 1_{\omega} 1_{i \in \{1\}} &\mathrm{in}\ (0,T)\times\Omega,\\
\frac{\partial z_i}{\partial \nu} = 0 &\mathrm{on}\ (0,T)\times\partial\Omega,\\
z_i(0,.)=z_{i,0} &\mathrm{in}\  \Omega,
\end{array}
\right.
\end{equation}
the usual strategy consists in deducing a \textit{local null-controllability result} for \eqref{Semilinear} from a (global) null-controllability result for the \textit{linearized system} around $((\overline{z_i})_{1 \leq i \leq n}, \overline{h})=(0,0)$
\begin{equation}
\label{SemilinearLin}
\tag{L}
\forall 1 \leq i \leq n,\ \left\{
\begin{array}{l l}
\partial_t z_i - d_i \Delta z_{i} =\sum\limits_{j=1}^n \partial_j f_i(0) z_j + h_i 1_{\omega} 1_{i \in \{1\}}&\mathrm{in}\ (0,T)\times\Omega,\\
\frac{\partial z_i}{\partial \nu} = 0 &\mathrm{on}\ (0,T)\times\partial\Omega,\\
z_i(0,.)=z_{i,0} &\mathrm{in}\  \Omega.
\end{array}
\right.
\end{equation}
In this article, we use the powerful \textit{source term method}, introduced by Yuning Liu, Takéo Takahashi and Marius Tucsnak in \cite{LTT}. This method enables to prove null-controllability in time $T$ for
\begin{align}
\label{SemilinearLinSource}
\tag{L+S}
& \notag\left\{
\begin{array}{l l}
\forall 1 \leq i \leq n\\
\partial_t z_i - d_i \Delta z_{i} =\sum\limits_{j=1}^n \partial_j f_i(0) z_j + h_i 1_{\omega} 1_{i \in \{1\}} + S_i(t,x) &\mathrm{in}\ (0,T)\times\Omega,\\
\frac{\partial z_i}{\partial \nu} = 0 &\mathrm{on}\ (0,T)\times\partial\Omega,\\
z_i(0,.)=z_{i,0} &\mathrm{in}\  \Omega,
\end{array}
\right.
\end{align}
where $S_i$ ($1 \leq i \leq n$) has a prescribed decay rate at $t =T$, depending on the cost of null-controllability for \eqref{SemilinearLin}. Then, for $(z_{i,0})_{1 \leq i \leq n}$ sufficiently small (in an appropriate norm), a fixed-point strategy in suitable spaces is applied to the map: $$\mathcal{N} : (S_i)_{1 \leq i \leq n} \mapsto \left(f_i((z_j)_{1 \leq j \leq n})-\sum\limits_{j=1}^n\partial_j f_i(0)\right)_{1 \leq i \leq n},$$ where $(z_i)_{1 \leq i \leq n}$ is the solution associated to the optimal control (in an appropriate norm) of \eqref{SemilinearLinSource}. Consequently, the local null-controllability for \eqref{Semilinear} comes from the null-controllability of \textbf{only one} linear system \eqref{SemilinearLin}.\\
\indent In this article, we adapt the source term method in a $L^{\infty}$-context in the following way.
\begin{itemize}
\item The source term method in $L^2$ enables to prove a strong observability inequality (see \Cref{CorStrongObs}). This estimate looks like a global Carleman estimate (see for example \cite[Lemma 1.3]{FCG}), whereas the method to get it is very different.
\item By using the Penalized Hilbert Uniqueness Method, introduced by Viorel Barbu in \cite{B}, we construct $L^{\infty}$-controls (see \Cref{theolinearLinfty}).
\item We use another time the source term method in $L^{\infty}$ (see \Cref{ControlPropEstiLTTLinfty}).
\item We conclude by an appropriate inverse mapping theorem (see \Cref{Fixed point}).
\end{itemize}
For other results using the source term method, see for instance \cite{BM}, \cite{FCLdM} and \cite{MiTa}.\\
\indent Another strategy to get local controllability result for \eqref{Semilinear}, called the \textit{Small $L^{\infty}$-perturbations method}  is used in \cite{AKBD}, \cite{B}, \cite{LB}, \cite{LCMML} and \cite{WZ}. This method requires the null-controllability of a family of linear parabolic systems. Thus, this type of result is proved by using global Carleman estimates that enable to treat parabolic operators as $\partial_t z  - \Delta z  + a(t,x) z $ with $a \in L^{\infty}(Q_T)$.\\
\indent Nevertheless, for the linearized system around $((\overline{z_i})_{1 \leq i \leq n}, \overline{h})=(0,0)$ of \Cref{SectionLinearization} (see below), a technical difficulty appears when we want to prove an observability inequality for the adjoint system by using global Carleman estimates when $n>4$ (see \Cref{RMKCarl}).\\
\indent For degenerate cases (see \Cref{SectionDegen}), i.e., when \eqref{SemilinearLin} is not null-controllable, one can try to perform the \textit{return method}, introduced by Jean-Michel Coron in \cite{C2} (see also \cite[Chapter 6]{C}). This method consists in finding a reference trajectory $((\overline{z_i})_{1 \leq i \leq n}, \overline{h})$ verifying $z_{i}(0,.)=z_{i}(T,.)=0$ ($1 \leq i \leq n$) of \eqref{Semilinear} such that the linearized system of \eqref{Semilinear} around $((\overline{z_i})_{1 \leq i \leq n}, \overline{h})$ is null-controllable. By using the small $L^{\infty}$-perturbations method, we can obtain a local null-controllability result for \eqref{Semilinear}. See for instance \cite{CGR}, \cite{CGMR}, \cite{CG} and \cite{LB}.\\
\indent Let us also mention the new method of \cite{LB2} to prove the global null-controllability of reaction-diffusion systems of two species with only one control force by constructing controls of the heat equation behaving as odd regular functions.

\section{Definition and general properties of the trajectories}
In this section, we introduce the concept of \textit{trajectory} of \eqref{systNLC} which requires a well-posedness result (see \Cref{deftraj}).
\subsection{Usual notations}
\indent Let $k, l \in \N^*$. We denote by $\mathcal{M}_k(\R)$ (respectively $\mathcal{M}_{k,l}(\R)$) the algebra of matrices with $k$ lines and $k$ columns (respectively the algebra of matrices with $k$ lines and $l$ columns with entries in $\R$. The matrix $A^{\text{tr}} \in \mathcal{M}_{l,k}(\R)$ denotes the transpose of the matrix $A \in \mathcal{M}_{k,l}(\R)$. For $M \in \mathcal{M}_{k}(\R)$, $Sp(M)$ is the set of complex eigenvalues of $M$:
$Sp(M) := \{\lambda \in \C;\ \exists X \in \C^k\setminus\{0\},\ MX = \lambda X\}$.\\
\indent For $\tau > 0$, we introduce $$Q_{\tau} := (0,\tau)\times\Omega.$$
\indent For every $(a_1, \dots, a_n) \in \R^n$, we define 
\begin{align}
\label{deffi}
\forall 1 \leq i \leq n, \ f_i(a_1, \dots, a_n) &:=  (\beta_i-\alpha_i)\left(\prod\limits_{k=1}^n a_k^{\alpha_k} - \prod\limits_{k=1}^n a_k^{\beta_k}\right),\\
\label{defF}F(a_1, \dots, a_n) &:= (f_i(a_1, \dots, a_n))_{1 \leq i \leq n}^{\text{tr}}.
\end{align}
We introduce 
\begin{align}
\label{defU}U := (u_1, \dots, u_n)^{\text{tr}},\quad U^{*} := (u_1^{*}, \dots, u_n^{*})^{\text{tr}}.
\end{align}
\indent Up to a renumbering of $(u_i)_{1 \leq i \leq n}$, we can assume that $J = \{1, \dots, m\}$ where $J$ is defined in \eqref{DefJ}. Hence, we define \begin{equation}
\label{defHJ}H^{J} := (h_1 , \dots, h_{m} , 0, \dots, 0)^{\text{tr}}.
\end{equation}
\indent We must be careful with the dependence on the constants appearing in the estimates with respect to $T$ (when $T$ is small). That is why, from now and until the end of the article, we assume that 
\begin{equation}
T \in (0,1).
\label{Tsmall}
\end{equation}
Unless otherwise specified, we denote by $C$ various positive constants varying from line to line.
\subsection{Well-posedness results and definition of a trajectory}
We define the function space
\begin{equation}
W_T := L^2(0,T;H^1(\Omega)) \cap H^1(0,T;(H^1(\Omega))'),
\end{equation}
that satisfies the continuous embedding
\begin{equation}
\label{injclassique}
W_T \hookrightarrow C([0,T];L^2(\Omega)).
\end{equation}
The following result introduces the notion of solution for linear parabolic systems and provides estimates in terms of the initial data and the source term.
\begin{defprop}\label{wpl2linfty}
Let $k \in \N^*$, $D\in \mathcal{M}_{k}(\R)$ a diagonalizable matrix such that $ Sp(D) \subset (0,+\infty)$, $A\in \mathcal{M}_{k}(\R)$, $U_0 \in L^2(\Omega)^k$, $S \in L^{2}(Q_T)^k$. The following Cauchy problem admits a unique weak solution $U \in W_T^k $
\begin{equation}
\label{systLinearGeneral}
\left\{
\begin{array}{l l}
\partial_t U - D \Delta U= AU + S&\mathrm{in}\ (0,T)\times\Omega,\\
\frac{\partial U}{\partial \nu} = 0 &\mathrm{on}\ (0,T)\times\partial\Omega,\\
U(0,.)=U_0 &\mathrm{in}\  \Omega.
\end{array}
\right.
\end{equation}
This means that $U$ is the unique function in $W_T^k$ that satisfies the variational formulation
\begin{align}
&\forall V \in L^2(0,T;H^1(\Omega)^k),\\ &\int_0^T (\partial_t U ,V)_{(H^1(\Omega)^k)',H^1(\Omega)^k)} + \int_{Q_T} D \nabla U .  \nabla V = \int_{Q_T} (AU + S) . V\notag,
\label{formvar}
\end{align}
and
\begin{equation}
U(0,.) = U_0 \ \mathrm{in}\ L^2(\Omega)^k.
\label{condinitL2}
\end{equation}
Moreover, there exists $C>0$ independent of $U_0$ and $S$ such that
\begin{equation}
\norme{U}_{W_T^k} \leq C \left(\norme{U_0}_{L^{2}(\Omega)^k}+\norme{S}_{L^{2}(Q_T)^k}\right).
\label{estl2faible}
\end{equation}
Finally, if $U_0 \in L^{\infty}(\Omega)^k$ and $S \in L^{\infty}(Q_T)^k$, then $U \in L^{\infty}(Q_T)^k$ and there exists $  C >0$ independent of $U_0$ and $S$ such that
\begin{equation}
\norme{U}_{L^{\infty}(Q_T)^k} \leq C \left(\norme{U_0}_{L^{\infty}(\Omega)^k}+\norme{S}_{L^{\infty}(Q_T)^k}\right).
\label{estl2faiblelinfty}
\end{equation}
\end{defprop}
The proof of \Cref{wpl2linfty} can be found in \cite[Proposition 2.3]{LB}.\\
\indent The following result introduces the notion of trajectory associated to the nonlinear system \eqref{systNLC} (see \Cref{SectionControlOpQuestion}).
\begin{defprop}\label{deftraj}
Let $D = \text{diag}(d_1, \dots, d_n)$ with $d_i \in (0,+\infty)$. For every $U_0 \in L^{\infty}(\Omega)^n$, $(U,H^J)$ (see \eqref{defU} and \eqref{defHJ}) is a \textbf{trajectory} of \eqref{systNLC} if 
\begin{enumerate}[nosep]
\item $(U,H^J) \in \Big(W_T \cap L^{\infty}(Q_T)\Big)^n\times L^{\infty}(Q_T)^m $,
\item $U$ is the (unique) solution of \eqref{systNLC}. This means that $U$ is the unique function in $\Big(W_T \cap L^{\infty}(Q_T)\Big)^n$ which satisfies
\begin{align}
\label{formvarNLC}&\forall V \in L^2(0,T;H^1(\Omega)^k),\\
 &\int_0^T (\partial_t U ,V)_{(H^1(\Omega)^k)',H^1(\Omega)^k)} + \int_{Q_T} D \nabla U .  \nabla V = \int_{Q_T} \left(F\left(U\right) +  H^J 1_{\omega}\right). V,
\notag
\end{align}
with $F$ defined in \eqref{defF} and
\begin{equation}
U(0,.) = U_0 \ \mathrm{in}\ L^{\infty}(\Omega)^k.
\label{condinitLinfty}
\end{equation}
\end{enumerate}
Moreover, $(U,H^J)$ is a \textbf{trajectory} of \eqref{systNLC} \textbf{reaching} $U^{*}$ (see \eqref{defU}) in time $T$ if
\[U(T,.)=U^*.\]
\end{defprop}
The proof of the uniqueness of \Cref{deftraj} comes from the fact that $F$ is locally Lipschitz on $\R^{n}$ (see the proof of \cite[Definition-Proposition 2.4]{LB2}).
\subsection{Invariant quantities of the nonlinear dynamics}
In this section, we show that in the system \eqref{systNLC} (see \Cref{SectionControlOpQuestion}), some quantities are invariant. They impose some restrictions on the initial condition, for the controllability results.
\begin{prop}\label{PropInvQuant}
Let $U_0 \in L^{\infty}(\Omega)^n$ and let $(U,H^J)$ be a trajectory of \eqref{systNLC} reaching $U^*$ in time $T$. Then, we have for every $k \neq l \in \{m+1, \dots, n\}$, $t \in [0,T]$,  
\begin{equation}
\int_{\Omega} \frac{u_{k}(t,x) - u_{k}^{*}}{\beta_k-\alpha_k}dx= \int_{\Omega} \frac{u_{l}(t,x) - u_{l}^{*}}{\beta_l-\alpha_l}dx,
\label{ci1t}
\end{equation}
\begin{equation}
\Big(d_k =d_l\Big) \Rightarrow \left(\frac{u_{k}(t,.) - u_{k}^{*}}{\beta_k-\alpha_k}= \frac{u_{l}(t,.) - u_{l}^{*}}{\beta_l-\alpha_l}\right).
\label{lienciedt}
\end{equation}
In particular, for every $k \neq l \in \{m+1, \dots, n\}$,
\begin{equation}
\int_{\Omega} \frac{u_{k,0}(x) - u_{k}^{*}}{\beta_k-\alpha_k}dx= \int_{\Omega} \frac{u_{l,0}(x) - u_{l}^{*}}{\beta_l-\alpha_l}dx,
\label{ci1}
\end{equation}
\begin{equation}
\Big(d_k =d_l\Big) \Rightarrow \left(\frac{u_{k,0} - u_{k}^{*}}{\beta_k-\alpha_k}= \frac{u_{l,0}- u_{l}^{*}}{\beta_l-\alpha_l}\right).
\label{liencied}
\end{equation}
\end{prop}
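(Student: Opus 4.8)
My proof would hinge on the \emph{factorized structure} of the reaction terms. For an uncontrolled index $i \in \{m+1, \dots, n\}$ we have $1_{i \in J} = 0$, so by \eqref{deffi} the $i$-th equation of \eqref{systNLC} reads $\partial_t u_i - d_i \Delta u_i = (\beta_i - \alpha_i)\, g$, where $g := \prod_{k=1}^n u_k^{\alpha_k} - \prod_{k=1}^n u_k^{\beta_k}$ is the \emph{common} reaction rate; since $U \in L^\infty(Q_T)^n$ and the exponents are nonnegative integers, $g \in L^\infty(Q_T)$. Because $u_i^*$ is a constant (hence $\nabla u_i^* = 0$, $\partial_t u_i^* = 0$) satisfying \eqref{Stat}, and because $\beta_i - \alpha_i \neq 0$, the rescaled unknown $v_i := (u_i - u_i^*)/(\beta_i - \alpha_i)$ solves, in the weak sense obtained by selecting the $i$-th component in the vector variational identity \eqref{formvarNLC}, the scalar problem $\partial_t v_i - d_i \Delta v_i = g$ in $Q_T$ with homogeneous Neumann condition. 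The decisive point is that the source $g$ is the same for every uncontrolled $i$.

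To obtain \eqref{ci1t} I would test this weak formulation against the constant function $1 \in H^1(\Omega)$: the diffusion term vanishes thanks to $\nabla 1 = 0$, leaving $\tfrac{d}{dt}\int_\Omega v_i(t,x)\,dx = \int_\Omega g(t,x)\,dx$ for a.e. $t$, where $t \mapsto \int_\Omega v_i$ is absolutely continuous since $v_i \in W_T$. As the right-hand side does not depend on $i$, the map $t \mapsto \int_\Omega (v_k - v_l)(t,x)\,dx$ has vanishing derivative, hence is constant on $[0,T]$. Evaluating at $t = T$ and using that the trajectory reaches $U^*$, i.e. $u_i(T,.) = u_i^*$ so that $v_i(T,.) = 0$, forces this constant to be $0$. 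This is exactly \eqref{ci1t}, and \eqref{ci1} follows by taking $t = 0$.

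For \eqref{lienciedt} I would assume $d_k = d_l =: d$ and subtract the weak formulations for $v_k$ and $v_l$; the identical sources $g$ cancel, so $w := v_k - v_l \in W_T$ is a weak solution of the homogeneous heat equation $\partial_t w - d\,\Delta w = 0$ with homogeneous Neumann condition and, by the reaching condition, final datum $w(T,.) = 0$ (meaningful in $L^2(\Omega)$ via the embedding \eqref{injclassique}). The conclusion then reduces to backward uniqueness for the Neumann heat semigroup: letting $(e_j)_{j \geq 0}$ be an orthonormal basis of eigenfunctions of $-\Delta$ with Neumann conditions and eigenvalues $\lambda_j \geq 0$, testing against $e_j$ shows that $c_j(t) := \int_\Omega w(t,x) e_j(x)\,dx$ solves $c_j'(t) = -d\lambda_j\, c_j(t)$, whence $c_j(t) = c_j(T)\, e^{d\lambda_j(T-t)}$; since $c_j(T) = 0$ for all $j$, we get $w \equiv 0$. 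Thus $v_k(t,.) = v_l(t,.)$ for every $t$, giving \eqref{lienciedt}, and \eqref{liencied} follows at $t = 0$. The only non-elementary ingredient is this backward uniqueness step (note that even the mode $\lambda_0 = 0$ causes no trouble, since its exponential factor is nonzero); the main care elsewhere lies simply in justifying that $v_i$ solves the scalar weak problem with source $g$ by extracting one component of \eqref{formvarNLC} and that the constant and eigenfunction test functions are admissible, rather than in any delicate estimate.
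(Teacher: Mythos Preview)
Your proof is correct and follows essentially the same approach as the paper: you exploit the common reaction rate $g$ (the paper's $R$) to derive the scalar equation for each uncontrolled component, test against the constant $1$ to obtain \eqref{ci1t}, and then use backward uniqueness for the homogeneous Neumann heat equation to obtain \eqref{lienciedt}. The only cosmetic difference is that the paper cites an external reference for backward uniqueness, whereas you supply a short self-contained spectral argument; structurally the two proofs are the same.
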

The proof of \Cref{PropInvQuant} is done in \Cref{ProofPropinvquant}. We prove \eqref{ci1t} by integrating with respect to the space variable an appropriate linear combination of equations of \eqref{systNLC} and by using the Neumann boundary conditions. We prove \eqref{lienciedt} by the backward uniqueness of the heat equation applied to an appropriate linear combination of equations of \eqref{systNLC}.\\
\indent The equation \eqref{lienciedt} implies that we can reduce the number of components of $(u_i)_{1 \leq i \leq n}$ of \eqref{systNLC} when some diffusion coefficients $d_i$ are equal for $m+1 \leq i \leq n$. This simplify the study, thus in order to treat the more difficult case, we make the following hypothesis.
\begin{ass}
\label{Assdkneqdl}
For every $k \neq l \in \{m+1, \dots, n\}$, $d_k \neq d_l$. 
\end{ass}
\begin{rmk}
\label{MassCondeq}
It will be interesting to note that the mass condition \eqref{ci1} is equivalent to
\begin{equation}
\label{ci1Eq}
\forall k \geq m+2,\ \int_{\Omega} \frac{u_{k,0}(x) - u_k^{*}}{\beta_k - \alpha_k} dx = \int_{\Omega} \frac{u_{m+1,0}(x) - u_{m+1}^{*}}{\beta_{m+1} - \alpha_{m+1}} dx.
\end{equation}
\end{rmk}
\section{An adequate change of variables and linearization}
\subsection{Change of variables - Cross diffusion system}
\label{sectionChangeVar}
The goal of this section is to transform the controlled system \eqref{systNLC} (see \Cref{SectionControlOpQuestion}) satisfied by $U$ into another system of \textit{cascade} type for which we better understand the controllability properties. Roughly speaking, for $1 \leq i \leq m$, the component $u_i$ is easy to control thanks to the localized control term $h_i 1_{\omega}$. Thus, the challenge is to understand how the reaction term $f_i(U)$ (see \eqref{deffi}) acts on the component $u_i$ for $m+1 \leq i \leq n$.\\
\indent We multiply the $(m+1)$-th equation of \eqref{systNLC} by $1/((\beta_{m+1}-\alpha_{m+1})(d_{m+1}-d_{m+2}))$ and the $(m+2)$-th equation of \eqref{systNLC} by $1/((\beta_{m+2}-\alpha_{m+2})(d_{m+2}-d_{m+1}))$ and we sum:
$$ \partial_t v_{m+2} - d_{m+2} \Delta v_{m+2} = \frac{1}{\beta_{m+1}-\alpha_{m+1}}\Delta u_{m+1},$$
where 
$$ v_{m+2} = \frac{1}{(\beta_{m+1}-\alpha_{m+1})(d_{m+1}-d_{m+2})} u_{m+1} + \frac{1}{(\beta_{m+2}-\alpha_{m+2})(d_{m+2}-d_{m+1})} u_{m+2}.$$
Roughly speaking, this linear combination enables to “kill” the reaction-term and to create a coupling term of second order.\\
\indent By iterating this strategy, we construct a linear transformation $V=PU$ such that $u_{m+1}$ acts on $v_{m+2}$, $v_{m+2}$ acts on $v_{m+3}$, ..., $v_{n-1}$ acts on $v_{n}$ through cross diffusion terms. Moreover, we transform the problem of controllability for $U$ to $U^*$ into a null-controllability problem for 
\begin{equation}
Z:=P(U-U^{*}),
\label{defz}
\end{equation} 
where $P$ is the invertible triangular matrix defined by:
\begin{equation}\label{matrixchangvar}
P := 
\left(
\begin{array}{c |c}
I_{m} & (0)\\
\hline
(0) & *
\end{array}
\right),
\end{equation}
with
\begin{equation}
\label{DefcoeffP}
\forall k, l \geq m+1,\ P_{kl} :=
\renewcommand*{\arraystretch}{1.5}
\left\{
\begin{array}{c l}
\left((\beta_l - \alpha_l)\prod\limits_{\substack{m+1 \leq r \leq k \\ r \neq l}}(d_l-d_r)\right)^{-1} & \mathrm{if}\  k \geq l,\\
0 & \mathrm{if}\  k <l,
\end{array}
\right.
\end{equation}
with the convention $\prod\limits_{\emptyset} = 1$.
\begin{prop}\label{propchangevarZ}
The couple $(U,H^J)$ is a trajectory of \eqref{systNLC} if and only if $(Z,H^J)$ satisfies
\small
\begin{align}
\left\{
\begin{array}{l l l}
\normalsize \partial_t z_i - d_i \Delta z_{i} = f_i(P^{-1}Z+U^{*}) + h_i 1_{\omega} &\mathrm{in}\ (0,T)\times\Omega, &\small{1 \leq i \leq m}\\
\partial_t z_{m+1} - d_{m+1} \Delta z_{m+1} = \frac{f_{m+1}(P^{-1}Z+U^{*})}{\beta_{m+1}-\alpha_{m+1}} &\mathrm{in}\ (0,T)\times\Omega,&\\
\normalsize \partial_t z_i - d_i \Delta z_{i} = \Delta z_{i-1}&\mathrm{in}\ (0,T)\times\Omega,& \small{m+2 \leq i \leq n}, \\
\frac{\partial Z}{\partial \nu} = 0 &\mathrm{on}\ (0,T)\times\partial\Omega,&\\
Z(0,.)=Z_0 &\mathrm{in}\  \Omega,&
\end{array}
\right.
\label{systNLCZ1}
\end{align}
\normalsize
\end{prop}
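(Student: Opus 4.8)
The plan is to treat the statement as a purely algebraic verification, since $Z=P(U-U^{*})$ is obtained from $U$ by a \emph{constant} invertible matrix and a constant shift. Because $P$ (see \eqref{matrixchangvar}--\eqref{DefcoeffP}) is invertible and $U^{*}$ is a constant vector, the map $U \mapsto P(U-U^{*})$ is a linear homeomorphism of $\left(W_T \cap L^{\infty}(Q_T)\right)^n$ onto itself which preserves the Neumann condition (as $\partial U^{*}/\partial\nu = 0$) and sends the initial datum $U_0$ to $Z_0:=P(U_0-U^{*})$. Hence the equivalence ``$(U,H^J)$ is a trajectory $\iff$ $(Z,H^J)$ solves \eqref{systNLCZ1}'' will follow once the interior equations are transformed correctly, the reverse implication being automatic by applying $P^{-1}$. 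So I would reduce everything to checking the three families of equations in \eqref{systNLCZ1}.

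The key structural remark is that every reaction term is a scalar multiple of one and the same function: writing $g(U):=\prod_{k=1}^n u_k^{\alpha_k}-\prod_{k=1}^n u_k^{\beta_k}$, one has $f_i(U)=(\beta_i-\alpha_i)g(U)$ for all $i$, and $g(U^{*})=0$ by \eqref{Stat}. For $1\leq i\leq m$ the block structure of $P$ gives $z_i=u_i-u_i^{*}$, and since $U^{*}$ is constant the first $m$ equations are immediate. For $i\geq m+1$ I would write $z_i=\sum_{l=m+1}^{i}P_{il}(u_l-u_l^{*})$, apply $\partial_t-d_i\Delta$, and substitute $\partial_t u_l = d_l\Delta u_l+(\beta_l-\alpha_l)g(U)$ from \eqref{systNLC}. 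This splits $\partial_t z_i-d_i\Delta z_i$ into a diffusion part $\sum_{l=m+1}^{i}P_{il}(d_l-d_i)\Delta u_l$ and a reaction part $g(U)\sum_{l=m+1}^{i}P_{il}(\beta_l-\alpha_l)$.

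The heart of the proof is then two elementary identities about $P$. First, from \eqref{DefcoeffP}, $P_{il}(\beta_l-\alpha_l)=\bigl(\prod_{m+1\leq r\leq i,\,r\neq l}(d_l-d_r)\bigr)^{-1}$, so the reaction coefficient equals the classical Lagrange/partial-fraction sum $\sum_{l=m+1}^{i}\bigl(\prod_{r\neq l}(d_l-d_r)\bigr)^{-1}$ over the $i-m$ distinct nodes $d_{m+1},\dots,d_i$ (distinctness is exactly \Cref{Assdkneqdl}). This sum equals $1$ when $i=m+1$ (one node, empty product) and vanishes as soon as $i\geq m+2$ (at least two nodes, by letting the interpolation variable tend to infinity). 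This explains at once the $(m+1)$-th equation, whose right-hand side is $g(U)=f_{m+1}(P^{-1}Z+U^{*})/(\beta_{m+1}-\alpha_{m+1})$, and the disappearance of the reaction term for $i\geq m+2$. Second, for the diffusion part, the factor $(d_l-d_i)$ cancels the $r=i$ factor in the product defining $P_{il}$, yielding the telescoping recurrence $P_{il}(d_l-d_i)=P_{i-1,l}$ for $m+1\leq l\leq i-1$, while the $l=i$ term vanishes because of the factor $d_i-d_i=0$. Hence the diffusion part collapses to $\sum_{l=m+1}^{i-1}P_{i-1,l}\Delta u_l=\Delta z_{i-1}$, which is precisely the desired second-order coupling.

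I expect the main obstacle to be bookkeeping rather than conceptual: correctly establishing the two identities—the vanishing of the partial-fraction sum for at least two nodes, and the recurrence $P_{il}(d_l-d_i)=P_{i-1,l}$—and verifying that the index $l=i$ term drops out, so that for $i\geq m+2$ a second-order coupling $\Delta z_{i-1}$ survives with \emph{no} residual reaction term. Once these are in place, reading the same computation backwards through $P^{-1}$ yields the converse implication and completes the equivalence.
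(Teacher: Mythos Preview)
Your proposal is correct and follows essentially the same route as the paper: the paper isolates the very same two identities you identify---the vanishing partial-fraction sum $\sum_{l=m+1}^{i}P_{il}(\beta_l-\alpha_l)=0$ for $i\geq m+2$ (proved there as \Cref{CorIdentity} via a partial-fraction lemma) and the recurrence $P_{il}(d_l-d_i)=P_{i-1,l}$---and uses them in the same computation of $\partial_t z_i-d_i\Delta z_i$. The only cosmetic difference is that the paper verifies the converse by an explicit strong induction on $i$ rather than invoking invertibility of the triangular transformation, but your argument that $P^{-1}$ unwinds the linear combinations is equally valid.
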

The proof of \Cref{propchangevarZ} is done in \Cref{Eq2systems}.\\
\indent We introduce the notations: for every $(a_1, \dots, a_n) \in \R^n$,
\begin{equation}
\label{defgi}
\forall 1 \leq i \leq m,\ g_i(a_1, \dots, a_n) := f_i(P^{-1}(a_1, \dots, a_n)^{\text{tr}}+U^*),
\end{equation}
\begin{equation}
\label{defgm+1}
g_{m+1}(a_1, \dots, a_n) := \frac{f_{m+1}(P^{-1}(a_1, \dots, a_n)^{\text{tr}}+U^*)}{\beta_{m+1}-\alpha_{m+1}},
\end{equation}
\begin{equation}
\label{defGgi}
G(a_1, \dots, a_n) := (g_1(a_1, \dots, a_n), \dots, g_{m+1}(a_1, \dots, a_n), 0 \dots, 0)^{\text{tr}},
\end{equation}
\begin{equation}
\label{defNewD}
D_J :=
\left(
\begin{array}{c |c}
diag(d_1, \dots, d_m) & (0)\\
\hline
(0) & D_{\sharp}
\end{array}
\right),\ 
D_{\sharp} := 
\begin{pmatrix}
d_{m+1} & 0 & \dots & \dots & 0\\
1 & d_{m+2} &\ddots &\ddots  &\vdots\\
0 & \ddots & \ddots & \ddots & \vdots\\
\vdots & \ddots & \ddots & \ddots & 0\\
0 & \dots &  0 &1 & d_{n}
\end{pmatrix}.
\end{equation}
With these notations, the nonlinear system \eqref{systNLCZ1} is
\begin{equation}
\label{defsystNLZ}
\tag{NL-Z}
\left\{
\begin{array}{l l}
 \partial_t Z - D_J \Delta Z = G(Z) + H^{J} 1_{\omega}&\mathrm{in}\ (0,T)\times\Omega,\\
\frac{\partial Z}{\partial \nu} = 0 &\mathrm{on}\ (0,T)\times\partial\Omega,\\
Z(0,.)=Z_{0} &\mathrm{in}\  \Omega.
\end{array}
\right.
\end{equation}
\subsection{Linearization}
\label{SectionLinearization}
We will work under the following hypothesis which will guarantee the null-controllability of this linearized system.
\begin{ass}
\label{asscouplordre0}
We assume that there exists $1 \leq j \leq m$ such that
\begin{align}
\label{couplordre0}
\partial_{j} g_{m+1}\Big(0, \dots, 0\Big)\neq 0.
\end{align}
By using \eqref{matrixchangvar} and \eqref{defgm+1}, we check that \eqref{couplordre0} is equivalent to
\begin{align}
\label{eqcouplordre0}
\partial_{j} f_{m+1}\Big(u_1^*, \dots, u_n^*\Big)\neq 0.
\end{align}
When $\alpha_j, \beta_j \geq 1$, a sufficient condition to ensure \eqref{eqcouplordre0} is
\begin{equation}
\label{GenericCondition}
\forall 1 \leq k \leq n,\ u_{k}^{*} \neq 0.
\end{equation}
Indeed, by using \eqref{deffi}, \eqref{Stat} and $\alpha_j \neq \beta_j$, if \eqref{GenericCondition} holds true then
\begin{align*}
\partial_{j} f_{m+1}\Big(u_1^*, \dots, u_n^*\Big) &= \alpha_j (u_j^{*})^{\alpha_j - 1} \prod\limits_{\substack{k=1 \\ k \neq j}}^n {u_k^{*}}^{\alpha_k} - \beta_j (u_j^{*})^{\beta_j - 1} \prod\limits_{\substack{k=1 \\ k \neq j}}^n {u_k^{*}}^{\beta_k}\\
& = \frac{\alpha_j}{u_j^{*}} \prod\limits_{k=1}^n {u_k^{*}}^{\alpha_k} - \frac{\beta_j}{u_j^{*}} \prod\limits_{k=1}^n {u_k^{*}}^{\beta_k}\\
& = \frac{\alpha_j-\beta_j}{u_j^{*}} \prod\limits_{k=1}^n {u_k^{*}}^{\alpha_k} \neq 0.
\end{align*}
Note that \eqref{GenericCondition} is not equivalent to \eqref{eqcouplordre0} as shown by the examples in \Cref{Appu1u3u2u4}.
\end{ass}
The linearized system of \eqref{defsystNLZ} satisfied by $Z$ around $(0,0)$ is
\begin{equation}
\label{defsystZ}
\tag{L-Z}
\left\{
\begin{array}{l l}
 \partial_t Z - D_J \Delta Z = A_J Z + H^{J} 1_{\omega}&\mathrm{in}\ (0,T)\times\Omega,\\
\frac{\partial Z}{\partial \nu} = 0 &\mathrm{on}\ (0,T)\times\partial\Omega,\\
Z(0,.)=Z_{0} &\mathrm{in}\  \Omega,
\end{array}
\right.
\end{equation}
where
\begin{equation}
\label{defcouplA}
A_{J} = (a_{ik})_{1 \leq i,k\leq n}, \qquad a_{ik} = \left\{
\begin{array}{c l}
\partial_k g_i(0, \dots, 0)& \mathrm{if}\  1 \leq i \leq m+1,\\
0 & \mathrm{if}\  m+2 \leq i \leq n.
\end{array}
\right.
\end{equation}
Up to a renumbering of the first $m$ equations, we can assume that $j=m$. Then, by \Cref{asscouplordre0}, we have
\begin{equation}
\label{CouplNonZeroA}
a_{m+1,m} \neq 0.
\end{equation}
Roughly speaking, we summarize the controllability properties established in the following diagram:
\begin{align*}
&h_1 \xrightarrow[]{controls} z_1,\ h_2 \xrightarrow[]{controls} z_2,\ \dots\ ,\ h_{m-1}\xrightarrow[]{controls} z_{m-1}, \\ 
&h_m \xrightarrow[]{controls} z_m  \xrightarrow[a_{m+1m} z_m]{controls}z_{m+1} \xrightarrow[{\Delta z_{m+1}}]{controls} z_{m+2}\xrightarrow[{\Delta z_{m+2}}]{controls} \dots  \xrightarrow[{\Delta z_{n-1}}]{controls} z_n.
\end{align*}

\section{Main results}

The goal of this section is to state the main results of the paper. First, we prove a local null-controllability result for the system \eqref{defsystNLZ} (see \Cref{sectionChangeVar}). Then, we deduce a local controllability result around $U^{*}$ for \eqref{systNLC} (see \Cref{SectionControlOpQuestion}).\\
\indent We have seen in \Cref{PropInvQuant} that a trajectory $(U, H^{J})$ reaching $U^{*}$ has to verify the condition \eqref{ci1t}. Thus, it prevents local-controllability from happening for arbitrary initial data. This is why we introduce a notion of local controllability adapted to \eqref{ci1}.\\
\indent Let $p \in [1,+\infty]$. We introduce the following subspace of $L^p(\Omega)^n$:
\begin{equation}
\label{defL_inv^2}
L_{inv}^p := \left\{ Z_0 \in L^p(\Omega)^n\ ;\ \forall m+2 \leq i \leq n,\ \int_{\Omega} z_{i,0}(x) dx = 0\right\}.
\end{equation}
\begin{theo}
\label{LocContrLinfty}
Under \Cref{Assdkneqdl} and \Cref{asscouplordre0}, the system \eqref{defsystNLZ} is locally null-controllable, i.e., there exists $r > 0$ such that for every $Z_0 \in L_{inv}^{\infty}$ verifying $\norme{Z_0}_{L^{\infty}(\Omega)^n} \leq r$, there exists $H^J \in L^{\infty}(Q_T)^m$ such that the solution $Z$ of \eqref{defsystNLZ} satisfies $Z(T,.)=0$.
\end{theo}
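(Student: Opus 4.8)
The plan is to realize \eqref{defsystNLZ} as a perturbation of its linearization \eqref{defsystZ} and to invert the linearized control-to-trajectory map by the source term method, following Liu--Takahashi--Tucsnak but carried out in an $L^{\infty}$-framework. First I would record the two structural facts that make the scheme work. Since $U^{*}$ is a stationary state, \eqref{Stat} gives $f_i(U^{*})=0$, hence $G(0)=0$ (see \eqref{defGgi}), and the affine part of the dynamics is exactly $A_J Z$; moreover $A_J=DG(0)$ because the last $n-m-1$ components of $G$ vanish identically. Consequently the genuinely nonlinear remainder
$$R(Z):=G(Z)-A_J Z$$
is the second-order Taylor remainder of $G$ at $0$, is supported on the first $m+1$ coordinates, and, $G$ being polynomial, satisfies a quadratic bound $\norme{R(Z_1)-R(Z_2)}\leq C(\norme{Z_1}+\norme{Z_2})\norme{Z_1-Z_2}$ on bounded sets. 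The vanishing of the last components of $R$ is crucial: it shows that $R(Z)$ never violates the invariance relations of \Cref{PropInvQuant}, so the equations indexed by $m+2,\dots,n$ keep the pure cross-diffusion form $\partial_t z_i-d_i\Delta z_i=\Delta z_{i-1}$, and one stays inside the controllable class $L_{inv}^{\infty}$ at every step.

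Next I would invoke the linear theory. Under \Cref{Assdkneqdl} and \Cref{asscouplordre0} --- which via \eqref{CouplNonZeroA} provide the cascade ignition $a_{m+1,m}\neq 0$ --- the linearized system \eqref{defsystZ} is null-controllable from $L_{inv}^{\infty}$ (this is \Cref{theolinear}), and, after upgrading the $L^2$ source term method to $L^{\infty}$ by the penalized Hilbert Uniqueness Method (\Cref{theolinearLinfty}), one obtains a bounded control operator sending a source term $S$ to an $L^{\infty}$-control that steers the corresponding linear trajectory to rest at time $T$. Concretely I would fix weight functions $\rho_{0},\rho_{S}$ with a prescribed decay at $t=T$ dictated by the null-controllability cost of \eqref{defsystZ}, and work in the weighted spaces
$$\mathcal{S}:=\set{S\ ;\ S/\rho_S\in L^{\infty}(Q_T)^n,\ S_i=0\ \text{for}\ i\geq m+2},\qquad \mathcal{Z}:=\set{Z\ ;\ Z/\rho_0\in L^{\infty}(Q_T)^n}.$$
The two rates must be compatible, i.e. chosen so that $\rho_0^{2}/\rho_S$ stays bounded near $t=T$: this is exactly what lets a quadratic nonlinearity be absorbed, since writing $Z=\rho_0\,(Z/\rho_0)$ gives $R(Z)/\rho_S=(\rho_0^{2}/\rho_S)\,\grandO{(Z/\rho_0)^2}$.

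With these spaces in hand I would define the map $\mathcal{N}:\mathcal{S}\to\mathcal{S}$ by $\mathcal{N}(S):=R(Z_S)$, where $Z_S\in\mathcal{Z}$ is the trajectory produced by the source-term control operator of \Cref{ControlPropEstiLTTLinfty} applied to $(Z_0,S)$; a fixed point $S=\mathcal{N}(S)$ then yields precisely a trajectory of \eqref{defsystNLZ} reaching $0$ with an $L^{\infty}(Q_T)^m$ control. Combining the boundedness of the control operator, $\norme{Z_S}_{\mathcal{Z}}\leq C(\norme{Z_0}_{L^{\infty}(\Omega)^n}+\norme{S}_{\mathcal{S}})$, with the quadratic estimate on $R$ and the weight compatibility $\rho_0^{2}/\rho_S\in L^{\infty}$, one obtains $\norme{\mathcal{N}(S)}_{\mathcal{S}}\leq C\norme{Z_S}_{\mathcal{Z}}^{2}$ together with a matching Lipschitz bound. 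Hence, for $\norme{Z_0}_{L^{\infty}(\Omega)^n}\leq r$ with $r$ small enough, $\mathcal{N}$ contracts a small ball of $\mathcal{S}$ into itself and a fixed point exists; equivalently, this is the inverse mapping theorem of \Cref{Fixed point} applied at the origin, where the derivative of the control-to-state map is the invertible linear solution operator of \eqref{defsystZ}. Since the last components of $R$ vanish, every iterate respects the constraint defining $L_{inv}^{\infty}$, so the linear controllability result applies throughout.

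The main obstacle I expect is not the fixed point itself but the $L^{\infty}$ machinery underpinning it: transferring the $L^2$ null-controllability of \eqref{defsystZ} --- itself obtained through the Jerison--Lebeau--Robbiano spectral inequality rather than global Carleman estimates, which only reach $n\leq 5$ (see \Cref{RMKCarl}) --- into an $L^{\infty}$-control operator with a quantitatively controlled cost, and then tuning the two weight rates so that the quadratic remainder closes the estimate near $t=T$. Everything else --- the polynomial regularity of $G$, the vanishing of its last components, and the automatic preservation of the invariance constraint --- is structural and falls out of the change of variables of \Cref{sectionChangeVar}.
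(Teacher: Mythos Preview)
Your proposal is correct and follows essentially the same route as the paper: both isolate the quadratic remainder $R(Z)=G(Z)-A_JZ$ (called $Q(Z)$ in \Cref{Fixed point}), exploit the weight compatibility $\rho_0^{2}/\rho_{\mathcal{S}}\in L^{\infty}$ from \eqref{rhorhoG} to absorb it in $\mathcal{S}_\infty$, and close via the $L^{\infty}$ source-term operator of \Cref{ControlPropEstiLTTLinfty}. The only cosmetic difference is that you phrase the final step as a Banach contraction on $S\mapsto R(Z_S)$ while the paper applies the inverse mapping theorem to $(Z,H^J)\mapsto(\partial_t Z-D_J\Delta Z-A_JZ-H^J1_\omega-Q(Z),\,Z(0,.))$; as you yourself note, these are equivalent formulations of the same argument.
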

We deduce from \Cref{LocContrLinfty} the following local controllability result.
\begin{theo}
\label{mainresult1}
Under \Cref{Assdkneqdl} and \Cref{asscouplordre0}, the system \eqref{systNLC} is locally controllable around $U^{*}$, i.e., there exists $r > 0$ such that for every $U_0 \in L^{\infty}$ satisfying the mass condition \eqref{ci1} and $\norme{U_0 - U^*}_{L^{\infty}(\Omega)} \leq r$, there exists $H^J \in L^{\infty}(Q_T)^m$ such that the solution $U$ of \eqref{systNLC} satisfies $U(T,.)=U^{*}$.
\end{theo}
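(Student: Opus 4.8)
**The plan is to deduce Theorem \ref{mainresult1} from Theorem \ref{LocContrLinfty} via the affine change of variables $Z = P(U - U^{*})$ introduced in \Cref{sectionChangeVar}.** The key linking device is \Cref{propchangevarZ}, which establishes that $(U, H^J)$ is a trajectory of \eqref{systNLC} if and only if $(Z, H^J)$ solves \eqref{defsystNLZ}. First I would verify that the smallness and invariance hypotheses transport correctly under this change of variables. Since $P$ (see \eqref{matrixchangvar}) is an invertible block-triangular matrix independent of space and time, the bound $\norme{U_0 - U^*}_{L^{\infty}(\Omega)} \leq r$ translates into $\norme{Z_0}_{L^{\infty}(\Omega)^n} \leq \norme{P} \, r =: r'$, so choosing $r$ small enough makes $r'$ smaller than the radius furnished by \Cref{LocContrLinfty}.

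\textbf{The second step is to check that the mass condition \eqref{ci1} on $U_0$ is exactly equivalent to the membership $Z_0 \in L^{\infty}_{inv}$.} By definition \eqref{defL_inv^2}, this membership means $\int_{\Omega} z_{i,0}\,dx = 0$ for every $m+2 \leq i \leq n$. Using $Z_0 = P(U_0 - U^*)$ together with the explicit triangular structure of $P$ from \eqref{DefcoeffP}, each $z_{i,0}$ is a specific linear combination of the quantities $(u_{k,0} - u_k^*)/(\beta_k - \alpha_k)$ for $m+1 \leq k \leq i$. The natural route is to show that the vanishing of $\int_\Omega z_{i,0}$ for all $i \geq m+2$ is equivalent to the chain of equalities recorded in \Cref{MassCondeq}, equation \eqref{ci1Eq}, which in turn is equivalent to the original mass condition \eqref{ci1}. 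This is the one genuinely computational point, but it is a finite linear-algebra verification exploiting the upper-triangular form of the block of $P$ indexed by $\{m+1, \dots, n\}$.

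\textbf{The final step is to run the equivalence in reverse.} Given $U_0$ satisfying \eqref{ci1} with $\norme{U_0 - U^*}_{L^{\infty}(\Omega)} \leq r$, set $Z_0 := P(U_0 - U^*)$; by the two preceding steps $Z_0 \in L^{\infty}_{inv}$ and $\norme{Z_0}_{L^{\infty}(\Omega)^n} \leq r'$. Applying \Cref{LocContrLinfty} produces a control $H^J \in L^{\infty}(Q_T)^m$ so that the solution $Z$ of \eqref{defsystNLZ} satisfies $Z(T,.) = 0$. Then \Cref{propchangevarZ} guarantees that the associated $U = P^{-1}Z + U^*$ is a trajectory of \eqref{systNLC} driven by the same $H^J$, and $Z(T,.) = 0$ forces $U(T,.) = U^*$, which is precisely \eqref{conditionfinale}.

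\textbf{I expect the main obstacle to be the bookkeeping in the second step}, namely confirming that $L^{\infty}_{inv}$ is the precise image under $P$ of the affine mass constraint, rather than merely a sufficient or necessary condition. The invariant quantities of \Cref{PropInvQuant} show the constraint \eqref{ci1} is unavoidable, so the dimensions must match; the delicate verification is that the particular row sums of $P$ conspire so that $\int_\Omega z_{i,0} = 0$ captures exactly these $n - m - 1$ independent mass relations and no spurious extra constraint. Once this identification is secured, the remaining implications are immediate from the already-established correspondence between the two systems, and no regularity or functional-analytic difficulty arises since $P$ acts as a fixed bounded isomorphism on every $L^p(\Omega)^n$.
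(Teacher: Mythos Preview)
Your proposal is correct and follows essentially the same route as the paper: the paper also reduces \Cref{mainresult1} to \Cref{LocContrLinfty} via \Cref{propchangevarZ} together with the equivalence \eqref{InvquantZU}, whose proof (your ``second step'') is carried out in \Cref{Proofinvquantzu} using the triangular structure \eqref{DefcoeffP} and the row-sum identity of \Cref{CorIdentity}. The only detail you left implicit is that the key algebraic fact making the mass conditions match is precisely \Cref{CorIdentity}, i.e., $\sum_{l=m+1}^{k} P_{kl}(\beta_l-\alpha_l)=0$ for $k\geq m+2$.
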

\Cref{LocContrLinfty} is equivalent to \Cref{mainresult1}. Indeed, it comes from \Cref{propchangevarZ} and the following equivalence
\begin{equation}
\label{InvquantZU}
Z_0 \in L_{inv}^{\infty} \Leftrightarrow U_0\ \text{satisfies}\ \eqref{ci1} \Leftrightarrow U_0\ \text{satisfies}\ \eqref{ci1Eq}\ (\text{\Cref{MassCondeq}}) .
\end{equation}
The proof of \eqref{InvquantZU} is done in \Cref{Proofinvquantzu}.
\begin{app}\label{Appu1u3u2u4}
For $n=4$, $\alpha_1 = \alpha_3 = \beta_2 = \beta_4 = 1$ and $\alpha_2 = \alpha_4 = \beta_1= 0$, we are studying the following system:
\small
\begin{equation}\label{systNLCu1u3u2u4}
\forall 1 \leq i \leq 4,\ \left\{
\begin{array}{l l}
\partial_t u_i - d_i \Delta u_{i} = (-1)^{i}(u_1 u_3 - u_2 u_4) + h_i 1_{\omega} 1_{i \in J} &\mathrm{in}\ (0,T)\times\Omega,\\
\frac{\partial u_i}{\partial \nu} = 0 &\mathrm{on}\ (0,T)\times\partial\Omega,\\
u_i(0,.)=u_{i,0} &\mathrm{in}\  \Omega.
\end{array}
\right.
\end{equation}
\normalsize
In this case, we check that \Cref{asscouplordre0} is
$$ \text{for}\ J= \{1,2,3\},\quad\Big(\exists j \in \{1,2,3\},\ \partial_j f_4(u_1^{*}, \dots, u_4^{*}) \neq 0\Big) \Leftrightarrow \Big((u_1^{*}, u_3^{*}, u_4^{*}) \neq (0,0,0)\Big),$$
$$\text{for}\ J=\{1,2\}, \quad  \Big(\exists j \in \{1,2\},\ \partial_j f_3(u_1^{*}, \dots, u_4^{*}) \neq 0\Big) \Leftrightarrow  \Big((u_3^{*}, u_4^{*}) \neq (0,0)\Big),$$
$$\text{for}\ J=\{1\}, \quad  \Big(\partial_1 f_2 (u_1^{*}, \dots, u_4^{*}) \neq 0\Big) \Leftrightarrow \Big(u_3^{*} \neq 0\Big).$$
Thus, \Cref{mainresult1} recovers the result of \cite[Theorem 3.2]{LB} except for the case $J= \{1,2,3\}$ and $(u_1^{*}, u_3^{*}, u_4^{*}) =(0,0,0)$ that the proof of the present article does not treat (see \Cref{RMArt1} for more details about the strategy of \cite{LB}).
\end{app}

\section{Linear null-controllability under constraints in $L^2$}\label{SectionTheoLinear}
The main result of this section, stated in the following theorem, is the null-controllability in $L_{inv}^2$ for the linear system \eqref{defsystZ} (see \Cref{SectionLinearization}).
\begin{theo}
\label{theolinear}
The system \eqref{defsystZ} is null-controllable in $L_{inv}^2$. More precisely, there exists $C>0$ such that for every $T>0$ and $Z_0 \in L_{inv}^2$, there exists a control $H^J \in L^2((0,T)\times \Omega)^m$ verifying
\begin{equation}
\label{esticontrolTh}
\norme{H^J}_{L^2(Q_T)^m} \leq C_T \norme{Z_0}_{L^2(\Omega)^n},\ \text{where}\ C_T =Ce^{C/T},
\end{equation}
and such that the solution $Z \in W_T^n$ of \eqref{defsystZ} satisfies $Z(T,.)=0$.
\end{theo}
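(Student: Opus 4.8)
The plan is to establish \Cref{theolinear} by the Lebeau--Robbiano strategy, in the spirit of \cite{LiZu}. Let $(\phi_k)_{k \geq 0}$ be an $L^2(\Omega)$-orthonormal basis of eigenfunctions of $-\Delta$ with homogeneous Neumann boundary conditions, associated with eigenvalues $0 = \lambda_0 < \lambda_1 \leq \lambda_2 \leq \dots \to +\infty$, so that $\phi_0$ is constant. For $\mu > 0$, set $E_\mu := \mathrm{span}\{\phi_k\ ;\ \lambda_k \leq \mu\}$ and let $\Pi_\mu$ denote the orthogonal projection onto $E_\mu$, acting componentwise on $\R^n$-valued functions. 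The first and decisive ingredient is the spectral inequality of Jerison, Lebeau and Robbiano \cite{JeLe}: there exist $C, c > 0$, depending only on $\Omega$ and $\omega$, such that $\norme{w}_{L^2(\Omega)} \leq C e^{c \sqrt{\mu}} \norme{w}_{L^2(\omega)}$ for every $\mu > 0$ and every $w \in E_\mu$.

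Writing $Z = \sum_{k} Z_k(t) \phi_k$ with $Z_k(t) \in \R^n$, the system \eqref{defsystZ} decouples, mode by mode, into the family of finite-dimensional control systems $Z_k'(t) = M_k Z_k(t) + B v_k(t)$, where $M_k := A_J - \lambda_k D_J$, $B := \begin{pmatrix} I_m \\ 0 \end{pmatrix} \in \mathcal{M}_{n,m}(\R)$, and $v_k(t) \in \R^m$ is the $k$-th Neumann--Fourier coefficient of the localized control. The crucial structural fact is the controllability of each pair $(M_k, B)$, which I would verify through the Kalman rank condition by following the cascade: the first $m$ components are directly actuated by $B$; component $m+1$ is reached through the zero-order coupling $a_{m+1,m} \neq 0$ (see \eqref{CouplNonZeroA}); and the components $m+2, \dots, n$ are reached successively through the cross-diffusion entries of $D_\sharp$, which produce the subdiagonal terms $-\lambda_k$ of $M_k$. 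For $\lambda_k > 0$ these terms are nonzero and the cascade closes, so $(M_k, B)$ is controllable. For the zero mode $\lambda_0 = 0$ the cross-diffusion terms vanish and the last components decouple; this is exactly compensated by the constraint $Z_0 \in L_{inv}^2$, which annihilates $\int_\Omega z_{i,0}$ for $i \geq m+2$, while $\frac{d}{dt} \int_\Omega z_i = \int_\Omega \Delta z_{i-1} = 0$ keeps those modes frozen at zero.

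The main difficulty, and the technical heart of the proof, is to turn the pointwise-in-$k$ controllability into a \emph{quantitative, uniform} observability inequality for the finite-dimensional adjoint systems $\varphi_k'(t) = - M_k^{\text{tr}} \varphi_k(t)$. Concretely, I would prove the existence of $C > 0$ and $p \in \N$, independent of $k$ and $T$, such that $\abs{\varphi_k(0)}^2 \leq C e^{C/T} (1 + \lambda_k)^{p} \int_0^T \abs{B^{\text{tr}} \varphi_k(t)}^2 \, dt$. The explicit dependence on $\lambda_k$ is essential: because the iterates $M_k^j B$ carry increasing powers of $\lambda_k$ coming from the cross-diffusion, the Kalman determinant is a polynomial in $\lambda_k$, and a careful analysis of the controllability Gramian (rather than a crude spectral estimate, which the non-normality of $M_k$ would spoil) yields only a \emph{polynomial}, hence sub-exponential, blow-up of the observability constant. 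Summing these modal estimates over $\lambda_k \leq \mu$ and inserting the spectral inequality to pass from $L^2(\omega)$ to $L^2(\Omega)$ on $E_\mu$, I would obtain a partial observability inequality for the low-frequency block with cost bounded by $C e^{C \sqrt{\mu}}$; by duality (HUM) this produces a control of $\Pi_\mu Z$ to zero with the same cost.

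Finally, I would run the Lebeau--Robbiano iteration on a geometric partition of $(0,T)$ into alternating \emph{active} and \emph{passive} subintervals. On an active subinterval I steer the modes $\lambda_k \leq \mu_j$ to zero at cost $e^{C \sqrt{\mu_j}}$, with $\mu_j$ increasing dyadically; on the following passive subinterval no control acts and the remaining high-frequency part dissipates. Here I use that $D_J$ is diagonalizable with spectrum $\{d_1, \dots, d_n\} \subset (0, +\infty)$ --- by \Cref{Assdkneqdl} the diagonal entries of $D_\sharp$ are pairwise distinct, so $D_\sharp$ is diagonalizable --- whence $\norme{e^{t M_k}}$ is controlled by $C (1+\lambda_k)^q e^{-c \lambda_k t}$ for a uniform $c > 0$, the polynomial prefactor accounting for the non-normality. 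Balancing the control cost $e^{C \sqrt{\mu_j}}$ on each active interval against the dissipation $e^{-c \mu_{j+1} t_j}$ gained on the passive one and summing the telescoping series yields a single control $H^J$ driving $Z_0$ to $0$ in time $T$ together with the announced estimate $\norme{H^J}_{L^2(Q_T)^m} \leq C e^{C/T} \norme{Z_0}_{L^2(\Omega)^n}$. The hardest step remains the uniform polynomial-in-$\lambda_k$ observability of the finite-dimensional family, which is precisely where the cascade and cross-diffusion structure and the hypotheses \Cref{Assdkneqdl} and \Cref{asscouplordre0} are fully exploited.
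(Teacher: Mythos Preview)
Your proposal is correct and follows essentially the same Lebeau--Robbiano strategy as the paper: spectral decomposition into Neumann modes, a uniform-in-$k$ observability estimate for the finite-dimensional adjoint systems with a polynomial-in-$\lambda_k$ constant (the paper obtains $C(1+1/\tau^{p_1}+\lambda_k^{p_2})$ via an explicit Brunovsky-type construction in \Cref{ProofObsDimfinie}, while you invoke the Kalman/Gramian route, which is equivalent here), combination with the Jerison--Lebeau--Robbiano spectral inequality to get partial controllability on $E_\mu$ at cost $e^{C\sqrt{\mu}}$, and the standard active/passive iteration. Your handling of the zero mode via the constraint $L_{inv}^2$ and the dissipation via the diagonalizability of $D_J$ under \Cref{Assdkneqdl} also match the paper's reasoning.
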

The goal of the next two subsections is to prove \Cref{theolinear}. The proof is based on the Lebeau-Robbiano's method, introduced for the first time to prove the null-controllability of the heat equation (see \cite{LR}). First, it consists in establishing a null-controllability result in finite dimensional subspaces of $L_{inv}^2$ with a precise estimate of the cost of the control (see \Cref{TheoLinearLowFreq}). This first step is based on two main results: the spectral inequality for eigenfunctions of the Neumann-Laplace operator (see \Cref{lemSpecIn}) and precise observability estimates of linear finite dimensional systems associated to the adjoint system of \eqref{defsystZ} (see \Cref{lemmaCobs}). Secondly, we conclude by a time-splitting procedure: the control $H^J$ is built as a sequence of active controls and passive controls. The passive mode allows to take advantage of the natural parabolic exponential decay of the $L^2$ norm of the solution. This decay enables to compensate the cost of the control which steers the low frequencies to $0$ (see \Cref{LebeauRobMeth}).\\

\subsection{A null-controllability result for the low frequencies} 
We define $H_{Ne}^2(\Omega) := \left\{y \in H^2(\Omega)\ ;\ \frac{\partial y}{\partial \nu} = 0\right\}$. The unbounded operator on $L^2(\Omega)$: $(-\Delta,H_{Ne}^2(\Omega))$ is self-adjoint and has compact resolvent. Thus, we introduce the orthonormal basis $(e_k)_{k \geq 0}$ of $L^2(\Omega)$ of eigenfunctions associated to the increasing sequence of eigenvalues $(\lambda_k)_{k \geq 0}$ of the Laplacian operator, i.e., we have $-\Delta e_k = \lambda_k e_k$ and $(e_k,e_l)_{L^2(\Omega)} = \delta_{k,l}$. For $\lambda>0$, we define the finite dimensional space $E_{\lambda} = \left\{ \sum\limits_{\lambda_k \leq \lambda} c_k e_k\ ; \ c_k \in \R^n \right\} \subset L^2(\Omega)^n$ and the orthogonal projection $\Pi_{E_{\lambda}}$ onto $E_{\lambda}$ in $L^2(\Omega)^n$.\\

\indent The goal of this section is to prove the following null-controllability result in a finite dimensional subspace of $L_{inv}^2$.
\begin{prop} \label{TheoLinearLowFreq}
There exist $C >0$, $p_1 \in \N$ such that for every $\tau \in (0,T)$, $\lambda>0$, $Z_0 \in E_{\lambda} \cap L_{inv}^2$, there exists a control function $H^J \in L^2(Q_{\tau})$ verifying 
\begin{equation}
\norme{H^J}_{L^2(Q_\tau)^m}^2 \leq \frac{C}{\tau^{p_1}} e^{C \sqrt{\lambda}} \norme{Z_0}_{L^2(\Omega)^n}^2,
\label{CoutControlL^2BF}
\end{equation}
such that the solution $Z$ of 
\begin{equation}
\label{defsystZlowFreq}
\left\{
\begin{array}{l l}
 \partial_t Z - D_J \Delta Z = A_J Z + H^{J} 1_{\omega}&\mathrm{in}\ (0,\tau)\times\Omega,\\
\frac{\partial Z}{\partial \nu} = 0 &\mathrm{on}\ (0,\tau)\times\partial\Omega,\\
Z(0,.)=Z_{0} \in E_{\lambda} &\mathrm{in}\  \Omega,
\end{array}
\right.
\end{equation}
satisfies $Z(\tau,.)=0$.
\end{prop}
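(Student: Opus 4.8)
The plan is to prove this finite-dimensional null-controllability result by duality, deriving the control cost estimate \eqref{CoutControlL^2BF} from an observability inequality for the adjoint system, and then using the Hilbert Uniqueness Method (HUM) to produce the optimal control. The key structural ingredient is that the state lives in the finite-dimensional space $E_{\lambda}$, so the PDE \eqref{defsystZlowFreq} decomposes into a family of finite-dimensional linear control systems indexed by the eigenvalues $\lambda_k \leq \lambda$ of the Neumann Laplacian. Writing $Z = \sum_{\lambda_k \leq \lambda} Z^{(k)}(t) e_k$ with $Z^{(k)}(t) \in \R^n$, each Fourier coefficient satisfies an ODE system $\dot{Z}^{(k)} = (-\lambda_k D_J + A_J) Z^{(k)} + B^{(k)}$, where the control enters only through the first $m$ components localized on $\omega$. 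The cascade structure guaranteed by \eqref{CouplNonZeroA} (namely $a_{m+1,m} \neq 0$) together with the cross-diffusion coupling $\Delta z_{i-1}$ for $m+2 \leq i \leq n$ should give a Kalman-type controllability for each finite-dimensional block.

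First I would set up the adjoint system of \eqref{defsystZlowFreq} on the interval $(0,\tau)$ and reduce, by the spectral decomposition, the observability inequality to a statement about each Fourier mode. The crucial quantitative step is to prove a precise observability estimate for the family of adjoint ODE systems, tracking the dependence of the observability constant on both the eigenvalue $\lambda_k$ and the time $\tau$; this is presumably exactly the content of \Cref{lemmaCobs}. Combined with the spectral inequality of Jerison--Lebeau--Robbiano (\Cref{lemSpecIn}), which controls a finite sum of eigenfunctions by its restriction to $\omega$ at the cost of a factor $e^{C\sqrt{\lambda}}$, one obtains the global observability inequality for the adjoint of \eqref{defsystZlowFreq} with constant of the form $\frac{C}{\tau^{p_1}} e^{C\sqrt{\lambda}}$. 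The polynomial blow-up $\tau^{-p_1}$ in small time is the expected signature of a cascade/cross-diffusion system where controllability must propagate through $n-m$ successive coupling terms, each contributing to the power $p_1$.

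The HUM duality then yields a control satisfying \eqref{CoutControlL^2BF}, and one must verify that the resulting state remains in $E_{\lambda}$ and respects the invariance constraint encoded in $L_{inv}^2$. The preservation of $E_{\lambda}$ is automatic because both $-\Delta$ and the localized control (after projection) act within $E_{\lambda}$, while the constraint $\int_\Omega z_{i,0} = 0$ for $m+2 \leq i \leq n$ should be preserved by the dynamics since those equations $\partial_t z_i - d_i \Delta z_i = \Delta z_{i-1}$ integrate to $\frac{d}{dt}\int_\Omega z_i = 0$ under Neumann conditions; this is precisely why $L_{inv}^2$ is the natural space and why the constraint is compatible with reaching $Z(\tau,\cdot) = 0$.

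\emph{Main obstacle.} The hard part will be the uniform-in-$(\lambda_k,\tau)$ observability estimate for the finite-dimensional adjoint systems (\Cref{lemmaCobs}). Unlike the zero-order cascade of \Cref{TheoCascade}, here the coupling is through the \emph{second-order} cross-diffusion terms $\Delta z_{i-1}$, so on the $k$-th Fourier mode the effective coupling coefficient is $\lambda_k$, and the Kalman matrix of each block degenerates as $\lambda_k \to 0$ (low modes) and its conditioning deteriorates as $\lambda_k \to \infty$. Getting a clean exponential dependence $e^{C\sqrt{\lambda}}$ rather than, say, $e^{C\lambda}$ requires carefully exploiting the triangular cascade structure and the fact that the uncontrolled directions have distinct diffusion coefficients (\Cref{Assdkneqdl}), so that the interplay between the spectral gap of $D_\sharp$ and the coupling yields the correct cost. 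Controlling the precise power $p_1$ of $\tau$ in small time, uniformly over all modes simultaneously, is the delicate quantitative core of the argument.
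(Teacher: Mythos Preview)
Your proposal is correct and follows the same route as the paper: HUM reduces the statement to the observability inequality of \Cref{propobsLowfreq}, which is obtained by expanding the adjoint solution in Neumann eigenfunctions, applying the spectral inequality \Cref{lemSpecIn} mode by mode, and invoking the finite-dimensional observability \Cref{lemmaCobs} for each Fourier block. One clarification on your ``main obstacle'': the constant in \Cref{lemmaCobs} is only \emph{polynomial} in $\lambda_k$, of the form $C(1+\tau^{-p_1}+\lambda_k^{p_2})$, so there is no danger of producing $e^{C\lambda}$; the factor $e^{C\sqrt{\lambda}}$ comes entirely from the spectral inequality and simply absorbs this polynomial since $\lambda_k\le\lambda$ (and \Cref{Assdkneqdl} plays no role at this stage---it was used earlier to set up the change of variables). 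Also, the controlled trajectory does \emph{not} remain in $E_\lambda$ (multiplication by $1_\omega$ excites high modes), but this is immaterial: the Lebeau--Robbiano iteration in \Cref{LebeauRobMeth} only uses the control-cost bound \eqref{CoutControlL^2BF}, and the high-frequency spillover is handled by the dissipation in the passive phase.
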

From \Cref{TheoLinearLowFreq}, for every $\tau,\lambda>0$ and $Z_0 \in E_{\lambda} \cap L_{inv}^2$  we introduce the notation:
\begin{equation}
\label{ControleMinL^2}
H_{\lambda}(Z_0, 0,\tau) := H^J,
\end{equation}
such that the solution $Z$ of \eqref{defsystZlowFreq} satisfies $Z(\tau,.)=0$ and $H^J$ is the minimal-norm element of $L^2(Q_{\tau})^m$ satisfying the estimate \eqref{CoutControlL^2BF}. In other words, $H^J$ is the projection of $0$ in the nonempty closed convex set of controls satisfying \eqref{CoutControlL^2BF} and driving the solution $Z$ of \eqref{defsystZlowFreq} in time $\tau$ to $0$.\\

By the Hilbert Uniqueness Method (see \cite[Theorem 2.44]{C}), in order to prove \Cref{TheoLinearLowFreq}, we need to prove an observability inequality for the solution of the adjoint system of \eqref{defsystZlowFreq}.
\begin{prop}
\label{propobsLowfreq}
There exist $C>0$, $p_1 \in \N$ such that for every $\tau \in (0,T)$, $\lambda>0$ and $\varphi_{\tau} \in E_{\lambda} \cap L_{inv}^2$, the solution $\varphi$ of 
\begin{equation}
\left\{
\begin{array}{l l}
-\partial_t {\varphi} - D_J^{\text{tr}} \Delta \varphi = A_J^{\text{tr}} \varphi &\mathrm{in}\ (0,\tau)\times\Omega,\\
\frac{\partial\varphi}{\partial \nu} = 0 &\mathrm{on}\ (0,\tau)\times\partial\Omega,\\
\varphi(\tau,.)=\varphi_{\tau} &\mathrm{in}\  \Omega,
\end{array}
\right.
\label{adjLowFreq}
\end{equation}
satisfies
\begin{equation}
\label{obslowFreq}
\norme{\varphi(0,.)}_{L^2(\Omega)^n}^2 \leq \frac{C}{\tau^{p_1}} e^{C \sqrt{\lambda}} \sum\limits_{i=1}^{m} \int_{0}^{\tau}\int_{\omega} \left| \varphi_{i}(t,x)\right|^{2} dxdt.
\end{equation}
\end{prop}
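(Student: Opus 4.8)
The plan is to prove the observability inequality \eqref{obslowFreq} for the finite-dimensional adjoint system \eqref{adjLowFreq} by combining the spectral inequality for Neumann eigenfunctions (\Cref{lemSpecIn}) with precise observability estimates for associated finite-dimensional ODE systems (\Cref{lemmaCobs}), following the Lebeau--Robbiano philosophy adapted to cascade cross-diffusion systems. Since $\varphi_\tau \in E_\lambda$, I would first argue that the solution $\varphi$ stays in $E_\lambda$ for all $t \in (0,\tau)$: indeed, expanding $\varphi(t,x) = \sum_{\lambda_k \leq \lambda} \varphi^k(t) e_k(x)$ with vector coefficients $\varphi^k(t) \in \R^n$, and substituting into the adjoint PDE, the operator $-\Delta$ acts diagonally on each eigenmode, so the evolution decouples into a family of finite-dimensional linear ODE systems indexed by $k$:
\begin{equation}
-\frac{d}{dt}\varphi^k(t) = \left(-\lambda_k D_J^{\text{tr}} + A_J^{\text{tr}}\right)\varphi^k(t), \qquad \lambda_k \leq \lambda.
\notag
\end{equation}
This is the key structural reduction: controllability/observability of the PDE on $E_\lambda$ is governed by the observability of each of these $n \times n$ ODE systems through the first $m$ components, with the coupling matrix $M_k := -\lambda_k D_J^{\text{tr}} + A_J^{\text{tr}}$.

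Next I would invoke \Cref{lemmaCobs} to obtain, for each mode $k$, a precise observability estimate for the ODE system above: the initial (i.e.\ time-$0$) value of $\varphi^k$ is controlled by the time-integral over $(0,\tau)$ of its first $m$ components, with a constant of the form $\frac{C}{\tau^{p_1}} e^{C\sqrt{\lambda_k}}$. The crucial point making this estimate uniform and polynomial-in-$1/\tau$ is the cascade structure encoded in $D_\sharp$ together with the nonvanishing coupling $a_{m+1,m} \neq 0$ from \eqref{CouplNonZeroA}: the off-diagonal $1$'s in $D_\sharp$ multiplied by $\lambda_k$ provide the large cross-diffusion couplings $z_{i-1} \to z_i$, and a Kalman-type / Hautus argument shows the pair $(M_k, \text{projection onto first } m \text{ coordinates})$ is observable, with the observability constant growing at most like $e^{C\sqrt{\lambda_k}} \leq e^{C\sqrt{\lambda}}$. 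Summing these modewise estimates over all $k$ with $\lambda_k \leq \lambda$ gives
\begin{equation}
\norme{\varphi(0,.)}_{L^2(\Omega)^n}^2 \leq \frac{C}{\tau^{p_1}} e^{C\sqrt{\lambda}} \sum_{\lambda_k \leq \lambda} \sum_{i=1}^m \int_0^\tau |\varphi_i^k(t)|^2\, dt.
\notag
\end{equation}

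The final and most delicate step is to pass from the sum of squared ODE-coefficients $\sum_{\lambda_k \leq \lambda}|\varphi_i^k(t)|^2$ to the genuine localized observation $\int_\omega |\varphi_i(t,x)|^2\,dx$ over the subdomain $\omega$. Here the spectral inequality of Jerison--Lebeau--Robbiano (\Cref{lemSpecIn}) is exactly what is needed: for any finite linear combination $\sum_{\lambda_k \leq \lambda} c_k e_k$ of Neumann eigenfunctions, its full $L^2(\Omega)$ norm is bounded by $C e^{C\sqrt{\lambda}}$ times its $L^2(\omega)$ norm. Applying this at each fixed time $t$ to each observed component $\varphi_i(t,\cdot) = \sum_{\lambda_k \leq \lambda}\varphi_i^k(t) e_k \in E_\lambda$, for $1 \leq i \leq m$, converts $\sum_{\lambda_k \leq \lambda}|\varphi_i^k(t)|^2 = \norme{\varphi_i(t,\cdot)}_{L^2(\Omega)}^2$ into a bound by $C e^{C\sqrt{\lambda}} \int_\omega |\varphi_i(t,x)|^2\,dx$; integrating in time and absorbing the extra exponential factor into $e^{C\sqrt{\lambda}}$ yields precisely \eqref{obslowFreq}. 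The main obstacle is the second step: establishing the ODE observability constant of the sharp form $\frac{C}{\tau^{p_1}}e^{C\sqrt{\lambda}}$ uniformly in the mode index $k$, since one must verify that the Kalman/Hautus observability of $(M_k,\cdot)$ degrades no worse than $e^{C\sqrt{\lambda_k}}$ as $\lambda_k \to \infty$ despite the coupling entries scaling like $\lambda_k$; this is the content of \Cref{lemmaCobs} and is where the careful algebra of the cascade cross-diffusion matrix $D_\sharp$ must be controlled.
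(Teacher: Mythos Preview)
Your approach is essentially the same as the paper's: decompose $\varphi$ along Neumann eigenmodes to reduce to a family of ODE systems \eqref{systedo}, apply the finite-dimensional observability estimate \Cref{lemmaCobs} modewise, and combine with the Jerison--Lebeau--Robbiano spectral inequality \Cref{lemSpecIn}. One correction: the constant furnished by \Cref{lemmaCobs} is actually \emph{polynomial} in $\lambda_k$, namely $C(1+\tau^{-p_1}+\lambda_k^{p_2})$, not $e^{C\sqrt{\lambda_k}}$ as you write; this polynomial factor is then absorbed into $e^{C\sqrt{\lambda}}$ via $\lambda_k^{p_2}\le C e^{C\sqrt{\lambda}}$ for $\lambda_k\le\lambda$, so your worry about exponential degradation of the Kalman/Hautus constant is unfounded and the step is in fact easier than you suggest. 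You should also note that the constraint $\varphi_\tau\in L_{inv}^2$ forces $\varphi_0^\tau\in\R^{m+1}\times\{0\}^{n-m-1}$ for the zero mode, which is needed for \Cref{lemmaCobs} to apply there (since for $\lambda_0=0$ the cross-diffusion couplings in $D_\sharp$ vanish).
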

\begin{proof}
The proof is inspired by \cite[Section 3]{LiZu}.\\
\indent Let $\tau >0, \lambda >0$ and $\varphi_{\tau} \in E_{\lambda} \cap L_{inv}^2$. We have:
\begin{equation}
\label{decomInit}
\varphi_{\tau}(x) = \sum_{\lambda_k \leq \lambda} \varphi_{k}^{\tau} e_k(x),
\end{equation}
with $\varphi_{k}^{\tau} \in F_k$ where $F_0 := \R^{m+1} \times \{0\}^{n-m-1}$ because $\varphi_{\tau} \in L_{inv}^2$ and $F_k := \R^{n}$ for $k \geq 1$.\\
\indent Then, the solution $\varphi$ of \eqref{adjLowFreq} is
\begin{equation}
\label{decompsol}
\forall (t,x) \in (0,\tau)\times\Omega,\ \varphi(t,x) = \sum\limits_{\lambda_k \leq \lambda} \varphi_{k}(t) e_k(x),
\end{equation}
where $\varphi_k$ is the unique solution of the ordinary differential system
\begin{equation}
\label{systedo}
\left\{
\begin{array}{ll}
-\varphi_k' + \lambda_k D_{J}^{\text{tr}} \varphi_k= A_J^{\text{tr}}\varphi_k,& \text{in}\ (0,\tau),\\
\varphi_k(\tau)=\varphi_{k}^{\tau}&.
\end{array}
\right.
\end{equation}
We recall the spectral inequality for eigenfunctions of the Neumann-Laplace operator.
\begin{lem}\cite[Theorem 14.6]{JeLe}\\
\label{lemSpecIn}
There exists $C>0$ such that for every sequence $(a_k)_{ k\geq 0} \subset \C^{\N}$ and for every $\lambda >0$, we have:
\begin{equation}
\label{spectralin}
\sum\limits_{\lambda_k \leq \lambda} |a_k|^{2} = \int_{\Omega} \left|\sum\limits_{\lambda_k \leq \lambda} a_k e_k(x)\right|^{2} dx \leq C e^{C \sqrt{\lambda}} \int_{\omega} \left|\sum\limits_{\lambda_k \leq \lambda} a_k e_k(x)\right|^{2} dx.
\end{equation}
\end{lem}
By using \eqref{spectralin} for $a_k=\varphi_{k,i}(t)$ with $1 \leq i \leq m$ and by summing on $1\leq i\leq m$, we obtain that there exists $C >0$ such that
\begin{equation}
\label{spectralinbis}
\sum\limits_{\lambda_k \leq \lambda} \sum\limits_{i=1}^{m} |\varphi_{k,i}(t)|^{2} \leq C e^{C \sqrt{\lambda}} \sum\limits_{i=1}^{m} \int_{\omega} \left|\sum\limits_{\lambda_k \leq \lambda}  \varphi_{k,i}(t) e_k(x)\right|^{2} dx.
\end{equation}
By integrating with respect to the time variable between $0$ and $\tau$ the inequality \eqref{spectralinbis}, we obtain
\begin{equation}
\label{spectralinfour}
\int_{0}^{\tau}\sum\limits_{\lambda_k \leq \lambda} \sum\limits_{i=1}^{m} |\varphi_{k,i}(t)|^{2}dt \leq C e^{C \sqrt{\lambda}} \sum\limits_{i=1}^{m} \int_{0}^{\tau}\int_{\omega} \left|\sum\limits_{\lambda_k \leq \lambda}  \varphi_{k,i}(t) e_k(x)\right|^{2} dxdt.
\end{equation}
Now, our goal is to establish the following lemma.
\begin{lem}
\label{lemmaCobs}
There exist $C$, $p_1, p_2 \in \N$ such that for every $\tau \in (0,1)$, $k \in \N$, $\varphi_{k}^{\tau} \in F_k$, the solution $\varphi_k$ of \eqref{systedo} satisfies
\begin{equation}
\label{obsap}
\norme{\varphi_{k}(0)}^2 \leq C \left(1 + \frac{1}{\tau^{p_1}} + \lambda_k^{p_2}\right) \sum\limits_{i=1}^{m}\int_{0}^{\tau} |\varphi_{k,i}(t)|^{2} dt.
\end{equation}
\end{lem}
One can take for instance $p_1 =p_2 = 2(n-m+1/2)$ in \eqref{obsap}, we give a proof in \Cref{ProofObsDimfinie} (see also \cite{Se}).\\
\indent By using \eqref{spectralinfour}, \eqref{obsap}, we deduce that
\begin{align}
\label{finobs}
\sum\limits_{\lambda_k \leq \lambda} \norme{\varphi_{k}(0)}^{2} & \leq \sum\limits_{\lambda_k\leq \lambda} \frac{C}{\tau^{p_1}} (1 +  \lambda_k^{p_2}) \sum\limits_{i=1}^{m} \int_{0}^{\tau} |\varphi_{k,i}(t)|^{2} dt\\
& \leq \frac{C}{\tau^{p_1}} e^{C \sqrt{\lambda}}\sum\limits_{i=1}^{m} \int_{0}^{\tau}\int_{\omega} \left|\sum\limits_{\lambda_k \leq \lambda}  \varphi_{k,i}(t) e_k(x)\right|^{2} dxdt.\notag
\end{align}
By using \eqref{decompsol}, we deduce \eqref{obslowFreq} from \eqref{finobs}.
\end{proof}

\subsection{The Lebeau-Robbiano's method}
\label{LebeauRobMeth}
The goal of this section is to prove \Cref{theolinear}.
\begin{proof}
The proof is inspired by \cite[Section 6.2]{LLR}. The constants $C,C'$ will increase from line to line.\\
\indent We split the interval $[0,T] = \cup_{k \in \N} [a_k, a_{k+1}]$ with $a_0 =0$, $a_{k+1} = a_{k} + 2 T_k$ and $T_k = T/2^{k}$ for $k\in \N$. We also define $\mu_k = M2^{2k}$ for $M>0$ sufficiently large which will be defined later and for $k \in \N$. Then, we define the control $H^{J}$ in the following way:
\begin{itemize}
\item if $t \in (a_k, a_k+T_k)$, $H^{J} = H_{\mu_k}(\Pi_{E_{\mu_{k}}}Z(a_k,.),a_k, T_k)$ (see the notation \eqref{ControleMinL^2}) and $Z(t,.) = S(t-a_k)Z(a_k,.) + \int_{a_k}^{t} S(t-s) H^{J}(s,.)ds$,
\item if $t \in (a_k+T_k, a_{k+1})$, $H^{J} = 0$ and $Z(t,.) = S(t-a_k-T_k)Z(a_k+T_k,.)$,
\end{itemize}
where $S(t)$ denotes the semigroup of the parabolic system: $S(t) = e^{t (D_J \Delta + A_J)}$. In particular, by \eqref{estl2faible} and \eqref{injclassique}, $\norme{S(t)}_{\mathcal{L}(L^2(\Omega)^n)} \leq C$.\\
\indent By \eqref{CoutControlL^2BF}, the choice of $H^J$ during the interval time $[a_k, a_k +T_k]$ implies
\begin{align}
\label{SolActiv}
\norme{Z(a_k+T_k,.)}_{L^2(\Omega)^n}^2 &\leq (C + C (2^{-k} T)^{- p_1} e^{C \sqrt{M} 2^{k}}) \norme{Z(a_k,.)}_{L^2(\Omega)^n}^2\\
& \leq  \frac{C}{T^{p_1}} e^{C \sqrt{M}2^{k}} \norme{Z(a_k,.)}_{L^2(\Omega)^n}^2.\notag
\end{align}
During the passive period of the control, $t \in [a_k + T_k, a_{k+1}]$, the solution exponentially decreases:
\begin{equation}
\label{SolPassiv}
\norme{Z(a_{k+1},.)}_{L^2(\Omega)^n}^2 \leq C' e^{-C'M2^{2k} T_k}  \norme{Z(a_k+T_k,.)}_{L^2(\Omega)^n}^2.
\end{equation}
Thus, by using $2^{2k} T_k = 2^{k}T$, \eqref{SolActiv} and \eqref{SolPassiv}, we have
$$\norme{Z(a_{k+1},.)}_{L^2(\Omega)^n}^2 \leq  e^{C \sqrt{M} 2^{k}- C' M 2^{k}T + C/T }\norme{Z(a_k,.)}_{L^2(\Omega)^n}^2,$$
and consequently,
\begin{align}
\label{estglobalsol}
\norme{Z(a_{k+1},.)}_{L^2(\Omega)^n}^2 &\leq e^{\sum_{j=0}^{k} \left(C\sqrt{M} 2^{j}-C'M T 2^{j} + C/T\right)} \norme{Z_0}_{L^2(\Omega)^n}^2\\
& \leq e^{(C\sqrt{M} - C'MT)2^{k+1} + (C /T) (k+1)}\norme{Z_0}_{L^2(\Omega)^n}^2.\notag
\end{align} 
By taking $M$ such that $C \sqrt{M} - C' MT < 0$, for instance $M \geq 2(C/C'T)^{2}$, we conclude by \eqref{estglobalsol} that we have $ \lim_{k \rightarrow + \infty} \norme{Z(a_k,.)} = 0$, i.e., $Z(T,.)=0$ because $t \mapsto Z(t,.) \in C([0,T];L^2(\Omega)^n)$ because $H^J \in L^2(Q_T)^m$ (see \Cref{wpl2linfty} and \eqref{injclassique}) as we will show now.\\
\indent We have $\norme{H^J}_{L^2(Q_T)^m}^2 = \sum_{k=0}^{+\infty} \norme{H^J}_{L^2((a_k,a_k+T_k)\times\Omega)^m}^2$. Then, by using the estimate \eqref{CoutControlL^2BF} of the control on each time interval $(a_0,a_0+T_0)$ and the estimate \eqref{estglobalsol}, we get:
\begin{align}
\label{EsticontrolT}
\norme{H^J}_{L^2(Q_T)^m}^2 &\leq \left(C T_0^{-p_1} e^{C\sqrt{M}} + \sum\limits_{k \geq 1} C T_k^{-p_1} e^{C \sqrt{M}2^k} e^{-C'MT 2^{(k-1)}}\right) \norme{Z_0}_{L^2(\Omega)^n}^2\\
& \leq \left(C T^{-p_1} e^{C\sqrt{M}} + \sum\limits_{k \geq 1} C (2^{k}T^{-1})^{p_1} e^{(C \sqrt{M}-C'MT/2) 2^{k}}\right) \norme{Z_0}_{L^2(\Omega)^n}^2.\notag
\end{align}
By taking $M$ such that $C \sqrt{M}-C'MT/2 < 0$, for instance $M \geq 8 (C/C'T)^{2} \Rightarrow C \sqrt{M}-C'MT/2 = -C''/T$ with $C''>0$, we deduce from \eqref{EsticontrolT} that $H^J \in L^2(Q_T)^m$ and
\begin{align*}
\norme{H^J}_{L^2(Q_T)^m}^2 &\leq \left(C e^{C/T} + C \int_{0}^{+\infty} \left(\frac{\sigma}{T}\right)^{p_1} e^{-C'' \frac{\sigma}{T}} d\sigma\right) \norme{Z_0}_{L^2(\Omega)^n}^2\\
& \leq C e^{C/T} \norme{Z_0}_{L^2(\Omega)^n}^2.
\end{align*}
This concludes the proof of \Cref{theolinear}.
\end{proof}

\section{The source term method in $L^2$}\label{SourcetermL^2}
We use the source term method, introduced by Yuning Liu, Takéo Takahashi and Marius Tucsnak in \cite[Proposition 2.3]{LTT} to deduce a local null-controllability result for a nonlinear system from the null-controllability result for only one linear system (and an estimate of the cost of the control) (see also \cite{BM}).\\
\indent By \Cref{theolinear}, we have an estimate for the control cost in $L^2$, then we fix $M>0$ such that $C_T \leq M e^{M/T}$. Let $q \in (1, \sqrt{2})$ and $p > q^{2}/(2-q^{2})$. We define the weights
\begin{equation}
\label{defrho0bis}
\rho_0(t) := M^{-p} \exp\left(- \frac{Mp}{(q-1)(T-t)}\right),
\end{equation}
\begin{equation}
\label{defrhoG}
\rho_{\mathcal{S}}(t) = M^{-1-p} \exp\left(-\frac{(1+p)q^{2}M}{(q-1)(T-t)}\right).
\end{equation}
\begin{rmk}
\label{rmkp>q^2}
The assumption $ p > q^{2}/(2-q^{2})\Leftrightarrow 2p > (1+p)q^{2}$ implies
\begin{equation}
\label{rhorhoG}
\rho_0^{2}/\rho_{\mathcal{S}} \in C([0,T]),
\end{equation}
which will be useful for the estimate of the polynomial nonlinearity (see \Cref{Fixed point}).\\
\end{rmk}
\indent Let $r \in \{2, +\infty\}$. For $S \in L^r((0,T);L_{inv}^r)$, $H^J \in L^r((0,T);L^r(\Omega)^m)$, $Z_0 \in L_{inv}^r$, we introduce the following system:
\begin{equation}
\tag{L+S-Z}
\label{defsystZSourceS}
\left\{
\begin{array}{l l}
 \partial_t Z - D_J \Delta Z = A_J Z + S + H^{J} 1_{\omega}&\mathrm{in}\ (0,T)\times\Omega,\\
\frac{\partial Z}{\partial \nu} = 0 &\mathrm{on}\ (0,T)\times\partial\Omega,\\
Z(0,.)=Z_{0} &\mathrm{in}\  \Omega.
\end{array}
\right.
\end{equation}
Then, we define associated spaces for the source term, the state and the control
\begin{align}
& \mathcal{S}_r := \left\{S \in  L^r((0,T);L_{inv}^r)\ ;\ \frac{S}{\rho_{\mathcal{S}}} \in  L^r((0,T);L_{inv}^r)\right\},\label{defpoidsg}\\
& \mathcal{Z}_r := \left\{Z \in L^r((0,T);L_{inv}^r)\ ;\ \frac{Z}{\rho_0} \in L^r((0,T);L_{inv}^r)\right\},
\label{defpoidsf}\\
& \mathcal{H}_r := \left\{H^J \in L^r((0,T);L^r(\Omega)^m)\ ;\ \frac{H^J}{\rho_{0}}  \in L^r((0,T);L^r(\Omega)^m)\right\}.\label{defpoidsu}
\end{align}
\begin{rmk}
From the behaviors near $t=T$ of $\rho_{\mathcal{S}}$ and $\rho_0$, we deduce that each element of $\mathcal{S}_r$, $\mathcal{Z}_r$, $\mathcal{H}_r$ vanishes at $t=T$.
\end{rmk}
From the abstract result: \cite[Proposition 2.3]{LTT}, we deduce the null-controllability for \eqref{defsystZSourceS} in $L_{inv}^2$.
\begin{prop}\label{PropSourceTermL^2}
For every $S \in \mathcal{S}_2$ and $Z_0 \in L_{inv}^2$, there exists $H^J \in \mathcal{H}_2$, such that the solution $Z$ of \eqref{defsystZSourceS} satisfies $Z \in \mathcal{Z}_2$. Furthermore, there exists $C >0$, not depending on $S$ and $Z_0$, such that
\begin{equation}
\label{EstiLTT}
\norme{Z/\rho_0}_{C([0,T];L^2(\Omega)^n)} + \norme{H^J}_{\mathcal{H}_2} \leq C_T \left( \norme{Z_0}_{L^{2}(\Omega)^n} + \norme{S}_{\mathcal{S}_2}\right),
\end{equation}
where $C_T = C e^{C/T}$. In particular, since $\rho_0$ is a continuous function satisfying $\rho_0(T)=0$, the above relation \eqref{EstiLTT} yields $Z(T,.)=0$.
\end{prop}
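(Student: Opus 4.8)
The plan is to recast \eqref{defsystZSourceS} in the abstract framework of the source term method \cite[Proposition 2.3]{LTT} and then apply it verbatim. First I would fix the functional setting: the state space $X := L_{inv}^2$, the generator $\mathcal{A} := D_J \Delta + A_J$ with Neumann boundary conditions, and the bounded control operator $H^J \mapsto H^J 1_{\omega}$ (extended by zero on the last $n-m$ components). By \Cref{wpl2linfty}, applied to $D_J$ (which is diagonalizable with positive spectrum thanks to \Cref{Assdkneqdl}, as in \Cref{RMKDdiag}, its block-diagonal structure reducing the question to the two diagonalizable blocks), $\mathcal{A}$ generates a strongly continuous semigroup $S(t)=e^{t(D_J\Delta + A_J)}$ on $L^2(\Omega)^n$, and since the control operator is bounded it is automatically admissible.

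Before invoking the abstract result I would check that $X = L_{inv}^2$ is invariant under the dynamics, so that the whole construction remains inside the constrained space. For $m+2 \leq i \leq n$, the $i$-th line of \eqref{defsystZSourceS} reads $\partial_t z_i - d_i \Delta z_i = \Delta z_{i-1} + S_i$, with no control acting on these components. Integrating over $\Omega$, using the Neumann boundary conditions to kill $\int_{\Omega}\Delta z_i$ and $\int_{\Omega}\Delta z_{i-1}$, and using $\int_{\Omega} S_i = 0$ (because $S(t,\cdot)\in L_{inv}^2$), I get $\frac{d}{dt}\int_{\Omega} z_i(t,\cdot) = 0$; hence every zero-average constraint is preserved in time and the trajectory stays in $L_{inv}^2$.

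Next I would feed in the control cost. \Cref{theolinear} provides null-controllability of the homogeneous pair $(\mathcal{A},B)$ in $L_{inv}^2$ in every time $\tau\in(0,T)$ with cost bounded by $Ce^{C/\tau}$, which is exactly the input required by \cite[Proposition 2.3]{LTT}. The weights $\rho_0$ and $\rho_{\mathcal{S}}$ of \eqref{defrho0bis}-\eqref{defrhoG} are the admissible weights attached to this exponential cost for the parameters $q\in(1,\sqrt 2)$ and $p > q^2/(2-q^2)$, with $M$ fixed so that $C_T \leq Me^{M/T}$. The abstract construction splits $[0,T]$ into subintervals accumulating at $t=T$; on each subinterval one cancels, via the homogeneous null-controllability, both the current state and the contribution of the source, and the geometric decay of $\rho_0,\rho_{\mathcal{S}}$ near $t=T$ dominates the blow-up $e^{C/\tau}$ of the cost, making the series defining $H^J$ and $Z$ converge. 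This yields $H^J\in\mathcal{H}_2$, a solution $Z\in\mathcal{Z}_2$, and the estimate \eqref{EstiLTT}; since $Z/\rho_0\in C([0,T];L^2(\Omega)^n)$ and $\rho_0(T)=0$, letting $t\to T$ forces $Z(T,\cdot)=0$.

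The hard part is not the iteration itself, which is a direct transcription of \cite[Proposition 2.3]{LTT}, but checking the two compatibility facts that make the transcription legitimate: the invariance of $L_{inv}^2$ described above, and the matching between the cost $Ce^{C/T}$ of \Cref{theolinear} and the precise admissibility condition on $\rho_0,\rho_{\mathcal{S}}$ demanded by the abstract statement. I would verify the latter by a direct computation with the explicit exponentials in \eqref{defrho0bis}-\eqref{defrhoG}, the relation \eqref{rhorhoG} being the algebraic identity that encodes the correct balance of the parameters $p$ and $q$.
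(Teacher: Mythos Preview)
Your proposal is correct and follows essentially the same approach as the paper: the paper's proof (carried out in \Cref{AppSourceTermMethod} for $r\in\{2,+\infty\}$ simultaneously) is precisely the time-splitting iteration of \cite[Proposition 2.3]{LTT} you describe, with \Cref{theolinear} supplying the cost $Me^{M/\tau}$ on each subinterval $(T_k,T_{k+1})$ and the explicit identity $M e^{M/(T_{k+2}-T_{k+1})}\rho_{\mathcal{S}}(T_k)=\rho_0(T_{k+2})$ providing the weight compatibility. Your explicit check that $L_{inv}^2$ is invariant under the dynamics is a point the paper uses implicitly (to apply \Cref{theolinear} to each $a_k$) without spelling it out; note however that the relation \eqref{rhorhoG} you cite is used later for the nonlinearity rather than for the source-term iteration itself.
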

For the sake of completeness, the proof of \Cref{PropSourceTermL^2} is in \Cref{AppSourceTermMethod} (see \Cref{MethodLTTLinfty} applied with $r=2$).\\
\indent Now, we will deduce an observability estimate for the adjoint system:
\begin{equation}
\label{defsystZAdj}
\left\{
\begin{array}{l l}
- \partial_t \varphi - D_J^{\text{tr}} \Delta \varphi = A_J^{\text{tr}} \varphi&\mathrm{in}\ (0,T)\times\Omega,\\
\frac{\partial \varphi}{\partial \nu} = 0 &\mathrm{on}\ (0,T)\times\partial\Omega,\\
\varphi(T,.)=\varphi_{T} &\mathrm{in}\  \Omega.
\end{array}
\right.
\end{equation}
We have the following result which is an adaptation of \cite[Corollary 2.6]{LTT} or \cite[Theorem 4.1]{ImTak} (see \Cref{proofStrongObs} for a complete proof).
\begin{cor}\label{CorStrongObs}
There exists $C>0$ such that for every $\varphi_T \in L_{inv}^2$, the solution of \eqref{defsystZAdj} satisfies:
\begin{equation}
\label{ObsLTT}
\norme{\varphi(0,.)}_{L^2(\Omega)^n}^2 + \int_0^T \int_{\Omega} |\rho_{\mathcal{S}}(t) \varphi(t,x)|^2 dt dx \leq C_T \left( \sum\limits_{i=1}^m \int_0^T \int_{\omega} |\rho_{0}(t) \varphi_i(t,x)|^2 dt dx \right),
\end{equation}
where $C_T = C e^{C/T}$.
\end{cor}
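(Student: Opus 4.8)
The goal is to prove \Cref{CorStrongObs}, a strong observability inequality for the adjoint system \eqref{defsystZAdj}, starting from the null-controllability result \Cref{PropSourceTermL^2} with source term. The natural approach is a duality argument: since \Cref{PropSourceTermL^2} gives a quantitative control estimate \eqref{EstiLTT} for the primal system \eqref{defsystZSourceS}, I would test the primal equation against the adjoint solution and integrate by parts in time and space over $Q_T$, using the Neumann boundary conditions to discard boundary contributions. The plan is to choose the data cleverly so that the control term produces exactly the right-hand side of \eqref{ObsLTT} and the source term produces the weighted interior integral of $\varphi$.

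\indent More concretely, I would proceed as follows. First, fix $\varphi_T \in L_{inv}^2$ and let $\varphi$ be the solution of the adjoint system \eqref{defsystZAdj}. The key is to run the source term method with a well-chosen source $S$ and zero initial data $Z_0 = 0$: by pairing the equation for $Z$ against $\varphi$ and using that $\varphi$ solves the backward adjoint equation, the bilinear identity reduces to
\begin{equation*}
\int_{Q_T} S \cdot \varphi + \sum_{i=1}^m \int_0^T \int_\omega H^J_i \varphi_i = (Z(T,\cdot),\varphi_T) - (Z(0,\cdot),\varphi(0,\cdot)).
\end{equation*}
Since \Cref{PropSourceTermL^2} guarantees $Z(T,\cdot)=0$ and $Z(0,\cdot)=Z_0=0$, the whole right-hand side vanishes, giving an identity that couples the source pairing to the control pairing. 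I would then take the source to be $S = \rho_{\mathcal{S}}^2 \varphi$ (suitably projected to preserve membership in $L_{inv}^2$, which holds because $\varphi$ inherits the invariance structure of the adjoint dynamics), so that $\int_{Q_T} S\cdot\varphi = \int_{Q_T} \rho_{\mathcal{S}}^2 |\varphi|^2$ reproduces the weighted volume term. Checking $S \in \mathcal{S}_2$ amounts to controlling $\int_0^T\int_\Omega |\rho_{\mathcal{S}}\varphi|^2$, which one absorbs using \eqref{rhorhoG} and energy estimates for the adjoint system.

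\indent The control term is then estimated by Cauchy--Schwarz in the weighted spaces:
\begin{equation*}
\Big| \sum_{i=1}^m \int_0^T \int_\omega H^J_i \varphi_i \Big| \leq \norme{H^J}_{\mathcal{H}_2} \Big( \sum_{i=1}^m \int_0^T\int_\omega |\rho_0 \varphi_i|^2 \Big)^{1/2},
\end{equation*}
and I would insert the bound on $\norme{H^J}_{\mathcal{H}_2}$ coming from \eqref{EstiLTT}, in which $\norme{S}_{\mathcal{S}_2}$ is itself controlled by the weighted volume term and hence by $\norme{\varphi(0,\cdot)}$ through backward parabolic regularity. Combining the identity with these two estimates and using the elementary inequality $ab \leq \tfrac12 a^2 + \tfrac12 b^2$ lets me absorb the volume term $\int_{Q_T}|\rho_{\mathcal{S}}\varphi|^2$ on the left-hand side, leaving the localized control integral on the right with the expected constant $C_T = Ce^{C/T}$. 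The term $\norme{\varphi(0,\cdot)}^2$ is recovered in parallel by a standard energy argument for the backward adjoint system, bounding $\norme{\varphi(0,\cdot)}$ by the weighted volume quantity already estimated.

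\indent The main obstacle I anticipate is the bookkeeping that ensures the chosen source $S = \rho_{\mathcal{S}}^2\varphi$ genuinely lies in $\mathcal{S}_2$ and respects the invariance constraint $L_{inv}^2$, so that \Cref{PropSourceTermL^2} applies verbatim; this requires verifying both that $\varphi(t,\cdot) \in L_{inv}^2$ is propagated by the adjoint flow and that the ratio $\rho_{\mathcal{S}}^2/\rho_{\mathcal{S}} = \rho_{\mathcal{S}}$ gives a finite weighted norm, which is where the relation \eqref{rhorhoG} between $\rho_0^2/\rho_{\mathcal{S}}$ is invoked to close the chain of estimates. Once the weighted spaces are matched correctly, the duality identity does the remaining work mechanically.
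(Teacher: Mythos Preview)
Your approach is essentially the same as the paper's: both deduce the observability inequality from \Cref{PropSourceTermL^2} by duality. The paper packages the argument as an application of the abstract equivalence ``range inclusion $\Leftrightarrow$ adjoint norm inequality'' (\cite[Lemma~2.48]{C}): defining $\mathcal{F}_1:(Z_0,S)\mapsto Z(T,\cdot)$ and $\mathcal{F}_2:H^J\mapsto Z(T,\cdot)$, controllability reads $\mathrm{Range}(\mathcal{F}_1)\subset\mathrm{Range}(\mathcal{F}_2)$, and computing the adjoints yields $\mathcal{F}_1^*(\varphi_T)=(\varphi(0,\cdot),\rho_{\mathcal{S}}^2\varphi)$, $\mathcal{F}_2^*(\varphi_T)=(\rho_0^2\varphi_i 1_\omega)_{1\le i\le m}$, which is exactly \eqref{ObsLTT}. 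Your hand-made version---pair $Z$ against $\varphi$, choose $(Z_0,S)$ judiciously, use Cauchy--Schwarz and \eqref{EstiLTT}---is precisely the proof of that abstract lemma specialised to this setting.

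Two remarks. First, it is cleaner to take $Z_0=\varphi(0,\cdot)$ (together with $S=\rho_{\mathcal{S}}^2\varphi$) rather than $Z_0=0$: the duality identity then produces $\norme{\varphi(0,\cdot)}^2+\int_{Q_T}|\rho_{\mathcal{S}}\varphi|^2$ in one stroke, avoiding the separate energy step. Second, your parenthetical claim that ``$\varphi$ inherits the invariance structure of the adjoint dynamics'' is not correct as stated: for $i\ge m+2$ the adjoint equation reads $-\partial_t\varphi_i-d_i\Delta\varphi_i-\Delta\varphi_{i+1}\,1_{i\le n-1}=\sum_{j\le m+1}a_{ji}\varphi_j$, and integrating in space gives $-\frac{d}{dt}\int_\Omega\varphi_i=\sum_{j\le m+1}a_{ji}\int_\Omega\varphi_j$, which is generically nonzero, so $\varphi(t,\cdot)\notin L_{inv}^2$ for $t<T$. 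The fix is to work with the orthogonal projection of $\rho_{\mathcal{S}}^2\varphi$ onto $\mathcal{S}_2$ (equivalently, to read $\mathcal{F}_1^*$ via Riesz in the subspace $L_{inv}^2\times\mathcal{S}_2$); the paper's abstract formulation implicitly does this, and the resulting inequality is what is actually used downstream in the penalised HUM argument.
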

\begin{rmk}
The estimate \eqref{ObsLTT} looks like a global Carleman inequality for \eqref{defsystZAdj}. But the strategy to get this type of estimate comes from the null-controllability theorem in $L_{inv}^2$ for \eqref{defsystZ} with an estimate of the cost and the source term method: \Cref{theolinear} and \Cref{PropSourceTermL^2}. We insist on the fact that we do not know how to prove the null-controllability in $L_{inv}^2$ of \eqref{defsystZ} by the usual global Carleman estimates applied to each equation of \eqref{defsystZAdj} when $m < n-4$ (see \Cref{comments} and in particular \Cref{OpenProb}).
\end{rmk}
In the next section, we take advantage of the strong observability estimate \eqref{ObsLTT} to get more regularity in $L^p$-sense for the control $H^J$.
\section{Construct $L^{\infty}$-controls and the source term method in $L^{\infty}$}
\subsection{Construct $L^{\infty}$-controls: the Penalized Hilbert Uniqueness Method}
The goal of this section is to prove a null-controllability result in $L^{\infty}$ with an estimate of the cost of the control.
\begin{theo}
\label{theolinearLinfty}
There exists $C>0$ such that for every $T>0$, $Z_0 \in L_{inv}^2$, there exists a control $H^J \in L^{\infty}((0,T)\times \Omega)^m$ verifying
\begin{equation}
\label{esticontrolThLinfty}
\norme{H^J}_{L^{\infty}(Q_T)^m} \leq C_T \norme{Z_0}_{L^2(\Omega)^n},\ \text{where}\ C_T =C e^{C/T}.
\end{equation}
and such that the solution $Z$ of \eqref{defsystZ} (see \Cref{SectionLinearization}) satisfies $Z(T,.)=0$.
\end{theo}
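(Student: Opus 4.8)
The plan is to construct the $L^{\infty}$-control by Barbu's penalized Hilbert Uniqueness Method, working on the adjoint system \eqref{defsystZAdj} with an $L^1$-type observation term; the gain of $L^{\infty}$- (rather than $L^2$-) regularity is then built into the very structure of the minimizer, since $L^{\infty}$ is dual to $L^1$. Concretely, for $\varepsilon>0$ and $\varphi_T\in L_{inv}^2$, let $\varphi$ be the solution of \eqref{defsystZAdj} with final datum $\varphi_T$, set $L(\varphi):=\sum_{i=1}^m\int_0^T\int_\omega \rho_0(t)\abs{\varphi_i(t,x)}\,dx\,dt$, and define
\[
J_\varepsilon(\varphi_T):=\tfrac12 L(\varphi)^2+\tfrac{\varepsilon}{2}\norme{\varphi_T}_{L^2(\Omega)^n}^2+\int_\Omega Z_0\cdot\varphi(0,\cdot)\,dx .
\]
The functional $J_\varepsilon$ is continuous, strictly convex and, thanks to the penalization term together with the continuous dependence $\norme{\varphi(0,\cdot)}_{L^2(\Omega)^n}\le C\norme{\varphi_T}_{L^2(\Omega)^n}$, coercive on $L_{inv}^2$; hence it admits a unique minimizer $\varphi_T^\varepsilon$, with associated adjoint state $\varphi^\varepsilon$.

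First I would read off the control from the Euler--Lagrange equation. Writing the optimality condition and using the subdifferential of the absolute value, the natural control is
\[
H_i^{J,\varepsilon}:=L(\varphi^\varepsilon)\,\rho_0(t)\,\xi_i^\varepsilon(t,x)\,1_\omega,\qquad 1\le i\le m,
\]
where $\xi_i^\varepsilon\in\partial\abs{\cdot}(\varphi_i^\varepsilon)$ satisfies $\abs{\xi_i^\varepsilon}\le 1$. The crucial point is that $\abs{H_i^{J,\varepsilon}}\le L(\varphi^\varepsilon)\,\rho_0\,1_\omega$, so that $\norme{H^{J,\varepsilon}/\rho_0}_{L^{\infty}(Q_T)^m}\le L(\varphi^\varepsilon)$ with no further regularity argument: the $L^{\infty}$-bound is immediate once $L(\varphi^\varepsilon)$ is controlled. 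Moreover, the duality between the state equation \eqref{defsystZ} (with control $H^{J,\varepsilon}$) and the adjoint equation \eqref{defsystZAdj} turns the optimality condition into the approximate-controllability identity $Z^\varepsilon(T,\cdot)=-\varepsilon\,\varphi_T^\varepsilon$, and, testing it against $\varphi_T^\varepsilon$ itself, into the energy identity $L(\varphi^\varepsilon)^2+\varepsilon\norme{\varphi_T^\varepsilon}_{L^2(\Omega)^n}^2=-\int_\Omega Z_0\cdot\varphi^\varepsilon(0,\cdot)\,dx$.

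The heart of the argument is then a uniform-in-$\varepsilon$ bound on $L(\varphi^\varepsilon)$, which requires upgrading the $L^2$-weighted observability of \Cref{CorStrongObs} to the $L^1$-weighted form
\[
\norme{\varphi(0,\cdot)}_{L^2(\Omega)^n}^2\le C_T\,L(\varphi)^2,\qquad C_T=Ce^{C/T}.
\]
I expect this conversion to be the main obstacle. It cannot come from Cauchy--Schwarz alone (which points the wrong way); instead one uses that, on the control region, $\varphi$ solves a parabolic system and is therefore smooth in the interior, so a local parabolic regularity estimate bounds a weighted sup-norm of $\varphi$ on a slightly smaller cylinder by its weighted $L^1$-norm on $\omega\times(0,T)$. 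Feeding this into the right-hand side of \eqref{ObsLTT}, via $\rho_0^2\abs{\varphi_i}^2\le\big(\sup\rho_0\abs{\varphi_i}\big)\,\rho_0\abs{\varphi_i}$, yields the displayed $L^1$-observability; here one must check that the weight $\rho_0$ and the admissible exponents $q\in(1,\sqrt2)$, $p>q^2/(2-q^2)$ fixed in \Cref{SourcetermL^2} are compatible with the regularity estimate, which is where the continuity \eqref{rhorhoG} and the behavior of $\rho_0$ near $t=T$ re-enter.

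Finally I would pass to the limit $\varepsilon\to0$. Combining the energy identity with the $L^1$-observability gives $L(\varphi^\varepsilon)\le C_T^{1/2}\norme{Z_0}_{L^2(\Omega)^n}$ and $\varepsilon\norme{\varphi_T^\varepsilon}_{L^2(\Omega)^n}^2\le C_T\norme{Z_0}_{L^2(\Omega)^n}^2$, uniformly in $\varepsilon$. In particular $\norme{H^{J,\varepsilon}/\rho_0}_{L^{\infty}(Q_T)^m}$ is uniformly bounded, so up to a subsequence $H^{J,\varepsilon}/\rho_0\rightharpoonup g$ weakly-$*$ in $L^{\infty}$ and $H^{J,\varepsilon}\to H^J:=\rho_0\,g$; the states converge and, since $Z^\varepsilon(T,\cdot)=-\varepsilon\varphi_T^\varepsilon$ with $\varepsilon\norme{\varphi_T^\varepsilon}_{L^2(\Omega)^n}\le C_T^{1/2}\sqrt{\varepsilon}\,\norme{Z_0}_{L^2(\Omega)^n}\to0$, the limit satisfies $Z(T,\cdot)=0$. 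Lower semicontinuity of the norm then gives $\norme{H^J/\rho_0}_{L^{\infty}(Q_T)^m}\le C_T^{1/2}\norme{Z_0}_{L^2(\Omega)^n}$, and as $\rho_0\in L^{\infty}(0,T)$ this yields $H^J\in L^{\infty}(Q_T)^m$ together with the cost estimate \eqref{esticontrolThLinfty}.
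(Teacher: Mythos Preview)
Your approach is genuinely different from the paper's, and it should work, but the two routes trade difficulties in an instructive way.

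The paper also starts with Barbu's penalized HUM, but it penalizes on the \emph{control} side in the weighted $L^2$ space $\mathcal{H}_2$: minimizing $\tfrac12\int\!\!\int_{(0,T)\times\omega}\rho_0^{-2}|H^J|^2+\tfrac1{2\varepsilon}\|Z(T,\cdot)\|_{L^2}^2$ yields $h_i^{\varepsilon}=\rho_0^{2}\varphi_i^{\varepsilon}\,1_\omega$ and, via \eqref{ObsLTT}, the weighted $L^2$ bound \eqref{inhum}. The $L^\infty$ regularity is then obtained by a \emph{bootstrap}: one sets $\psi^{\varepsilon,r}:=\rho_{\mathcal S,r}\varphi^{\varepsilon}$ for a sequence of weights $\rho_{\mathcal S,r}$ squeezed between $\rho_{\mathcal S}$ and $\rho_0^{2}$, observes that $\psi^{\varepsilon,r}$ solves a backward parabolic system with source $\lesssim|\psi^{\varepsilon,r-1}|$, and iterates maximal $L^{p}$ regularity (\Cref{wplp}) together with the Sobolev embedding (\Cref{injsobo}) along the sequence $p_0=2,\ p_{r+1}=\tfrac{(N+2)p_r}{N+2-2p_r}$ until $p_l=+\infty$. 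This gives $\|\rho_{\mathcal S,l}\varphi^{\varepsilon}\|_{L^\infty}\le C_T\|Z_0\|_{L^2}$, hence $\|h^\varepsilon\|_{L^\infty}\le C_T\|Z_0\|_{L^2}$ since $\rho_0^{2}\le C_T\rho_{\mathcal S,l}$, and one passes to the limit.

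Your route penalizes on the \emph{adjoint} side with an $L^1$-observation cost $L(\varphi)$, so that $L^\infty$ for the control is built in and the whole burden shifts to proving the $L^1$-weighted observability $\|\varphi(0,\cdot)\|_{L^2}^2\le C_T\,L(\varphi)^2$. Your sketch for this step (apply \eqref{ObsLTT} on a smaller $\omega_0\Subset\omega$, then use interior parabolic $L^1\!\to\!L^\infty$ smoothing to bound $\sup_{\omega_0}\rho_0|\varphi_i|$ by $\int_0^T\!\!\int_\omega\rho_0|\varphi_i|$) is the right idea and does go through for the constant-coefficient system \eqref{defsystZAdj}; but note that carrying it out carefully near $t=T$ amounts to essentially the same iteration the paper performs explicitly, since the local $L^1\!\to\!L^\infty$ estimate for a parabolic \emph{system} (no maximum principle) is itself obtained by repeated $L^p$ regularity on nested cylinders, and the time-weight forces you to work on dyadic layers $[T_{k+1},T_k]$ where $\rho_0$ is comparable to a constant. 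In short: what you package as ``local parabolic regularity'' is precisely the content of the paper's bootstrap \S7.1.3, relocated from the global weighted adjoint to local cylinders.

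A small bonus of your formulation is that it yields directly $\|H^J/\rho_0\|_{L^\infty}\le C_T\|Z_0\|_{L^2}$, whereas the paper first proves \eqref{esticontrolThLinfty} and then reapplies the source-term method in $L^\infty$ to recover the weighted estimate (\Cref{ControlPropEstiLTTLinfty}).
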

From now and until the end of the section, we will denote by $C_T$ various positive constants which can change from line to line and such that $C_T \leq C e^{C/T}$.\\

\indent In the next four parts, we perform the usual Penalized Hilbert Uniqueness Method, introduced for the first time by Viorel Barbu in \cite{B}. The idea is the following one: it is a well-known fact that the optimal control $H^J \in L^2((0,T)\times \Omega)^m$, i.e., the minimal-norm element in $L^2$, which steers the solution $Z$ of \eqref{defsystZ} to $0$ in time $T$ can be expressed as a function of a solution of the adjoint system \eqref{defsystZAdj} (see \cite[Section 1.4]{C} for more details in the context of linear finite dimensional controlled systems). By using the strong observability inequality \eqref{ObsLTT}, we will use this link by considering a penalized problem in $\mathcal{H}_2 \subset L^2((0,T)\times \Omega)^m$: the behavior at time $t=T$ of the weight $\rho_0$ will be the key point to produce more regular controls in $L^p$-sense.
\subsubsection{The beginning of the Penalized Hilbert Uniqueness Method}
Let us fix $Z_0 \in L_{inv}^2$.\\
\indent We define $P_{\varepsilon} : \mathcal{H}_2 \rightarrow \R^{+}$, by, for every $H^J \in \mathcal{H}_2$,
\begin{equation}
P_{\varepsilon}(H^J) := \frac{1}{2} \int\int_{(0,T)\times\omega} \rho_0^{-2}(t) |H^J(t,x)|^2dxdt + \frac{1}{2\varepsilon} \norme{Z(T,.)}_{L^2(\Omega)^n}^2,
\end{equation}
where $Z$ is the solution to the Cauchy problem \eqref{defsystZ} (see \Cref{SectionLinearization}) associated to the control $H^J$.\\
\indent 
The functional $P_{\varepsilon}$ is a $C^1$, coercive, strictly convex functional on the Hilbert space $\mathcal{H}_2$, then $P_{\varepsilon}$ has a unique minimum $H^{J,\varepsilon} \in \mathcal{H}_2$. Let $Z^{\varepsilon}$ be the solution to the Cauchy problem \eqref{defsystZ} with control $H^{J,\varepsilon}$ and initial data $Z_0$.\\
\indent The Euler-Lagrange equation gives
\begin{equation}
\forall H^J \in \mathcal{H}_2,\ \int\int_{(0,T)\times\omega} \rho_0^{-2}H^{J,\varepsilon} . H^J dxdt + \frac{1}{\varepsilon} \int_{\Omega}Z^{\varepsilon}(T,x).Z(T,x)dx = 0,
\label{eulerla}
\end{equation}
where $Z$ is the solution to the Cauchy problem \eqref{defsystZ} associated to the control $H^J$ and initial data $Z_0 = 0$. \\
\indent We introduce $\varphi^{\varepsilon}$ the solution to the adjoint problem \eqref{defsystZAdj} with final condition $\varphi^{\varepsilon}(T,.) = -\frac{1}{\varepsilon}Z^{\varepsilon}(T,.)$. A duality argument between $Z$ and $\varphi^{\varepsilon}$ gives
\begin{equation}
-\frac{1}{\varepsilon} \int_{\Omega} Z(T,x).Z^{\varepsilon}(T,x)dx =\int_{\Omega} Z(T,x).\varphi^{\varepsilon}(T,x)dx 
= \int\int_{(0,T)\times\omega} H^J .\varphi^{\varepsilon}.
\label{dual1}
\end{equation}
Then, we deduce from \eqref{eulerla} and \eqref{dual1} that 
\[ \forall H^J \in \mathcal{H}_2,\ \int\int_{(0,T)\times\omega}  \rho_0^{-2} H^{J,\varepsilon} . H^J  = \int\int_{(0,T)\times\omega} \varphi^{\varepsilon}.H^J .\]
Consequently, we have
\begin{equation}
\forall i \in \{1,\dots, m\},\ h_i^{\varepsilon} = \rho_0^{2} \varphi_i^{\varepsilon} 1_{\omega}.
\label{control-adj}
\end{equation}
Another duality argument applied between $Z^{\varepsilon}$ and $\varphi^{\varepsilon}$ together with \eqref{control-adj} gives
\begin{align}
- \frac{1}{\varepsilon} \int_{\Omega}|Z^{\varepsilon}(T,x)|^2dx &=\int_{\Omega} Z^{\varepsilon}(T,x).\varphi^{\varepsilon}(T,x)dx \notag\\
 &=  \int_{\Omega} Z_0(x).\varphi^{\varepsilon}(0,x)dx+ \int\int_{(0,T)\times\omega} H^{J,\varepsilon}.\varphi^{\varepsilon},\notag
 \end{align}
which yields
\begin{equation}
\label{dual2bis}
-\frac{1}{\varepsilon} \norme{Z^{\varepsilon}(T,.)}_{L^2(\Omega)^n}^2= \int_{\Omega} Z_0(x).\varphi^{\varepsilon}(0,x)dx + \sum\limits_{i=1}^m \int\int_{(0,T)\times\omega} |\rho_0 \varphi_i^{\varepsilon}|^{2}.
\end{equation}
By Young's inequality and the observability estimate \eqref{ObsLTT} applied to $\varphi^{\varepsilon}$, for $\delta >0$, we have:
\begin{align}
\label{YoungPHum}
&\left|\int_{\Omega} Z_0(x).\varphi^{\varepsilon}(0,x)dx\right| \\
&\leq \delta \norme{\varphi^{\varepsilon}(0,.)}_{L^2(\Omega)^n}^2  + C_{\delta} \norme{Z_0}_{L^2(\Omega)^n}\notag\\
& \leq \delta C_T \left(\sum\limits_{i=1}^m \int\int_{(0,T)\times\omega} |\rho_0(t) \varphi_i^{\varepsilon}(t,x)|^{2} dx dt\right) + C_{\delta} \norme{Z_0}_{L^2(\Omega)^n} .\notag
\end{align}
Then, by using \eqref{control-adj}, \eqref{dual2bis}, \eqref{YoungPHum} and by taking $\delta$ sufficiently small, we get
\begin{equation}
\frac{1}{\varepsilon} \norme{Z^{\varepsilon}(T,.)}_{L^2(\Omega)^n}^2 + \frac{1}{2}\norme{\rho_0^{-1} H^{J,\varepsilon}}_{L^2((0,T)\times\omega)^n}^2 \leq C_T \norme{Z_0}_{L^2(\Omega)^n}^2.
\label{inhum}
\end{equation}
\begin{rmk}
The estimate \eqref{inhum} yields \Cref{PropSourceTermL^2} for $S=0$ by letting $\varepsilon \rightarrow 0$. We remark that we have only used the term $\norme{\varphi(0,.)}_{L^2(\Omega)^n}^2$ in the left hand side of \eqref{ObsLTT}. The second term in the left hand side of \eqref{ObsLTT} enables to get more regularity (in $L^p$-sense) for the control $H^J$ (see \Cref{BootMeth} below).
\end{rmk}

\subsubsection{Maximal regularity theorems and Sobolev embeddings}
In this part, we recall a maximal regularity theorem in $L^p$ ($1 < p < +\infty$) for parabolic systems and an embedding result for Sobolev spaces.\\
\indent We introduce the following spaces: for every $r\in [1,+\infty]$,
\begin{equation*}
\label{defspaceslr}
W_{Ne}^{2,r}(\Omega) :=\left\{u \in W^{2,r}(\Omega)\ ;\ \frac{\partial u}{\partial \nu} = 0 \right\},\ X_r := L^r(0,T;W_{Ne}^{2,r}(\Omega))\cap W^{1,r}(0,T;L^{r}(\Omega)).
\end{equation*}
We have the following maximal regularity theorem.
\begin{prop}\cite[Theorem 2.1]{DHP}\label{wplp}\\
Let $1 < r < +\infty$, $k \in \N^*$, $D\in \mathcal{M}_k(\R)$ such that $Sp(D) \subset (0,+\infty)$, $A \in \mathcal{M}_{k}(\R)$ and $S \in L^r(Q_T)^k$. The following Cauchy problem admits a unique solution $U \in X_r^k $
\[
\left\{
\begin{array}{l l}
\partial_t U - D \Delta U= A U + S(t,x)&\mathrm{in}\ (0,T)\times\Omega,\\
\frac{\partial U}{\partial \nu} = 0 &\mathrm{on}\ (0,T)\times\partial\Omega,\\
U(0,.)=0 &\mathrm{in}\  \Omega.
\end{array}
\right.
\]
Moreover, there exists $ C>0$ independent of $S$ such that
\[ \norme{U}_{X_r^k} \leq C \norme{S}_{L^r(Q_T)^k}.\]
\end{prop}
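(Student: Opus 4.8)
The plan is to deduce the result from the abstract theory of maximal $L^p$-regularity for sectorial operators, reducing the matrix problem to the scalar Neumann heat equation. First I would dispose of the zeroth-order term. The map $U \mapsto AU$ is bounded on $L^r(Q_T)^k$, so it is a lower-order perturbation that does not affect maximal regularity: granted the solvability and estimate for the principal problem $\partial_t U - D\Delta U = S$, Duhamel's formula combined with a Neumann-series (contraction) argument on the finite interval $(0,T)$ transfers them to the full operator $\partial_t U - D\Delta U - AU$. Hence it suffices to treat $A = 0$.

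Next I would reduce the matrix coupling to a scalar problem. Since $Sp(D) \subset (0,+\infty)$, every eigenvalue of $D$ is a positive real, so $D = PJP^{-1}$ with $J$ a real upper-triangular (Jordan) matrix whose diagonal entries $d_1, \ldots, d_k$ are positive. Setting $V := P^{-1}U$ and $\tilde S := P^{-1}S$, the system becomes the triangular cascade $\partial_t v_i - d_i \Delta v_i = \sum_{j>i} J_{ij}\Delta v_j + \tilde s_i$. One solves it from $i=k$ downward: the $k$-th equation is scalar with source $\tilde s_k$; once $v_k \in X_r$ is obtained, $\Delta v_k \in L^r(Q_T)$ becomes a legitimate source for the $(k-1)$-th equation, and so on up to $i=1$. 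Each component then inherits the estimate from the preceding ones, so everything reduces to the scalar Neumann heat problem $\partial_t u - d\Delta u = s$, $\partial u/\partial\nu = 0$, $u(0,\cdot)=0$, with $d>0$.

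The analytic core is establishing maximal $L^p$-regularity for this scalar problem. The Neumann Laplacian $-d\Delta$ with domain $W_{Ne}^{2,r}(\Omega)$ generates a bounded analytic semigroup on $L^r(\Omega)$ for every $1<r<+\infty$, where the $C^2$-regularity of $\Omega$ and the fact that the Neumann boundary operator satisfies the Lopatinskii–Shapiro (complementing) condition for the elliptic symbol enter. The decisive point is that on the UMD space $L^r(\Omega)$ this operator is $R$-sectorial — equivalently it admits a bounded $H^\infty$-calculus of angle $<\pi/2$ — whence maximal regularity on $(0,T)$ follows from Weis's operator-valued Fourier-multiplier characterization (or, through bounded imaginary powers, from the Dore–Venni theorem). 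This is precisely what the cited theorem of Denk, Hieber and Prüss packages, and it in fact treats the parameter-elliptic matrix operator $\partial_t - D\Delta$ directly, so in practice one only verifies its ellipticity hypotheses for $Sp(D)\subset(0,+\infty)$ with Neumann conditions and invokes it, bypassing the triangularization.

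Finally, uniqueness in $X_r^k$ follows by applying the a priori estimate to the difference of two solutions, for which $S=0$, forcing $U=0$; alternatively one descends to the $L^2$-theory of \Cref{wpl2linfty} via the embedding $X_r \hookrightarrow W_T$ on the bounded interval. I expect the third step to be the main obstacle: the $R$-sectoriality and bounded $H^\infty$-calculus of the Neumann Laplacian on $L^r$ rest on genuine harmonic analysis (UMD-valued multiplier theory), which is exactly why one appeals to \cite{DHP} rather than reproving it here.
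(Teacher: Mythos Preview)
The paper does not prove this proposition: it is stated with the attribution \cite[Theorem 2.1]{DHP} and used as a black box, with no argument given. Your outline is a correct account of what lies behind that citation---reduction of the zeroth-order term by perturbation, triangularization via the spectral hypothesis $Sp(D)\subset(0,+\infty)$, and ultimately maximal $L^r$-regularity of the Neumann Laplacian on the UMD space $L^r(\Omega)$ through $R$-sectoriality---and you correctly observe at the end that the Denk--Hieber--Pr\"uss machinery applies directly to the parameter-elliptic matrix operator $\partial_t - D\Delta$ with Neumann conditions, which is exactly how the paper uses it.
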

We have the following embedding result for Sobolev spaces.
\begin{prop}\label{injsobo}\cite[Theorem 1.4.1]{WYW}\\
Let $r \in [1,+\infty[$, we have
\[ 
X_r \hookrightarrow 
\left\{
\begin{array}{c l}
L^{\frac{(N+2)r}{N+2-2r}}(Q_T) & \mathrm{if}\  r < \frac{N+2}{2},\\
L^{2 r}(Q_T) & \mathrm{if}\  r = \frac{N+2}{2},\\
L^{\infty}(Q_T) & \mathrm{if}\  r > \frac{N+2}{2}.
\end{array}
\right.
\]
\end{prop}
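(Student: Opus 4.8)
The statement is the parabolic counterpart of the classical Sobolev embedding theorem, and the guiding principle is that under the parabolic scaling $(t,x)\mapsto(\lambda^2 t,\lambda x)$ one time derivative costs as much as two space derivatives, so that $Q_T\subset\R^{N+1}$ behaves like a domain of effective dimension $N+2$. A function in $X_r$ carries two spatial derivatives and one time derivative in $L^r$, hence morally ``two parabolic derivatives'', and the three regimes of the statement are exactly the subcritical, critical and supercritical cases of a Sobolev embedding in dimension $N+2$, governed by the conjugate relation $\frac1q=\frac1r-\frac{2}{N+2}$. The plan is to make this heuristic rigorous by reducing to a whole–space potential estimate.

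First I would note that the Neumann constraint defining $W^{2,r}_{Ne}(\Omega)$ only cuts out a subspace and is irrelevant for the embedding: it suffices to treat $L^r(0,T;W^{2,r}(\Omega))\cap W^{1,r}(0,T;L^r(\Omega))$. Next I would reduce to $\R^{N+1}$. Since $\Omega$ is bounded of class $C^2$, a Stein total extension operator acting only in the space variable extends $W^{2,r}(\Omega)$ to $W^{2,r}(\R^N)$ while being simultaneously bounded on $L^r$, so it maps $X_r(Q_T)$ into $L^r(0,T;W^{2,r}(\R^N))\cap W^{1,r}(0,T;L^r(\R^N))$ without destroying the time regularity. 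Then I would extend in time: functions in $W^{1,r}(0,T;L^r)$ are continuous into $L^r$, so an even reflection across $t=0$ and $t=T$, followed by a smooth temporal cutoff $\chi$, produces $\tilde u$ on $\R^{N+1}$, compactly supported in time, with $\norme{\tilde u}_{X_r(\R^{N+1})}\le C\norme{u}_{X_r(Q_T)}$ (the reflection only flips the sign of $\partial_t$, keeping it in $L^r$, and leaves the spatial derivatives untouched).

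On $\R^{N+1}$ I would set $g:=\partial_t\tilde u-\Delta\tilde u\in L^r(\R^{N+1})$, with $\norme{g}_{L^r}\le C\norme{u}_{X_r(Q_T)}$. Because $\tilde u$ vanishes for large negative times, Duhamel's formula yields $\tilde u(t,x)=\int_{-\infty}^t\int_{\R^N}G(t-s,x-y)\,g(s,y)\,dy\,ds$, where $G$ is the Gaussian heat kernel. The kernel $(s,y)\mapsto G(s,y)\mathbf 1_{s>0}$ is homogeneous of degree $-N=-(Q-2)$ under the parabolic dilation and is pointwise dominated by the parabolic Riesz kernel $\rho^{-(Q-2)}$, with $\rho(s,y)=(s+\abs{y}^2)^{1/2}$ and $Q=N+2$. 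A parabolic Hardy–Littlewood–Sobolev inequality then gives $\norme{\tilde u}_{L^q(\R^{N+1})}\le C\norme{g}_{L^r}$ with $q=\frac{(N+2)r}{N+2-2r}$ in the subcritical range $r<\frac{N+2}{2}$; restriction to $Q_T$ delivers the first case. In the critical case $r=\frac{N+2}{2}$ the same representation places $\tilde u$ in every $L^q$ with $q<\infty$, in particular $L^{2r}$, and in the supercritical case $r>\frac{N+2}{2}$ the potential has an integrable singularity against $L^{r'}$, so Hölder's inequality produces an $L^\infty$ (indeed parabolic–Hölder) bound.

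The hard part will be twofold. The first delicate point is executing the time extension so that the $W^{2,r}$-in-space and the $W^{1,r}$-in-time regularity survive simultaneously; this is where the continuity of the $L^r$-valued trace is essential. The second, and analytically heavier, obstacle is proving the parabolic Hardy–Littlewood–Sobolev inequality with the correct critical exponents, and in particular treating the endpoint $r=1$, where the strong-type potential estimate fails and one must instead iterate a first-order parabolic Gagliardo–Nirenberg–Sobolev inequality rather than rely on the Riesz potential. Once the whole-space anisotropic embedding is established in these three regimes, the conclusion for $X_r$ on $Q_T$ follows immediately by restriction.
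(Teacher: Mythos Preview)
The paper does not prove this proposition: it is quoted verbatim from \cite[Theorem 1.4.1]{WYW} and used as a black box in the bootstrap argument of Section~7.1.3. There is therefore no ``paper's own proof'' to compare against.

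That said, your outline is a sound and standard route to the result. The reduction to $\R^{N+1}$ via a Stein spatial extension followed by an even-in-time reflection with cutoff is the usual device; the representation $\tilde u = G * g$ with $g=\partial_t\tilde u-\Delta\tilde u$ and the pointwise domination of the heat kernel by the parabolic Riesz kernel of homogeneous degree $-(Q-2)$ with $Q=N+2$ are correct, and the three regimes then follow from the anisotropic Hardy--Littlewood--Sobolev inequality (subcritical), its arbitrary-exponent consequence (critical), and H\"older against the integrable singularity (supercritical). You have also correctly flagged the genuine obstruction at $r=1$, where strong-type HLS fails and one must instead argue via a parabolic Gagliardo--Nirenberg step; this is precisely how the endpoint is handled in the references you allude to. Nothing in your sketch is wrong, and the ``hard parts'' you isolate are indeed the only places requiring care.
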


\subsubsection{Bootstrap method}\label{BootMeth}
In the next two parts, we will use the key identity between the control $H^{J, \varepsilon}$ and the solution of the adjoint system $\varphi^{\varepsilon}$, i.e, \eqref{control-adj} in order to deduce $L^p$-regularity for $H^{J, \varepsilon}$ from $L^p$-regularity for $\varphi^{\varepsilon}$. This kind of regularity will come from 
the application of successive $L^p$-parabolic regularity theorems stated in \Cref{wplp} to a modification of $\varphi^{\varepsilon}$ called $\psi^{\varepsilon,r}$ (see a precise definition in \eqref{Defpsir} below) which is bounded from below by $\rho_0^2 \varphi$. The beginning of this bootstrap argument is the strong observability inequality \eqref{ObsLTT}. Finally, we will pass to the limit ($\varepsilon \rightarrow 0$) in $\frac{1}{\varepsilon} \norme{Z^{\varepsilon}(T,.)}_{L^2(\Omega)^n}^2 \leq C_T \norme{Z_0}_{L^2(\Omega)^n}^2$ coming from \eqref{inhum} and $ \norme{H^{J,\varepsilon}}_{L^{\infty}(Q_T)}
 \leq C _T\norme{Z_0}_{L^2(\Omega)^n}$ coming from \eqref{estinftyhaux} (see below). \\
\indent By using \Cref{rmkp>q^2}, we introduce the positive real number
\begin{equation}
\label{deflambda}
\gamma := 2p - (1+p)q^{2} > 0.
\end{equation}
Let us define a sequence of increasing positive real numbers $(\gamma_r)_{r \in \N}$ such that $\lim\limits_{r \rightarrow + \infty} \gamma_r =\gamma$, where $\gamma$ is defined in \eqref{deflambda}.\\
\indent We introduce for every $r \in \N$,
\begin{equation}
\label{defrhoGr}
\rho_{\mathcal{S},r}(t) = M^{-1-p} \exp\left(-\frac{\left((1+p)q^{2} +\gamma_r\right)M}{(q-1)(T-t)}\right).
\end{equation}
Then, we have from \eqref{defrho0bis}, for every $r \in \N$,
\begin{equation}
\label{Comprhorhor}
\rho_0^{2} \leq C_T \rho_{\mathcal{S},r}.
\end{equation}
We remark that we have for every $r \in \N$,
\begin{equation}
\label{rhoGrr+1}
|\rho_{\mathcal{S},r+1}'(t)| \leq C_{T,r} \rho_{\mathcal{S},r}(t).
\end{equation}

\indent We define for every $r \in \N$,
\begin{equation}
\psi^{\varepsilon,r}(t,x):=\rho_{\mathcal{S},r}(t)\varphi^{\varepsilon}(t,x).
\label{Defpsir}
\end{equation}
From \eqref{defsystZAdj}, \eqref{defrhoGr} and \eqref{Defpsir}, we have for every $r \in \N^{*}$,
\begin{equation}
\left\{
\begin{array}{l l}
-\partial_t{\psi^{\varepsilon,r}} - D_J^{\text{tr}}\Delta\psi^{\varepsilon,r}  = A_J^{\text{tr}}\psi^{\varepsilon,r}  - \rho_{\mathcal{S},r}'(t) \varphi^{\varepsilon} &\mathrm{in}\ (0,T)\times\Omega,\\
\frac{\partial\psi^{\varepsilon,r}}{\partial \nu} = 0 &\mathrm{on}\ (0,T)\times\partial\Omega,\\
\psi^{\varepsilon,r}(T,.)=0 &\mathrm{in}\ \Omega.
\end{array}
\right.
\label{systpsi}
\end{equation}
By using \eqref{rhoGrr+1}, we remark that
\begin{equation}
| - \rho_{\mathcal{S},r}'(t) \varphi^{\varepsilon}| \leq C_T|\psi^{\varepsilon,r-1}|.
\label{insource}
\end{equation}
\indent Let $(p_r)_{r \in \N}$ be the following sequence defined by induction
\begin{equation}
\label{defp0}
p_0 = 2, 
\end{equation}
\begin{equation}
\label{defpr}
p_{r+1} :=
\left\{
\begin{array}{c l}
\frac{(N+2)p_{r}}{N+2-2p_{r}} & \mathrm{if}\  p_{r} < \frac{N+2}{2},\\
2 p_{r} & \mathrm{if}\  p_{r} = \frac{N+2}{2},\\
+\infty & \mathrm{if}\  p_{r} > \frac{N+2}{2}.\\
\end{array}
\right.
\end{equation}
There exists $l\in \N^{*}$ such that 
\begin{equation}
\forall r \geq l,\ p_r = + \infty.
\label{pinfty}
\end{equation}
We show, by induction, that for every $0 \leq r  \leq l$, we have
\begin{equation}
\psi^{\varepsilon,r} \in L^{p_r}(Q_T)^n\ \mathrm{and} \norme{\psi^{\varepsilon,r}}_{L^{p_r}(Q_T)^n} \leq C_T \norme{Z_0}_{L^2(\Omega)^n}.
\label{estsolu}
\end{equation}
\indent The case $r=0$ can be deduced from the fact that $\gamma_0 > 0$ and the observability estimate \eqref{ObsLTT} ($p_0 =2$ by \eqref{defp0}).\\
\indent Let $r \in \N^{*}$. We assume that 
\begin{equation}
\psi^{\varepsilon,r-1} \in L^{p_{r-1}}(Q_T)^n\ \mathrm{and} \norme{\psi^{\varepsilon,r-1}}_{L^{p_{r-1}}(Q_T)^n} \leq C_T \norme{Z_0}_{L^2(\Omega)^n}.
\label{hr}
\end{equation}
Then, from \eqref{systpsi}, \eqref{insource}, \eqref{hr} and from the maximal regularity theorem: \Cref{wplp} applied with $p_{r-1} \in (1,+\infty)$, we get
\begin{equation}
\psi^{\varepsilon,r} \in X_{p_{r-1}}^n\ \mathrm{and} \norme{\psi^{\varepsilon,r}}_{X_{p_{r-1}}^r} \leq C_T \norme{Z_0}_{L^2(\Omega)^n}.
\label{regpar}
\end{equation}
Moreover, by the Sobolev embedding: \Cref{injsobo} and \eqref{defpr}, we have
\begin{equation*}
\psi^{\varepsilon,r} \in L^{p_r}(Q_T)^n\ \mathrm{and} \norme{\psi^{\varepsilon,r}}_{L^{p_r}(Q_T)^n} \leq C_T \norme{Z_0}_{L^2(\Omega)^n}.
\end{equation*}
This concludes the induction.
\subsubsection{The end of the Penalized Hilbert Uniqueness Method}
Now, by applying consecutively \eqref{pinfty} ($p_l = +\infty$), \eqref{control-adj}, \eqref{Comprhorhor} and \eqref{estsolu}, we have for every $i \in \{1,\dots,m\}$,
\begin{equation}\small
\norme{h_i^{\varepsilon}}_{L^{\infty}(Q_T)}
=\norme{\rho_0^{2}\varphi_i^{\varepsilon}}_{L^{p_l}(Q_T)}
\leq C_T \norme{\rho_{S,l}\varphi_i^{\varepsilon}}_{L^{p_l}(Q_T)}
\leq C_T \norme{\psi_i^{\varepsilon,l}}_{L^{p_l}(Q_T)}
 \leq C _T\norme{Z_0}_{L^2(\Omega)^n}.
\label{estinftyhaux}
\end{equation}
\normalsize
Therefore, from \eqref{estinftyhaux}, $(H^{J,\varepsilon})_{\varepsilon}$ is bounded in $L^{\infty}(Q_T)^m$, then up to a subsequence, we can assume that there exists $H^J \in L^{\infty}(Q_T)^m$ such that
\begin{equation}
H^{J,\varepsilon} \underset{\varepsilon \rightarrow 0}{\rightharpoonup}^* H^J\ \text{in}\ L^{\infty}(Q_T)^m,
\label{convhjweakst}
\end{equation} 
and
\begin{equation}
\norme{H^{J}}_{L^{\infty}(Q_T)^m}\leq C_T \norme{Z_0}_{L^2(\Omega)^n}.
\label{BoundH^JLim}
\end{equation}
From \eqref{estinftyhaux}, \Cref{wpl2linfty} applied to \eqref{defsystZ} satisfied by $Z^{\varepsilon}$, we obtain
\begin{equation}
\norme{Z^{\varepsilon}}_{W_T^n} \leq C_T \norme{Z_0}_{L^2(\Omega)^n}.
\label{uneborneenY}
\end{equation}
So, from \eqref{uneborneenY}, up to a subsequence, we can suppose that there exists $Z \in W_T^n$ such that
\begin{equation}
Z^{\varepsilon} \underset{\varepsilon \rightarrow 0}{\rightharpoonup} Z\ \text{in}\ L^2(0,T;H^1(\Omega)^n),\ \partial_t Z^{\varepsilon} \underset{\varepsilon \rightarrow 0}{\rightharpoonup} \partial_tZ\ \text{in}\ L^2(0,T;(H^1(\Omega))'^n),
\label{wkVeps}
\end{equation}
and from \eqref{injclassique},
\begin{equation}
Z^{\varepsilon}(0,.) \underset{\varepsilon \rightarrow 0}{\rightharpoonup} Z(0,.)\ \text{in}\ L^2(\Omega)^n,\ Z^{\varepsilon}(T,.) \underset{\varepsilon \rightarrow 0}{\rightharpoonup} Z(T,.)\ \text{in}\  L^2(\Omega)^n.
\end{equation}
Then, as we have $Z^{\varepsilon}(0,.)=Z_0$ and $Z^{\varepsilon}(T,.) \rightarrow 0$ from \eqref{inhum}, we deduce that 
\begin{equation}
Z(0,.) = Z_0,\  \mathrm{and}\  Z(T,.) = 0.
\label{cicf}
\end{equation}
By letting $\varepsilon \rightarrow 0$, we have from \eqref{wkVeps}, \eqref{convhjweakst} and \eqref{cicf} that
\begin{equation}
\label{defsystZLimite}
\left\{
\begin{array}{l l}
 \partial_t Z - D_J \Delta Z = A_J Z  + H^{J} 1_{\omega}&\mathrm{in}\ (0,T)\times\Omega,\\
\frac{\partial Z}{\partial \nu} = 0 &\mathrm{on}\ (0,T)\times\partial\Omega,\\
(Z(0,.),Z(T,.))=(Z_{0},0) &\mathrm{in}\  \Omega.
\end{array}
\right.
\end{equation}
This ends the proof of \Cref{theolinearLinfty} by using \eqref{BoundH^JLim} and \eqref{defsystZLimite}. 
\subsection{The come back to the source term method in $L^{\infty}$}
The goal of this section is to apply the source term method in $L^{\infty}$ thanks to the null-controllability result in $L^{\infty}$: \Cref{theolinearLinfty}.\\
\indent To simplify the notations, we assume that the control cost in $L^{\infty}$ of \Cref{theolinearLinfty} satisfies: $C_T \leq M e^{M/T}$ where $M$ is already defined at the beginning of \Cref{SourcetermL^2}.\\
\indent From \Cref{MethodLTTLinfty} with $r=+\infty$ proved in \Cref{AppSourceTermMethod}, we deduce the following null-controllability result for \eqref{defsystZSourceS} (see \Cref{SourcetermL^2}) in $L^{\infty}$.
\begin{prop}\label{ControlPropEstiLTTLinfty}
For every $S \in \mathcal{S}_{\infty}$ and $Z_0 \in L_{inv}^{\infty}$, there exists $H^J \in \mathcal{H}_{\infty}$, such that the solution $Z$ of \eqref{defsystZSourceS} satisfies $Z \in \mathcal{Z}_{\infty}$. Furthermore, there exists $C >0$, not depending on $S$ and $Z_0$, such that
\begin{equation}
\label{EstiLTTLinfty}
\norme{Z/\rho_0}_{L^{\infty}([0,T];L^{\infty}(\Omega)^n)} + \norme{H^J}_{\mathcal{H}_{\infty}} \leq C \left( \norme{Z_0}_{L^{\infty}(\Omega)^n} + \norme{S}_{\mathcal{S}_{\infty}}\right).
\end{equation}
In particular, since $\rho_0$ is a continuous function satisfying $\rho_0(T)=0$, the above relation \eqref{EstiLTTLinfty} yields $Z(T,.)=0$.
\end{prop}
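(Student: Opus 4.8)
\emph{Strategy.} The plan is to obtain \Cref{ControlPropEstiLTTLinfty} as the $r=\infty$ instance of the abstract source term method \Cref{MethodLTTLinfty}, exactly as its $r=2$ instance yields \Cref{PropSourceTermL^2}; the only genuinely new input is the $L^{\infty}$ null-controllability with exponential cost provided by \Cref{theolinearLinfty}, combined with the $L^{\infty}$ parabolic estimate \eqref{estl2faiblelinfty}. Concretely, I would run the Liu--Takahashi--Tucsnak time-splitting: with $q\in(1,\sqrt2)$ and $p>q^{2}/(2-q^{2})$ fixed as in \eqref{defrho0bis}--\eqref{defrhoG}, partition $[0,T)$ by $a_k:=T(1-q^{-k})$, so that on the $k$-th window $[a_k,a_{k+1}]$ one has $T-a_k=Tq^{-k}$ and length $T_k=\frac{q-1}{q}Tq^{-k}$, whence $1/T_k=\frac{q}{q-1}(T-a_k)^{-1}$. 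This is precisely the scaling for which the control cost $Me^{M/T_k}$ from \Cref{theolinearLinfty} matches inverse powers of the weights $\rho_0,\rho_{\mathcal S}$ evaluated at the window endpoints (recall $M$ is chosen common to the cost and to the weights).

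\emph{Construction.} The scheme is inductive. Starting from $Z(a_0)=Z_0$, on each window I would apply \Cref{theolinearLinfty} to the \emph{source-free} system with initial datum $Z(a_k)\in L_{inv}^2$, producing a control $H^J_k$, supported in $[a_k,a_{k+1}]\times\omega$, steering the free evolution to $0$ at $a_{k+1}$ and obeying $\norme{H^J_k}_{L^{\infty}}\le Me^{M/T_k}\norme{Z(a_k)}_{L^2(\Omega)^n}$. Superposing the uncontrolled contribution of $S$ on the same window, the solution of \eqref{defsystZSourceS} satisfies $Z(a_{k+1})=\int_{a_k}^{a_{k+1}}e^{(a_{k+1}-s)(D_J\Delta+A_J)}S(s)\,ds$, i.e.\ the carried state entering the next window is exactly the parabolic contribution of $S$ on the current one; integrating the equations of rows $m+2,\dots,n$ over $\Omega$ shows this carried state stays in $L_{inv}^2$. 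Since $S\in\mathcal S_{\infty}$ means $S/\rho_{\mathcal S}\in L^{\infty}$ and $\rho_{\mathcal S}$ is decreasing, the Duhamel representation together with \eqref{estl2faiblelinfty} (on $[a_k,a_{k+1}]$, zero initial datum) bounds this carried state in both $L^2$ and $L^{\infty}$ by $C\,T_k\,\rho_{\mathcal S}(a_k)\norme{S}_{\mathcal S_{\infty}}$; the $L^{\infty}$ bound is available here precisely because the carried state is a smoothing of an $L^{\infty}$ source, not a raw $L^{\infty}$ datum.

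\emph{Summation and conclusion.} It remains to sum. Inserting the carried-state bound into the cost of $H^J_k$ and comparing with $\rho_0(a_{k+1})$ (which lower-bounds $\rho_0$ on the window since $\rho_0$ decreases), the three exponential factors $e^{M/T_k}$, $\rho_{\mathcal S}(a_{k-1})$ and $\rho_0(a_{k+1})^{-1}$ cancel exactly, leaving only the prefactors $M,M^{-1-p},M^{p}$ (whose product is $O(1)$) and the factor $T_{k-1}$. As the windows are disjointly supported in time, $\norme{H^J/\rho_0}_{L^{\infty}}=\sup_k\norme{H^J_k/\rho_0}_{L^{\infty}}\le C\norme{S}_{\mathcal S_{\infty}}$, so $H^J:=\sum_k H^J_k\in\mathcal H_{\infty}$; a second use of \eqref{estl2faiblelinfty} on each window, now with the $L^{\infty}$ bounds on $H^J_k$, the carried state and $S$, controls $\norme{Z/\rho_0}_{L^{\infty}([0,T];L^{\infty}(\Omega)^n)}$ and gives $Z\in\mathcal Z_{\infty}$; the contribution of $Z_0$ on the first window is absorbed into the $\norme{Z_0}_{L^{\infty}(\Omega)^n}$ term of \eqref{EstiLTTLinfty}. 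Since $\rho_0$ is continuous with $\rho_0(T)=0$, membership $Z\in\mathcal Z_{\infty}$ forces $Z(T,.)=0$.

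\emph{Main obstacle.} The delicate point is that the telescoping of cost against weights is \emph{marginal}: the exponents cancel identically because the weight rates $p$ and $(1+p)q^{2}$ in \eqref{defrho0bis}--\eqref{defrhoG} are tuned to the cost rate $M$ and the ratio $q$, so boundedness of the series is recovered only from the $O(1)$ prefactors and the geometric factor $T_{k-1}\to0$ (the sharper constraints $q\in(1,\sqrt2)$, $p>q^{2}/(2-q^{2})$, i.e.\ $\gamma=2p-(1+p)q^{2}>0$, are \emph{not} needed here but are reserved for the nonlinear fixed point, where they force $\rho_0^{2}/\rho_{\mathcal S}\in C([0,T])$, see \Cref{rmkp>q^2}). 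The second difficulty, specific to the $L^{\infty}$ setting and absent from \Cref{PropSourceTermL^2}, is that \Cref{theolinearLinfty} measures the control cost against the $L^{2}$-norm of the data while the target spaces are $L^{\infty}$; this is reconciled by never feeding a bare $L^{\infty}$ datum into the cost estimate but only carried states that are parabolic regularizations of the $L^{\infty}$ source, so that \eqref{estl2faiblelinfty} supplies the missing $L^{\infty}$ control of the state on each window.
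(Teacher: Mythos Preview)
Your proposal is correct and follows essentially the same route as the paper: the paper obtains \Cref{ControlPropEstiLTTLinfty} as the case $r=\infty$ of \Cref{MethodLTTLinfty}, whose proof in \Cref{AppSourceTermMethod} is precisely the Liu--Takahashi--Tucsnak time-splitting you describe (geometric partition $T_k=T(1-q^{-k})$, decomposition $Z=Z_S+Z_H$ on each window, null-control of $Z_H$ via \Cref{theolinearLinfty}, carried state $a_{k+1}=Z_S(T_{k+1}^-)$ estimated by \eqref{estl2faiblelinfty}, and the key identity $M e^{M/(T_{k+2}-T_{k+1})}\rho_{\mathcal S}(T_k)=\rho_0(T_{k+2})$).

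Two minor remarks on your write-up. First, the extra factor $T_{k-1}$ you extract from Duhamel is correct but unnecessary: the exponential cancellation is \emph{exact}, not merely marginal, and since in $L^\infty$ you are taking a supremum over $k$ rather than summing, the uniform bound $\norme{H^J_{k+1}/\rho_0}_{L^\infty}\le C\norme{S}_{\mathcal S_\infty}$ follows directly from the identity above (this is the paper's \eqref{costH_k+1ter}). Second, your concern about ``never feeding a bare $L^\infty$ datum into the cost estimate'' is overstated: \Cref{theolinearLinfty} bounds the control by the $L^2$-norm of the initial datum, and on the first window one simply uses $\norme{Z_0}_{L^2(\Omega)^n}\le C\norme{Z_0}_{L^\infty(\Omega)^n}$, exactly as the paper does in \eqref{costlinftyT0}--\eqref{estiHT0}.
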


\section{The inverse mapping theorem in appropriate spaces}
\label{Fixed point}

The goal of this section is to prove \Cref{LocContrLinfty}. The proof is based on \Cref{ControlPropEstiLTTLinfty} and an inverse mapping theorem in suitable spaces.
\begin{proof}
Let us introduce the following space (see the definitions \eqref{defpoidsg}, \eqref{defpoidsf} and \eqref{defpoidsu}):
\begin{equation}
E := \{(Z,H^J) \in \mathcal{Z}_{\infty} \times \mathcal{H}_{\infty};\ \partial_t Z - D_J \Delta Z - A_J Z - H^J 1_{\omega} \in S_{\infty}\}.
\label{DefBanachE}
\end{equation}
We endow $E$ with the following norm: for every $(Z, H^J) \in E$, 
\begin{equation}
\norme{(Z,H^J)}_{E} = \norme{Z(0,.)}_{L^{\infty}} + \norme{Z}_{\mathcal{Z}_{\infty}} + \norme{H^J}_{\mathcal{H}_{\infty}} + \norme{\partial_t Z - D_J \Delta Z - A_J Z - H^J 1_{\omega}}_{S_{\infty}}.
\label{normeE}
\end{equation}
Then, $(E, \norme{.}_{E})$ is a Banach space.\\
\indent For every $Z \in \mathcal{Z}_{\infty}$, we introduce the following polynomial nonlinearity of degree more than $2$:
\begin{equation}
\label{Nonlinearity}
Q(Z) := G(Z) - A_J Z,
\end{equation}
where $G$ is defined in \eqref{defGgi}.  By denoting $\gamma := \max\left(\sum\limits_{i=1}^n \alpha_i, \sum\limits_{i=1}^n \beta_i\right)$, we remark that for every $Z \in \mathcal{Z}_{\infty}$, $Q(Z) = \sum\limits_{i=2}^{\gamma} Q_i(Z)$ where for every $2 \leq i \leq \gamma$, $Q_i(Z)$ is a polynomial term with respect to $Z=(z_1, \dots, z_n)$ of degree $i$. By using \eqref{rhorhoG}, we deduce that $Q(Z) \in \mathcal{S}_{\infty}$ and for every $2 \leq i \leq \gamma$,
\begin{equation}
\norme{Q_i(Z)}_{\mathcal{S}_{\infty}} = \norme{\frac{Q_i(Z)}{\rho_{\mathcal{S}}}}_{L^{\infty}(Q_T)^n} = \norme{\rho_{0}^{i-2}\frac{\rho_{0}^2}{\rho_{\mathcal{S}}} \frac{Q_i(Z)}{\rho_{0}^i}}_{L^{\infty}(Q_T)^n} \leq C \norme{Z}_{\mathcal{Z}_{\infty}}^i.
\label{MajQ}
\end{equation}
\indent We introduce the following mapping:
\begin{equation}
\begin{array}{lll}
\mathcal{A} :& E &\longrightarrow F:=S_{\infty} \times L_{inv}^{\infty} \\
     &(Z,H) &\longmapsto (\partial_t Z - D_J \Delta Z - A_J Z - H^J 1_{\omega} - Q(Z), Z(0,.)).
\end{array}
\label{DefMapA}
\end{equation}
By using \eqref{DefBanachE}, the fact that for $(Z,H^J) \in E$ and $Q(Z) \in S_{\infty}$ by \eqref{MajQ}, we see that $\mathcal{A}$ is well-defined. Moreover, $\mathcal{A} \in C^1(E;F)$. Indeed, all the terms in \eqref{DefMapA} are linear and continuous (thus $C^{\infty}$) thanks to \eqref{normeE} except the term $Q(Z)$. And, for $(Z,H^J) \in E$, $Q(Z)$ is a polynomial function with respect to $Z$ which is $C^{\infty}$. Indeed,
$$Q_i(Z) = \sum\limits_{\substack{\gamma_1+\dots +\gamma_n=i\\ \gamma_1, \dots, \gamma_n \geq 0}} c_{\gamma_1, \dots, \gamma_n} z_1^{\gamma_1} \dots z_n^{\gamma_n} =  \sum\limits_{\substack{\gamma_1+\dots +\gamma_n=i\\ \gamma_1, \dots, \gamma_n \geq 0}} c_{\gamma_1, \dots, \gamma_n} \mathcal{B}_{\gamma_1, \dots, \gamma_n} \circ \mathcal{L}_{\gamma_1, \dots, \gamma_n}(Z),$$
where
$$ \mathcal{L}_{\gamma_1, \dots, \gamma_n}(z_1, \dots, z_n) := (\underbrace{z_1, \dots, z_1}_{\gamma_1\ \text{times}}, \underbrace{z_2, \dots, z_2}_{\gamma_2\ \text{times}}, \dots, \underbrace{z_n, \dots, z_n}_{\gamma_n\ \text{times}}),$$
$$ \mathcal{B}_{\gamma_1, \dots, \gamma_n}(\underbrace{a_{1, 1}, \dots, a_{1, \gamma_1}, a_{2,1}, \dots, a_{2, \gamma_2}, \dots, a_{n,1},  \dots, a_{n,\gamma_n}}_{a_{\gamma_1, \dots, \gamma_n}}):=\prod\limits_{i=1}^n \prod\limits_{j=1}^{\gamma_i} a_{i,j}.$$
To simplify, we renote $\mathcal{L}:=\mathcal{L}_{\gamma_1, \dots, \gamma_n}$ and $\mathcal{B}:=\mathcal{B}_{\gamma_1, \dots, \gamma_n}$. The mapping $\mathcal{L}$ is $C^{\infty}$ because $\mathcal{L}$ is linear and continuous. The mapping $\mathcal{B}$ is $C^{\infty}$ because $\mathcal{B}$ is $i$-linear and continuous. Indeed, by using \eqref{rhorhoG}, we have
\begin{align*}
\norme{ \mathcal{B}(a_{\gamma_1, \dots, \gamma_n})}_{\mathcal{S}_{\infty}} &= \norme{\frac{\mathcal{B}(a_{\gamma_1, \dots, \gamma_n})}{\rho_{\mathcal{S}}}}_{L^{\infty}(Q_T)^n} = \norme{\rho_{0}^{i-2}\frac{\rho_{0}^2}{\rho_{\mathcal{S}}} \frac{\mathcal{B}(a_{\gamma_1, \dots, \gamma_n})}{\rho_{0}^i}}_{L^{\infty}(Q_T)^n}\\
& \leq C \prod\limits_{i=1}^n \prod\limits_{j=1}^{\gamma_i}\norme{a_{i,j}}_{\mathcal{Z}_{\infty}}.
\end{align*}
Moreover, the differential of $\mathcal{A}$ at the point $(0,0)$ in the direction $(Z,H^J)$ is
\begin{equation}
D \mathcal{A} (0,0) . (Z,H^J) = (\partial_t Z - D_J \Delta Z - A_J Z - H^J 1_{\omega}, Z(0,.)),
\label{DerivA}
\end{equation}
which is onto by using \Cref{ControlPropEstiLTTLinfty}. Then, by using the inverse mapping theorem (see \cite[Theorem 2]{CGMR}), we deduce that there exists $r > 0$, such that for every $(S, Z_0) \in F$ satisfying $\norme{(S,Z_0)}_{F} \leq r$, there exists $(Z,H^J) \in E$ such that $\mathcal{A}(Z, H^J) = (S,Z_0)$. By taking $S=0$ and $Z_0 \in L_{inv}^{\infty}$ such that $\norme{Z_0}_{L^{\infty}(\Omega)^n} \leq r$, we get the existence of $(Z,H^J) \in \mathcal{Z}_{\infty} \times \mathcal{H}_{\infty}$ such that 
\begin{equation*}
\label{defsystZNLProof}
\left\{
\begin{array}{l l}
 \partial_t Z - D_J \Delta Z = \underbrace{A_J Z + Q(Z)}_{G(Z)\ \text{by}\ \eqref{Nonlinearity}} + H^{J} 1_{\omega}&\mathrm{in}\ (0,T)\times\Omega,\\
\frac{\partial Z}{\partial \nu} = 0 &\mathrm{on}\ (0,T)\times\partial\Omega,\\
(Z(0,.),Z(T,.))=(Z_{0},0) &\mathrm{in}\  \Omega.
\end{array}
\right.
\end{equation*}
This concludes the proof of \Cref{LocContrLinfty}.
\end{proof}

\section{Comments}
\label{comments}

\subsection{More general semilinearities}

In this paper, we have only considered particular semilinearities of the form:
\begin{equation}
\forall 1 \leq i \leq n, \ f_i(u_1, \dots, u_n) =  (\beta_i-\alpha_i)\left(\prod\limits_{k=1}^n u_k^{\alpha_k} - \prod\limits_{k=1}^n u_k^{\beta_k}\right).
\label{defiPol}
\end{equation}
But the main result of the article, i.e., \Cref{mainresult1} holds true with more general polynomial semilinearities satisfying
$$\exists R \in \R[X_1, \dots, X_n],\ \forall 1 \leq i \leq n,\ \exists a_i \in \R^{*},\ f_i = a_i R,$$
where $\R[X_1, \dots, X_n]$ denotes the space of multivariate polynomials with coefficients in $\R$. In this case,  $(u_i^{*})_{1 \leq i \leq n}$ is a constant nonnegative stationary state if 
$$(u_i^{*})_{1 \leq i \leq n} \in [0,+\infty)^{n}\ \text{and}\ R(u_1^{*}, \dots, u_n^{*}) = 0.$$
For example, \eqref{defiPol} rewrites as follows
$$ \forall 1 \leq i \leq n, \ f_i(X_1, \dots, X_n) =  (\beta_i-\alpha_i)\left(\prod\limits_{k=1}^n X_k^{\alpha_k} - \prod\limits_{k=1}^n X_k^{\beta_k}\right).$$

\subsection{Degenerate cases}
\label{SectionDegen}
In this part, we assume that \Cref{asscouplordre0} is not satisfied. Then, the usual strategy is to perform the return method, introduced by Jean-Michel Coron in \cite{C2} (see also \cite[Chapter 6]{C}). This method consists in finding a reference trajectory $(\overline{U}, \overline{H^J})$ verifying $U(0,.)=U(T,.)=U^{*}$ of \eqref{systNLC} (see \Cref{SectionControlOpQuestion}) such that the linearized system of \eqref{systNLC} around $(\overline{U}, \overline{H^J})$ is null-controllable.

\begin{Exam}
For $n=2$, we take $\alpha_1 = 3$, $\beta_1 = 0$, $\alpha_2 = 0$, $\beta_2=1$ and $J=\{1\}$. We get the following control reaction-diffusion system:
\begin{equation}\label{systcube}
\left\{
\begin{array}{l l}
\partial_t u_1 - d_1 \Delta u_{1} = -u_1^{3} +u_2  + h_1 1_{\omega}  &\mathrm{in}\ (0,T)\times\Omega,\\
\partial_t u_2 - d_2 \Delta u_{2} = u_1^{3} - u_2   &\mathrm{in}\ (0,T)\times\Omega,\\
\frac{\partial U}{\partial \nu} = 0 &\mathrm{on}\ (0,T)\times\partial\Omega,\\
U(0,.)=U_0 &\mathrm{in}\  \Omega.
\end{array}
\right.
\end{equation}
In this case, \Cref{asscouplordre0} is not satisfied if and only if $(u_1^{*}, u_2^{*})=(0,0)$. By using the return method, Jean-Michel Coron, Sergio Guerrero and Lionel Rosier prove the local null-controllability around $(0,0)$ of \eqref{systcube} (see \cite{CGR}).
\end{Exam}

\begin{Exam}
\label{RMArt1}
For $n=4$, we take $\alpha_1 = \alpha_3 = \beta_2 = \beta_4 = 1$, $\alpha_2 = \alpha_4 = \beta_1= \beta_3 = 0$ and $J=\{1,2,3\}$. Then, we get the controlled reaction-diffusion system as in \eqref{systNLCu1u3u2u4}. In this case, \Cref{asscouplordre0} is not satisfied if and only if $(u_1^{*}, u_3^{*}, u_4^{*})=(0,0,0)$. More precisely, the linearized system around $\Big((0,u_2^*, 0,0),(0,0,0)\Big)$ is not null-controllable because the fourth equation is decoupled from the others:
$$\partial_t u_4 - d_4 \Delta u_{4} = u_2^* u_4.$$
By using the return method, the author proves the local null-controllability around $(0,u_2^{*},0,0)$ of \eqref{systNLCu1u3u2u4} (see \cite{LB}). More precisely, for this system, a reference trajectory is not difficult to construct. Indeed, one can take $\big((0,u_2^*,g,0),(0,0,\partial_t g - d_3 \Delta g)\big)$ where $g$ satisfies
\begin{equation}
g \in C^{\infty}(\overline{Q}),\ g\geq 0,\ g \neq 0,\ supp(g) \subset (0,T)\times \omega. 
\label{defgLoc}
\end{equation}
Thus, the fourth equation of the linearized system around this trajectory is:
$$\partial_t u_4 - d_4 \Delta u_{4} = g(t,x) u_1  - u_2^* u_4.$$
Roughly speaking, the component $u_4$ can be controlled throughout the coupling term $g(t,x) u_1$ which lives in the control zone (see for instance \cite[Section 3.1, Lemma 3]{CGR}).
\end{Exam}

\begin{Exam}
\label{app3}
For $n=10$, we take $\alpha_1 = \alpha_3 = \alpha_5 = \alpha_7 = \alpha_9 = \beta_2 = \beta_4 = \beta_6 = \beta_8 = \beta_{10} = 1$, $\alpha_2 = \alpha_4 = \alpha_6 = \alpha_8 = \alpha_{10} = \beta_1 = \beta_3 = \beta_5 = \beta_7 = \beta_9 = 0$ and $J=\{1,2,3,4,5\}$. The control system is the following one:
\small
\begin{equation}\label{systNLCu1u3u5u7u9u2u4u6u8u10}
\forall 1 \leq i \leq 10,\ \left\{
\begin{array}{l l}
\partial_t u_i - d_i \Delta u_{i} = \\(-1)^{i}(u_1 u_3 u_5u_7u_9 - u_2 u_4u_6u_8u_{10)} + h_i 1_{\omega} 1_{i \in J} &\mathrm{in}\ (0,T)\times\Omega,\\
\frac{\partial u_i}{\partial \nu} = 0 &\mathrm{on}\ (0,T)\times\partial\Omega,\\
u_i(0,.)=u_{i,0} &\mathrm{in}\  \Omega.
\end{array}
\right.
\end{equation}
\normalsize
The stationary state
$$ (0,0,0,0,u_5^{*},u_6^{*}, u_7^{*}, u_8^{*}, u_9^{*}, u_{10}^{*}),$$
where $(u_5^{*},u_7^{*}, u_9^{*}) \in (0,+\infty)^{3}$ and $(u_6^{*},u_8^{*}, u_{10}^{*}) \in [0,+\infty)^{3}$ does not satisfy \Cref{asscouplordre0}. In this case, the linearized system around \\$\Big((0,0,0,0,u_5^{*},u_6^{*}, u_7^{*}, u_8^{*}, u_9^{*}, u_{10}^{*}),(0, 0,0,0, 0)\Big)$ is not null-controllable because the sixth equation of this system is:
$$\partial_t u_6 - d_6 \Delta u_6 = 0.$$
As for \Cref{RMArt1}, we can easily construct a reference trajectory:
$$\Big((0,0,g,0,u_5^{*},u_6^{*}, u_7^{*}, u_8^{*}, u_9^{*}, u_{10}^{*}),(0,0,\partial_t g - d_3 \Delta g,0,0)\Big),$$
where $g$ satisfies \eqref{defgLoc}. By performing the same change of variables as in \Cref{sectionChangeVar} and by linearizing around the reference trajectory, we find the same system as in \eqref{defsystZ} (see \Cref{SectionLinearization} with $n=10$, $m=5$) where the coefficients of $A$ are allowed to depend on $(t,x)$ and $a_{61}(t,x) \geq \varepsilon > 0$ on $(t_1,t_2) \times \omega_0 \subset (0,T) \times \omega$. This linear system seems to be null-controllable according to the following heuristic diagram:
\begin{align*}
&h_1 \xrightarrow[]{controls} z_1,\ h_2 \xrightarrow[]{controls} z_2,\ h_3 \xrightarrow[]{controls} z_3,\ h_4 \xrightarrow[]{controls} z_4,\ h_{5}\xrightarrow[]{controls} z_{5}, \\ 
&z_1  \xrightarrow[a_{61}(t,x) z_1]{controls}z_{6} \xrightarrow[{\Delta z_{6}}]{controls} z_{7}\xrightarrow[{\Delta z_{7}}]{controls} z_{8}\xrightarrow[{\Delta z_{8}}]{controls} z_9  \xrightarrow[{\Delta z_{9}}]{controls} z_{10}.
\end{align*}
Unfortunately, we do not know how to prove that the linearized system around this trajectory is null-controllable for technical reasons maybe. It comes from the fact that in this case $m=5 < n-4 = 6$. Intuitively, with the proof strategy performed in \cite{LB}, we have to benefit from one coupling term of order $0$ (in $L^{\infty}$) and four coupling terms of order $2$. This leads to the following open problem.
\end{Exam}
We introduce the following notation: $C_b^{\infty}(\overline{Q_T})$ is the set of functions $A$ defined on $\overline{Q_T}$ of class $C^{\infty}$ and such that all the derivatives of $A$ are bounded. We choose to state the following open problem for Dirichlet conditions instead of Neumann conditions to avoid the constraints on the initial data.
\begin{op}\label{OpenProb}
Let $n,m$ be two integers such that $n \geq 6$, $m < n-4$ and $(d_i)_{1 \leq i \leq n} \in (0,+\infty)^n$. Let $A \in C_b^{\infty}(\overline{Q_T})^{m \times n}$. We assume that there exist $(t_1,t_2) \subset (0,T)$, a nonempty open subset $\omega_0$ such that $\omega_0 \subset\subset \omega$ and $\varepsilon > 0$ such that $A_{m+1,m}(t,x) \geq \varepsilon$ on $(t_1,t_2)\times\omega_0$. For $y_0 \in L^2(\Omega)^n$, $(h_i)_{1 \leq i \leq m} \in L^2((0,T)\times\Omega)^m$, we consider the linear control system:
\small
\begin{align}
\left\{
\begin{array}{ll l}
\partial_t y_i - d_i \Delta y_{i} = \sum\limits_{j=1}^n A_{i,j}(t,x) y_j + h_i 1_{\omega} 1_{1 \leq i \leq m} &\mathrm{in}\ (0,T)\times\Omega,&1 \leq i \leq m+1\\
 \partial_t y_i - d_i \Delta y_{i} = \Delta y_{i-1} &\mathrm{in}\ (0,T)\times\Omega,& m+2 \leq i \leq n\\
y = 0 &\mathrm{on}\ (0,T)\times\partial\Omega,\\
y(0,.)=y_{0} &\mathrm{in}\  \Omega.
\end{array}
\right.
\label{systNLCY2}
\end{align}
\normalsize
Is the system \eqref{systNLCY2} null-controllable in $L^2(\Omega)^n$?
\end{op}
\begin{rmk}
\Cref{OpenProb} is closely related to the generalization of \cite[Theorem 1.1]{FCGBT} to linear parabolic systems with diffusion matrices that contain Jordan blocks of dimension more than $5$. Indeed, the diffusion matrix of \eqref{systNLCY2} is $D_J$ defined in \eqref{defNewD}. The submatrix $D_{\sharp}$ (see again \eqref{defNewD}) looks like a Jordan block of dimension more than $5$ if $m<n-4$. Consequently, the strategy of Carleman inequalities applied to each equation of the adjoint system of \eqref{systNLCY2} yields some global terms in the right hand side of the inequality that cannot be absorbed by the left hand side (see \cite[Section 2]{FCGBT}).
\end{rmk}

\appendix
\section{}\label{appendix}

\subsection{Stationary states}\label{StatioStates}

We only have considered nonnegative stationary constant solutions of \eqref{systNLF}. It is not restrictive because of the following proposition.
\begin{prop}\label{solutionspositivesconstantes}
Let $(u_i)_{1 \leq i \leq n} \in C^2(\overline{\Omega})^n$ be a nonnegative solution of
\begin{equation}
\forall 1 \leq i \leq n,\ 
\left\{
\begin{array}{l l}
-d_i \Delta u_{i} = f_i(U) &\mathrm{in}\ \Omega,\\
\frac{\partial u_i}{\partial \nu} = 0 &\mathrm{on}\ \partial\Omega,
\end{array}
\right.
\label{syststat}
\end{equation}
where $f_i(U)$ ($1 \leq i \leq n$) is defined in \eqref{deffi}. Then, for every $1 \leq i \leq n$, $u_i$ is constant.
\end{prop}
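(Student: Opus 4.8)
The plan is to run an entropy (energy) argument that exploits the fact that the $n$ reaction terms $f_i(U)$ all share the common factor $g(U) := \prod_{k=1}^n u_k^{\alpha_k} - \prod_{k=1}^n u_k^{\beta_k}$. First I would rewrite \eqref{syststat}: since $f_i(U) = (\beta_i-\alpha_i)\,g(U)$ and $\alpha_i \neq \beta_i$ for every $i$, the equations are equivalent to $d_i \Delta u_i = (\alpha_i-\beta_i)\,g(U)$ in $\Omega$, together with $\frac{\partial u_i}{\partial\nu}=0$ on $\partial\Omega$. Thus all the Laplacians $d_i\Delta u_i$ are proportional to one and the same function $g(U)$, which is what makes the global argument possible.

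The heart of the argument is to test the $i$-th equation against $\ln u_i$, sum over $i$, and integrate by parts. Setting $\sigma := \sum_{k=1}^n (\alpha_k-\beta_k)\ln u_k$, the Neumann condition kills the boundary terms and $\nabla(\ln u_k)\cdot\nabla u_k = |\nabla u_k|^2/u_k$, so I obtain the identity
\[ \int_\Omega \sigma\, g(U)\, dx = \sum_{k=1}^n d_k \int_\Omega (\ln u_k)\,\Delta u_k\, dx = -\sum_{k=1}^n d_k \int_\Omega \frac{|\nabla u_k|^2}{u_k}\, dx \leq 0. \]
On the other hand, writing $A := \prod_k u_k^{\alpha_k}$ and $B := \prod_k u_k^{\beta_k}$ (both positive where all components are), one has $\sigma = \ln(A/B)$ and $g(U) = A-B$; since $\ln$ is increasing, $\sigma$ and $g(U)$ have the same sign, whence $\sigma\, g(U) \geq 0$ pointwise. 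Comparing with the displayed inequality forces $\int_\Omega \sigma\, g(U)\,dx = 0$, hence $\nabla u_k \equiv 0$ for every $k$, so each $u_k$ is constant (and, consistently with \eqref{Stat}, $g(U)\equiv 0$).

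The step I expect to be the main obstacle is the appearance of $\ln u_k$ when a component vanishes somewhere in $\overline{\Omega}$, which is allowed since $U$ is only assumed nonnegative. The robust way to handle this is to prove first, via the strong maximum principle applied to each equation together with the connectedness of $\Omega$, that every $u_i$ is either identically zero or strictly positive on $\overline{\Omega}$; on the strictly positive components the computation above is rigorous verbatim, and a zero component is trivially constant. Alternatively, I would regularize by replacing $\ln u_k$ with $\ln(u_k+\delta)$, which gives for every $\delta>0$ the clean inequality $\sum_k d_k\int_\Omega |\nabla u_k|^2/(u_k+\delta)\,dx = -\int_\Omega \sigma_\delta\, g(U)\,dx \leq 0$ with $\sigma_\delta := \sum_k (\alpha_k-\beta_k)\ln(u_k+\delta)$, and then let $\delta\to 0$: the left-hand side increases to $\sum_k d_k\int_{\{u_k>0\}} |\nabla u_k|^2/u_k\,dx$ by monotone convergence (note $\nabla u_k=0$ on $\{u_k=0\}$, as a nonnegative $C^2$ function has vanishing gradient at interior minima), while a short case analysis on the sign of $g(U)$ shows $\liminf_{\delta\to 0}\sigma_\delta\, g(U)\geq 0$ pointwise. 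The only genuinely delicate point in this second route is securing a uniform-in-$\delta$ integrable lower bound for $\sigma_\delta\, g(U)$ near the zero set of the components so that Fatou's lemma applies; I expect the preliminary positivity dichotomy to be the cleaner path.
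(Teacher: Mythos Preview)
Your core argument---multiply the $i$-th equation by $\ln u_i$, sum, integrate by parts, and exploit $(\ln A - \ln B)(A-B)\geq 0$---is exactly the entropy method the paper uses. The difference is only in how vanishing components are handled.

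The paper takes your second route (shift $u_i\mapsto u_i+\varepsilon$) but deals with the limit more directly than the Fatou argument you sketch: writing $A_\varepsilon:=\prod_k(u_k+\varepsilon)^{\alpha_k}$, $B_\varepsilon:=\prod_k(u_k+\varepsilon)^{\beta_k}$, it expands $A-B=(A_\varepsilon-B_\varepsilon)+O(\varepsilon)$ (polynomials in the $u_k$), drops the nonnegative piece $(\ln A_\varepsilon-\ln B_\varepsilon)(A_\varepsilon-B_\varepsilon)$, and bounds the remainder by $C|\log\varepsilon|\,O(\varepsilon)\to 0$. This sidesteps the integrable-lower-bound issue you flag. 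Note also a slip in your writeup: your regularized identity is \emph{not} ``$\leq 0$'' for fixed $\delta>0$, since $\sigma_\delta$ has the sign of $A_\delta-B_\delta$, not of $A-B$; the inequality only appears after the limit (or after the expansion just described).

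Your preferred route (the positivity dichotomy) does work, but you should say why the strong maximum principle applies, since $f_i(U)$ is sign-indefinite. The point is that at least one of $\alpha_i,\beta_i$ is $\geq 1$, so one of the two products contains a factor $u_i$; absorbing it gives $-d_i\Delta u_i + c_i(x)\,u_i = r_i(x)$ with $c_i,r_i\geq 0$ bounded (for instance, if $\alpha_i=0<\beta_i$ then $c_i=\beta_i u_i^{\beta_i-1}\prod_{k\neq i}u_k^{\beta_k}$ and $r_i=\beta_i\prod_{k\neq i}u_k^{\alpha_k}$; the other cases are analogous). Then the strong minimum principle and Hopf's lemma (using $\partial_\nu u_i=0$) force $u_i>0$ on $\overline\Omega$ or $u_i\equiv 0$, after which your unregularized entropy computation is rigorous. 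This is arguably cleaner than the paper's $\varepsilon$-expansion and yields the dichotomy as a bonus.
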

The proof relies on an entropy inequality: $-\sum_{i=1}^{n} \log(u_i) f_i(U)  \leq 0$.
\begin{proof}
Let $\varepsilon > 0$ be a small parameter. For every $1 \leq i \leq n$, we introduce 
$$u_{i,\varepsilon} = u_i +\varepsilon,\qquad w_{i,\varepsilon} = u_{i,\varepsilon}(\log u_{i,\varepsilon} - 1)+1 \geq 0.$$
We have 
\begin{equation}
\forall 1\leq i \leq n,\ \nabla w_{i,\varepsilon} = \log(u_{i,\varepsilon}) \nabla u_{i,\varepsilon},\qquad \Delta w_{i,\varepsilon} = \log(u_{i,\varepsilon}) \Delta u_{i,\varepsilon} + \frac{|\nabla u_{i,\varepsilon}|^2}{u_{i,\varepsilon}}.
\label{statsol2}
\end{equation}
Then, from \eqref{syststat} and \eqref{statsol2}, we have
\begin{equation}
\forall 1 \leq i \leq n,\ 
\left\{
\begin{array}{l l}
-d_i \Delta w_{i,\varepsilon} + d_i \frac{|\nabla u_{i,\varepsilon}|^2}{u_{i,\varepsilon}} = -\log(u_{i,\varepsilon}) f_i(U) &\mathrm{in}\ \Omega,\\
\frac{\partial w_{i,\varepsilon}}{\partial n} = 0 &\mathrm{on}\ \partial\Omega.
\end{array}
\right.
\label{syststatw}
\end{equation}
We sum the $n$ equations of \eqref{syststatw}, we integrate on $\Omega$ and we use the increasing of the function $\log$:
\begin{align}
\label{statsol3}
&0 + \int_{\Omega} \sum\limits_{i=1}^{n} d_i \frac{|\nabla u_{i,\varepsilon}|^2}{u_{i,\varepsilon}}\\
&= - \Bigg(\int_{\Omega} \left\{\log\left(\prod\limits_{i=1}^n u_{i,\varepsilon}^{\alpha_i}\right) - \log\left(\prod\limits_{i=1}^n u_{i,\varepsilon}^{\beta_i}\right)\right\}\left\{\prod\limits_{i=1}^n u_i^{\alpha_i} - \prod\limits_{i=1}^n u_i^{\beta_i}\right\}\Bigg)\notag\\
&=  - \Bigg(\int_{\Omega} \left\{\log\left(\prod\limits_{i=1}^n u_{i,\varepsilon}^{\alpha_i}\right) - \log\left(\prod\limits_{i=1}^n u_{i,\varepsilon}^{\beta_i}\right)\right\}\left\{\prod\limits_{i=1}^n u_{i,\varepsilon}^{\alpha_i} - \prod\limits_{i=1}^n u_{i,\varepsilon}^{\beta_i} + \mathcal{O}(\varepsilon) \right\}\Bigg)\notag\\
& \leq  \int_{\Omega} \left|\log\left(\prod\limits_{i=1}^n u_{i,\varepsilon}^{\alpha_i}\right) - \log\left(\prod\limits_{i=1}^n u_{i,\varepsilon}^{\beta_i}\right)\right| \mathcal{O}(\varepsilon) \leq \left(\sum\limits_{i=1}^n (\alpha_i + \beta_i) \right) |\log(\varepsilon)| \mathcal{O}(\varepsilon) \underset{\varepsilon \rightarrow 0} \rightarrow 0.\notag
\end{align}
Moreover,
\begin{equation}\forall 1 \leq i \leq n,\ \int_{\Omega}d_i \frac{|\nabla u_i^{\varepsilon}|^2}{u_i^{\varepsilon}} = \int_{\Omega}4 d_i|\nabla \sqrt{u_i^{\varepsilon}}|^2.\label{uiepsnonconstant}\end{equation} 
Consequently, from \eqref{statsol3}, \eqref{uiepsnonconstant}, we get that
$$ \forall 1 \leq i \leq n,\ \int_{\Omega}4 d_i|\nabla \sqrt{u_i}|^2 = 0.$$
Consequently, for every $1 \leq i \leq n$, $u_i$ is constant.
\end{proof}

Our proof of \Cref{mainresult1} does not treat the case of stationary states which can change of sign, contrary to the proof of \cite[Theorem 3.2]{LB} (see \cite[Section 6.2]{LB}). As in the previous part (see \Cref{app3}), the proof of \cite[Theorem 3.2]{LB} can be adapted to local controllability around stationary states of \eqref{systNLF} which can change of sign if $m \geq n-4$ (for technical reasons maybe, see \Cref{OpenProb}).

\subsection{Proof of the existence of invariant quantities in the system}\label{ProofPropinvquant}
The goal of this section is to prove \Cref{PropInvQuant}.
\begin{proof}
We introduce the notation $R :=\prod\limits_{k=1}^n u_k^{\alpha_k} - \prod\limits_{k=1}^n u_k^{\beta_k}$ and we take $m+1 \leq i \leq n$. By using the fact that $u_i \in W_T$ and from \cite[Lemma 3]{FCNavierCo}, we obtain that the mapping $t \mapsto \int_{\Omega} u_i(t,x) dx$ is absolutely continuous and for a.e. $0 \leq t \leq T$,
\begin{equation}
\label{dtuimass1}
\frac{d}{dt} \int_{\Omega} u_i(t,x) dx = \left(\partial_t u_i(t,.), 1\right)_{(H^1(\Omega))',H^1(\Omega)}.
\end{equation}
Then, by using that $((u_i)_{1 \leq i \leq n},(h_i)_{1 \leq i \leq m})$ is a trajectory of \eqref{systNLC} and by taking $w=1$ in \eqref{formvarNLC}, we find that for a.e. $0 \leq t \leq T$,
\begin{align}
\label{dtuimass2}\left(\partial_t u_i(t,.), 1\right)_{(H^1(\Omega))',H^1(\Omega)} &= d_i(\nabla u_i(t,.), \nabla 1)_{L^2(\Omega),L^2(\Omega)} + \int_{\Omega}(\beta_i-\alpha_i)R\\
&= \int_{\Omega}(\beta_i-\alpha_i)R. \notag
\end{align}
Then, by using \eqref{dtuimass1} and \eqref{dtuimass2}, we get for a.e. $0 \leq t \leq T$,
\begin{equation}
\label{dtuimass3}
\frac{d}{dt} \int_{\Omega} \frac{u_i(t,.)}{\beta_i-\alpha_i} =  \int_{\Omega}R.
\end{equation}
Now, let $m+1 \leq k \neq l \leq n$. By \eqref{dtuimass3} for $i=k$ and  \eqref{dtuimass3} for $i=l$ , we deduce that for a.e. $0 \leq t \leq T$,
\begin{equation}
\label{dtuimass4}
\frac{d}{dt} \int_{\Omega}\left(\frac{u_k(t,.)}{\beta_k-\alpha_k} - \frac{u_l(t,.)}{\beta_l-\alpha_l}\right) = 0.
\end{equation}
Therefore, from \eqref{dtuimass4}, we have for every $t \in [0,T]$,
\begin{equation*}
\frac{1}{|\Omega|} \int_{\Omega} \left(\frac{u_k(t,x)}{\beta_k-\alpha_k} - \frac{u_l(t,x)}{\beta_l-\alpha_l}\right)dx=\frac{u_k^{*}}{\beta_k-\alpha_k} - \frac{u_l^{*}}{\beta_l-\alpha_l}.
\label{ci1bis}
\end{equation*}
\indent If we assume that $d:=d_k=d_l$, then the equation satisfied by $v:= (\beta_l-\alpha_l) u_k - (\beta_k-\alpha_k) u_l$ is
\begin{equation}
\label{systvBackwardUni}
\left\{
\begin{array}{l l}
\partial_t v -  d \Delta v= 0&\mathrm{in}\ (0,T)\times\Omega,\\
\frac{\partial v}{\partial \nu} = 0 &\mathrm{on}\ (0,T)\times\partial\Omega,\\
v(T,.)= (\beta_l - \alpha_l)u_k^{*} - (\beta_k-\alpha_k)u_l^{*}&\mathrm{in}\  \Omega.
\end{array}
\right.
\end{equation}
The backward uniqueness of the heat equation (see for instance \cite[Théorème II.1]{BT}) applied to \eqref{systvBackwardUni} leads to 
\begin{equation*}
\forall t \in [0,T],\ (\beta_l - \alpha_l)u_{k}(t,.) - (\beta_k-\alpha_k)u_{l}(t,.)= (\beta_l - \alpha_l)u_k^{*} - (\beta_k-\alpha_k)u_l^{*}.
\end{equation*}
This yields \eqref{lienciedt}.
\end{proof}

\subsection{Proofs concerning the change of variables}

\subsubsection{Proof of the equivalence of the two systems}\label{Eq2systems}
In this section, we prove \Cref{propchangevarZ}. It is based on the following algebraic lemma.
\begin{lem}
\label{LemmaCombi}
Let $s$ be an integer such that $s \geq 2$. Let $(a_1, \dots, a_s) \in \C^{s}$ be such that $a_i \neq a_j$ for $i \neq j$. Then, we have
\begin{equation}
\sum\limits_{i=1}^{s} \prod \limits_{\substack{j=1 \\j \neq i}}^{s} \frac{1}{a_i-a_j} = 0.
\label{lemmacombieq}
\end{equation}
\end{lem}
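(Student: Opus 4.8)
The plan is to interpret the sum through the rational function $1/P$, where $P(x) := \prod_{j=1}^{s}(x-a_j)$. Since the $a_j$ are pairwise distinct, $P$ has $s$ simple roots, and differentiating the product gives $P'(a_i) = \prod_{j \neq i}(a_i - a_j)$. Hence each summand is exactly $1/P'(a_i)$, and the quantity to be shown to vanish is $\sum_{i=1}^{s} 1/P'(a_i)$, that is, the sum of the residues of $1/P$ at its poles.

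First I would write down the partial fraction decomposition. Because the numerator $1$ has degree $0 < s = \deg P$, there is no polynomial part, and for simple poles the decomposition reads
\begin{equation*}
\frac{1}{P(x)} = \sum_{i=1}^{s} \frac{1}{P'(a_i)} \cdot \frac{1}{x - a_i}.
\end{equation*}
Multiplying both sides by $x$ and letting $\abs{x} \to +\infty$ in $\C$, the right-hand side tends to $\sum_{i=1}^{s} 1/P'(a_i)$, since $x/(x-a_i) \to 1$ for each $i$. The left-hand side is $x/P(x)$, which tends to $0$ precisely because $\deg P = s \geq 2$, so that $\abs{x/P(x)} = O(\abs{x}^{1-s}) \to 0$. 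Comparing the two limits yields $\sum_{i=1}^{s} 1/P'(a_i) = 0$, which is exactly \eqref{lemmacombieq}.

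An equivalent route, which I would mention as an alternative, is via Lagrange interpolation: the coefficient of $x^{s-1}$ in the polynomial interpolating the data $(a_i, f(a_i))_{1\le i\le s}$ equals $\sum_{i} f(a_i)\prod_{j\neq i}(a_i-a_j)^{-1}$. Taking $f \equiv 1$, which has degree $0 \leq s-2$ and is therefore reproduced exactly by the interpolation, forces this leading coefficient to vanish, giving the identity directly. In either approach the only point requiring care is the hypothesis $s \geq 2$: it is what guarantees both the absence of a polynomial part (equivalently the decay $x/P(x)\to 0$) in the first argument and the inequality $\deg(f) \le s-2$ in the second. I expect no genuine difficulty beyond these standard facts about simple poles and residues; the whole content of the lemma is the residue-at-infinity bookkeeping made possible by $s \geq 2$.
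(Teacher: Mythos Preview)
Your proof is correct and complete; the residue-at-infinity argument (and the Lagrange alternative) are both valid, and you have correctly identified where the hypothesis $s\geq 2$ enters.

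The paper takes a related but genuinely different route. Rather than fixing all $s$ roots and reading off the asymptotics of $1/P$ at infinity, it singles out the last point: it introduces the rational function
\[
F(X)=\sum_{i=1}^{s-1}\Bigl(\prod_{\substack{j=1\\ j\neq i}}^{s-1}\frac{1}{a_i-a_j}\Bigr)\frac{1}{a_i-X}\;+\;\prod_{j=1}^{s-1}\frac{1}{X-a_j},
\]
computes its partial-fraction coefficients at the poles $a_1,\dots,a_{s-1}$, finds them all zero, and hence $F\equiv 0$; evaluating $F(a_s)=0$ then yields~\eqref{lemmacombieq}. In effect the paper is proving that the explicit partial-fraction expansion of $\prod_{j=1}^{s-1}(X-a_j)^{-1}$ holds as an identity in $X$, and then specialises $X=a_s$. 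Your approach is the more standard textbook argument (sum of residues of $1/P$ vanishes when $\deg P\geq 2$) and is arguably more direct, since it treats all $s$ points symmetrically and avoids the auxiliary function $F$. The paper's route, on the other hand, makes the identity appear as a single evaluation of a rational-function identity rather than as a limit, which some readers may find more algebraic. Both arguments ultimately rest on the same partial-fraction machinery.
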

\begin{proof}
Let $\C(X)$ be the field of fractional functions with coefficients in $\C$ and $F \in \C(X)$ be defined by
\begin{align}
\label{defFrational}
F(X) := \left(\sum\limits_{i=1}^{s-1}\left( \prod \limits_{\substack{j=1 \\j \neq i}}^{s-1}\frac{1}{a_i-a_j}  \right) \frac{1}{a_i - X} \right)+ \prod \limits_{j=1 }^{s-1} \frac{1}{X-a_j}.
\end{align}
The partial fractional decomposition of $F$ is the following one:
$$ F(X) = \sum\limits_{i=1}^{s-1} \frac{b_i}{X-a_{i}},\qquad\text{where}\ b_i \in \C.$$
For $1 \leq i \leq s-1$, we compute each $b_i$ by multiplying \eqref{defFrational} by $(X-a_i)$ and by evaluating $X=a_i$:
$$b_i = - \prod \limits_{\substack{j=1 \\j \neq i}}^{s-1}\frac{1}{a_i-a_j}+ \prod \limits_{\substack{j=1 \\j\neq i}}^{s-1}\frac{1}{a_i-a_j}=0.$$
We deduce that $F=0$. By remarking that $$F(a_s) = \sum\limits_{i=1}^{s} \prod \limits_{\substack{j=1\\ j \neq i}}^{s} \frac{1}{a_i-a_j} = 0,$$
we conclude the proof of \eqref{lemmacombieq} 
\end{proof}
The following result is an easy consequence of \Cref{LemmaCombi}.
\begin{cor}
\label{CorIdentity}
For every $m+2 \leq k \leq n$, we have
\begin{equation}
\label{identityLem}
\sum\limits_{l=m+1}^{k} P_{kl}(\beta_l-\alpha_l)=0.
\end{equation}
\end{cor}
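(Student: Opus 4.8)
The plan is to reduce the claimed identity directly to \Cref{LemmaCombi} by unpacking the definition \eqref{DefcoeffP} of the entries $P_{kl}$. Fix $m+2 \leq k \leq n$. Since every index $l$ in the sum satisfies $m+1 \leq l \leq k$, we are always in the branch $k \geq l$, so $P_{kl}$ is given by the nonzero formula. First I would substitute this formula and observe that the factor $(\beta_l - \alpha_l)$ multiplying $P_{kl}$ cancels exactly the factor $(\beta_l - \alpha_l)$ appearing in the denominator of $P_{kl}$. Concretely,
\begin{equation*}
P_{kl}(\beta_l - \alpha_l) = \prod_{\substack{m+1 \leq r \leq k \\ r \neq l}} \frac{1}{d_l - d_r},
\end{equation*}
so the identity \eqref{identityLem} becomes the purely algebraic statement
\begin{equation*}
\sum_{l=m+1}^{k} \prod_{\substack{m+1 \leq r \leq k \\ r \neq l}} \frac{1}{d_l - d_r} = 0.
\end{equation*}

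Next I would recognize this as an instance of \Cref{LemmaCombi} after a shift of indexing. Setting $s := k - m$, I note that $s \geq 2$ precisely because $k \geq m+2$, which matches the hypothesis of the lemma. Reindexing via $a_i := d_{m+i}$ for $1 \leq i \leq s$ turns the displayed sum into $\sum_{i=1}^{s} \prod_{j \neq i} (a_i - a_j)^{-1}$. The pairwise-distinctness hypothesis $a_i \neq a_j$ for $i \neq j$ required by \Cref{LemmaCombi} is supplied by \Cref{Assdkneqdl}, which guarantees $d_k \neq d_l$ for all distinct $k,l \in \{m+1, \dots, n\}$ and in particular for the indices appearing here. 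Applying \Cref{LemmaCombi} then gives that the sum vanishes, which is \eqref{identityLem}.

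There is no substantive obstacle in this argument: once the cancellation of $(\beta_l - \alpha_l)$ is carried out, the statement is exactly the combinatorial identity already established. The only point demanding a little care is the bookkeeping of the index set $\{m+1, \dots, k\}$ and the verification that the distinctness assumption \Cref{Assdkneqdl} covers all the differences $d_l - d_r$ that occur, so that the hypotheses of \Cref{LemmaCombi} are genuinely met before invoking it.
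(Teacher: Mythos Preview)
Your proof is correct and follows essentially the same approach as the paper: both substitute \eqref{DefcoeffP} to cancel the factor $(\beta_l-\alpha_l)$, reindex with $s=k-m$ and $a_i=d_{m+i}$, and invoke \Cref{LemmaCombi}. Your write-up is in fact slightly more careful, since you explicitly check $s\geq 2$ and appeal to \Cref{Assdkneqdl} for the pairwise distinctness of the $d_l$.
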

\begin{proof}
By \eqref{DefcoeffP}, we have by taking $s=k-m$ and $a_i = d_{i+m}$ for $1 \leq i \leq k-m$ in \Cref{LemmaCombi}
$$ \sum\limits_{l=m+1}^{k} P_{kl}(\beta_l-\alpha_l) = \sum\limits_{i=1}^{k-m} P_{k,i+m}(\beta_{i+m}-\alpha_{i+m}) = \sum\limits_{i=1}^{k-m} \prod \limits_{\substack{j=1 \\j\neq i}}^{i-m} \frac{1}{d_{i+m}-d_{j+m}} = 0.$$
This ends the proof of \Cref{CorIdentity}.
\end{proof}
Now, we turn to the proof of \Cref{propchangevarZ}.
\begin{proof}
\indent We introduce the following notation:
$ R :=\prod\limits_{k=1}^n u_k^{\alpha_k} - \prod\limits_{k=1}^n u_k^{\beta_k}.$\\
\indent We assume that $(U,H^J)$ is a trajectory of \eqref{systNLC}. The equations $1 \leq i \leq m+1$ of \eqref{systNLCZ1} are clearly satisfied. Let $ m+2 \leq i \leq n$. We have:
\begin{align}
\notag \partial_t z_i - d_i \Delta z_{i} &=\partial_t \left(\sum_{j=m+1}^{i} P_{ij}u_j\right) - d_i \Delta  \left(\sum_{j=m+1}^{i} P_{ij}u_j\right)\\
\notag & = \sum_{j=m+1}^{i} P_{ij} (\partial_t u_j - d_j \Delta u_j + (d_j -d_i)\Delta u_j)\\
\label{PreuveEquiCalc}&=  \sum_{j=m+1}^{i} \left(P_{ij}((\beta_j - \alpha_j) R) + P_{ij}\underbrace{(d_j-d_i)}_{0\ \text{if}\ j=i}\Delta u_j\right)\\
\notag & = R \underbrace{\sum_{j=m+1}^{i} P_{ij}(\beta_j - \alpha_j)}_{0\ \text{by}\ \text{\Cref{CorIdentity}}} + \sum_{j=m+1}^{i-1} \underbrace{P_{ij}(d_j-d_i)}_{P_{i-1,j}\ \text{by}\ \eqref{DefcoeffP}} \Delta u_j\\
\notag & = \Delta z_{i-1}.
\end{align}
This ends the proof of “$\Rightarrow$”.\\
\indent We assume that $(Z,H^J)$ satisfies \eqref{systNLCZ1}. Then, the equations 
\begin{equation}
\label{preuveui}
\partial_t u_i - d_i \Delta u_i = (\beta_i - \alpha_i)R,
\end{equation}
are clearly satisfied for $1 \leq i \leq m+1$. We prove \eqref{preuveui} by strong induction on $i \in \{m+2,\dots,n\}$. By using \eqref{PreuveEquiCalc} for $i=m+2$ and \eqref{preuveui} for $i=m+1$, we obtain 
\begin{align*}
&\sum_{j=m+1}^{m+2}P_{m+2,j}(\partial_t u_{j} - d_{j} \Delta u_{j})=0 \\
&\Leftrightarrow  P_{m+2,m+2} (\partial_t u_{m+2} - d_{m+2} \Delta u_{m+2})= -R P_{m+2,m+1} (\beta_{m+1}-\alpha_{m+1}).
\end{align*}
 This leads to \eqref{preuveui} for $i=m+2$ by using $P_{m+2,m+1}/P_{m+2,m+2} = -(\beta_{m+2}-\alpha_{m+2})/(\beta_{m+1}-\alpha_{m+1})$ by \eqref{DefcoeffP}. For $i>m+2$, by induction, we have
$ P_{ii} (\partial_t u_{i} - d_{i} \Delta u_{i}) + \sum\limits_{j=m+1}^{i-1} P_{ij} (\beta_j-\alpha_j)R = 0$ by \eqref{PreuveEquiCalc}. Then, from \Cref{CorIdentity}, we have $\sum\limits_{j=m+1}^{i-1} P_{ij} (\beta_j - \alpha_j) = - P_{ii}(\beta_i-\alpha_i)$. This yields \eqref{preuveui} and ends the proof of “$\Leftarrow$”.\\
\indent This concludes the proof of \Cref{propchangevarZ}.
\end{proof}

\subsubsection{Proof of the equivalence concerning the mass condition}\label{Proofinvquantzu}
In this section, we prove the equivalence \eqref{InvquantZU} which leads to the equivalence between \Cref{LocContrLinfty} and \Cref{mainresult1}.
\begin{proof}
\indent Assume that $Z_0 \in L_{inv}^{\infty}$. Then, we have 
\begin{equation}
\label{EcritureInvquantZ}
\forall m+2 \leq i \leq n,\ \int_{\Omega} \sum_{k=m+1}^{i} P_{ik} (u_{k,0}(x) - u_k^{*}) dx = 0.
\end{equation}
We prove \eqref{ci1Eq} by strong induction on $k \geq m+2$. The case $k=m+2$ comes from \eqref{EcritureInvquantZ} for $i=m+2$ and $P_{m+2,m+1}/P_{m+2,m+2} = -(\beta_{m+2}-\alpha_{m+2})/(\beta_{m+1}-\alpha_{m+1})$ by \eqref{DefcoeffP}. For $i>m+2$ in \eqref{EcritureInvquantZ}, by induction, we have
$$ \int_{\Omega} \left\{P_{ii} (u_{i,0}(x) - u_i^{*}) + \sum\limits_{k=m+1}^{i-1} P_{ik} \frac{(\beta_k-\alpha_k)(u_{m+1,0}(x)-u_{m+1}^{*})}{\beta_{m+1}-\alpha_{m+1}}\right\}dx = 0.$$
Then, from \Cref{CorIdentity}, we have $\sum\limits_{k=m+1}^{i-1} P_{ik} (\beta_k - \alpha_k) = - P_{ii}(\beta_i-\alpha_i)$. This yields \eqref{ci1Eq} for $k=i$.\\
\indent Assume \eqref{ci1Eq} holds. From \Cref{CorIdentity}, we have that for every $m+2 \leq i \leq n$, 
$$ \int_{\Omega} \sum_{k=m+1}^{i} P_{ik} (u_{k,0}(.) - u_k^{*})  = \int_{\Omega} \sum_{k=m+1}^{i} P_{ik} \frac{\beta_k-\alpha_k}{\beta_{m+1}-\alpha_{m+1}} (u_{m+1,0}(.) - u_{m+1}^{*}) = 0.$$
This ends the proof of \eqref{InvquantZU}.
\end{proof}

\subsection{Proof of an observability estimate for linear finite dimensional systems}\label{ProofObsDimfinie}

The goal of this section is to give a self-contained proof of \Cref{lemmaCobs}. By the Hilbert Uniqueness Method (see \cite[Theorem 2.44]{C}), it suffices to show the following null-controllability result for finite dimensional systems.
\begin{prop}
There exist $C >0$, $p_1, p_2 \in \N$ such that for every $\tau \in (0,1)$, $\lambda \geq \lambda_1 $ with $\lambda_1$ the first positive eigenvalue of $(-\Delta, H_{Ne}^2(\Omega))$, $y_0 \in \R^n$, there exists a control $h \in L^2(0,\tau;\R^m)$ verifying
\begin{equation}
\label{EsticontrolFiniteDim}
\norme{h}_{L^2(0,T;\R^m)}^{2} \leq C \left(1 + \frac{1}{\tau^{p_1}} + \lambda^{p_2}\right) \norme{y_0}_{\R^n}^{2}
\end{equation}
such that the solution $y \in L^2(0,\tau;\R^n)$ of
\begin{equation}
\label{SystDimFiniGene}
\left\{
\begin{array}{ll}
y' = Ay + Bh,& \text{in}\ (0,\tau),\\
y(0)=y_0& \text{in}\ \R^n,
\end{array}
\right.
\end{equation}
where $A = -\lambda D_J + A_J$ (see \eqref{defNewD}, \eqref{defcouplA} and \eqref{CouplNonZeroA}) and $B = \begin{pmatrix} I_m\\
(0)\end{pmatrix}\in \mathcal{M}_{n,m}(\R)$,
satisfies $y(\tau)=0$.
\end{prop}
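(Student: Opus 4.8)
The plan is to construct the minimal $L^2$-norm control explicitly through the controllability Gramian and then to estimate its cost; the entire difficulty is a quantitative lower bound on the Gramian with \emph{polynomial} (not exponential) dependence on $\lambda$ and $\tau$. Set $A := -\lambda D_J + A_J$ and introduce the Gramian $G_{\tau} := \int_0^{\tau} e^{sA}BB^{\text{tr}}e^{sA^{\text{tr}}}\,ds$. The pair $(A,B)$ satisfies the Kalman rank condition: $B$ reaches the components $1,\dots,m$ directly, the entry $a_{m+1,m}\neq 0$ (see \eqref{CouplNonZeroA}) propagates controllability to component $m+1$, and the subdiagonal entries $-\lambda$ of $D_{\sharp}$ (see \eqref{defNewD}), nonzero since $\lambda\geq\lambda_1>0$, propagate it to $m+2,\dots,n$. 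Hence $G_{\tau}$ is invertible, and the control
\[
h(s) := -B^{\text{tr}}e^{(\tau-s)A^{\text{tr}}}G_{\tau}^{-1}e^{\tau A}y_0
\]
steers $y_0$ to $0$ at time $\tau$ with cost $\norme{h}_{L^2(0,\tau;\R^m)}^2 = \psc{G_{\tau}^{-1}e^{\tau A}y_0}{e^{\tau A}y_0} \leq \norme{G_{\tau}^{-1}}\,\norme{e^{\tau A}}^2\,\norme{y_0}^2$. It therefore suffices to bound $\norme{e^{\tau A}}$ and $\norme{G_{\tau}^{-1}}$.

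First I would show $\norme{e^{\tau A}} \leq C$ uniformly in $\lambda\geq\lambda_1$ and $\tau\in(0,1)$. Under \Cref{Assdkneqdl} the diagonal of the lower-triangular matrix $D_{\sharp}$ consists of distinct positive numbers, so $D_{\sharp}$ is diagonalizable by a matrix $S$ \emph{independent of $\lambda$}; combined with the fact that the first block of $D_J$ is symmetric positive, this gives $\norme{e^{-\lambda t D_J}} \leq C_0$ for all $\lambda\geq 0$, $t\geq 0$. The Duhamel formula $e^{\tau A} = e^{-\lambda\tau D_J} + \int_0^{\tau} e^{-\lambda(\tau-s)D_J}A_J\,e^{sA}\,ds$ and Grönwall's lemma then yield $\norme{e^{\tau A}} \leq C_0\, e^{C_0\norme{A_J}\tau} \leq C$ for $\tau<1$, uniformly in $\lambda$.

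The crux is the lower bound on $G_{\tau}$. Writing $\psi(s) := e^{sA^{\text{tr}}}z$, one has $\psc{G_{\tau}z}{z} = \int_0^{\tau}\sum_{i=1}^m \abs{\psi_i(s)}^2\,ds$, so a lower bound by $\norme{z}^2$ is exactly the observability estimate, and the transpose cascade allows one to reconstruct the unobserved components from the observed ones: differentiating $\psi' = A^{\text{tr}}\psi$ and using $a_{m+1,m}\neq 0$ recovers $\psi_{m+1}$ from $\psi_m'$ and $\psi_1,\dots,\psi_m$, while each superdiagonal entry $-\lambda$ recovers $\psi_{i+1} = -\tfrac1\lambda\psi_i' - d_i\psi_i + \tfrac1\lambda\sum_{j\leq m+1}a_{ji}\psi_j$. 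Iterating, each $\psi_i(0)$ with $i>m$ is a linear combination of the boundary data $\partial_s^l\psi_k(0)$ ($k\leq m$, $0\leq l\leq n-m$) with coefficients of size $O(1+\lambda^{n-m})$, giving the quantitative dual Kalman estimate $\norme{z}^2 \leq C\,(1+\lambda^{2(n-m)})\sum_{k\leq m}\sum_{l=0}^{n-m}\abs{\partial_s^l\psi_k(0)}^2$.

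Finally I would convert these endpoint derivatives into the time integral. A quantitative interpolation inequality on $[0,\tau]$ (Taylor expansion together with equivalence of norms on the finite-dimensional space of polynomials of degree $\leq n-m$; cf. \cite{Se}) bounds $\tau^{2(n-m)+1}\sum_{k,l}\abs{\partial_s^l\psi_k(0)}^2$ by $C\int_0^{\tau}\sum_{k\leq m}\abs{\psi_k}^2$ plus a remainder controlled by $\sup_{[0,\tau]}\abs{\partial_s^N\psi_k} \leq C\lambda^N\norme{z}$. Combining this with the reconstruction of the previous paragraph and the uniform bound $\norme{e^{\tau A}}\leq C$ gives $\psc{G_{\tau}z}{z} \geq \big(C(1+\tau^{-p_1}+\lambda^{p_2})\big)^{-1}\norme{z}^2$, hence $\norme{G_{\tau}^{-1}} \leq C(1+\tau^{-p_1}+\lambda^{p_2})$ with $p_1 = p_2 = 2(n-m)+1$, which inserted into the cost estimate of the first paragraph yields \eqref{EsticontrolFiniteDim}. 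I expect the main obstacle to be precisely this last bookkeeping: the exponential-in-$\lambda$ bound is immediate from abstract Kalman theory, but extracting the polynomial rate requires balancing the $\lambda^{2N}$ growth of the Taylor remainder against the $\tau^{-(2(n-m)+1)}$ gain of the interpolation inequality, i.e. choosing the number of derivatives $N$ and, if necessary, rescaling time by $1/\lambda$ so that the regimes of large $\lambda$ and small $\tau$ do not conflict.
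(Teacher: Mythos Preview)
Your Gramian-plus-quantitative-observability route is a genuinely different approach from the paper's and leads to the correct result, though the bookkeeping you flag at the end is indeed the heart of the matter. The paper bypasses it entirely by working on the primal side with an explicit Brunovsky-type construction: it multiplies the free solution $\overline{y}=e^{tA}y_0$ by a cut-off $\eta$ (equal to $1$ near $t=0$ and to $0$ near $t=\tau$) in the components $i\in\{1,\dots,m-1,n\}$, then rebuilds $y_{n-1},\dots,y_m$ by reverse cascade directly from the equations of the system, and finally sets $h:=y'-Ay$, which automatically has only its first $m$ components nonzero. The polynomial cost then drops out of the single estimate $\norme{\overline y^{(l)}}_{L^2(0,\tau)}\leq C(1+\lambda^{l-1/2})\norme{y_0}$ (the extra $\lambda^{-1/2}$ coming from the time integral of the decaying semigroup $e^{tA}$) combined with Leibniz and the cascade recursion; this yields $p_1=p_2=2(n-m)+1$, matching the exponents you predict.

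Your scheme is valid, but the rescaling $s\mapsto s/\lambda$ must be handled with care: after rescaling, the generator becomes $-D_J^{\text{tr}}+\lambda^{-1}A_J^{\text{tr}}$, and the sole zero-order coupling $a_{m+1,m}$---which is precisely what makes the pair Kalman-controllable---degenerates to $a_{m+1,m}/\lambda$, so the limiting pair $(-D_J,B)$ is \emph{not} controllable and no $\lambda$-uniform observability constant exists for the rescaled system. One must instead track that the Kalman determinant of the rescaled pair is of order $\lambda^{-(n-m)}$ and carry that factor through the interpolation estimate. This can be made to work and recovers the polynomial bound, but it is noticeably more laborious than the paper's direct construction, whose main advantage is that working with $\overline{y}$ in $L^2(0,\tau)$ automatically captures the parabolic $\lambda^{-1/2}$ gain that your pointwise Taylor remainder bound $|\partial_s^N\psi|\leq C\lambda^N\norme{z}$ throws away.
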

\begin{rmk}
We do not treat the case $\lambda_0=0$ with initial data $y_0 \in \R^{m+1} \times \{0\}^{n-m-1}$ because it is a simple adaptation of the following proof.
\end{rmk}
\begin{proof}
Let $\tau \in (0,1)$, $\lambda \geq \lambda_1 $, $y_0 \in \R^n$.\\
\indent \textbf{Step 1: Construction of the control $h$ by a Brunovsky approach.} We start by defining $\overline{y}$ to be the free solution of the system \eqref{SystDimFiniGene} (take $h = 0$). We have $\overline{y}(t) = e^{tA} y_0 = e^{t(-\lambda D_J + A_J)} y_0$. We easily have that for any $l \geq 0$, 
\begin{equation}
\label{estiZkexp}
\norme{\overline{y}^{(l)}}_{L^2(0,\tau;\R^n)} \leq C(1 + \lambda^{l-1/2})\norme{y_0}_{\R^n}.
\end{equation}
\indent We choose a cut-off function $\eta \in C^{\infty}([0,\tau]; \R)$ such that $\eta =1 $ on $[0,\tau/3]$ and $\eta = 0$ on $[2\tau/3, \tau]$ verifying:
\begin{equation}
\label{estimateeta}
\forall p \in \N,\ \forall t \in [0,\tau],\ |\eta^{(p)}(t)| \leq \frac{C_p}{\tau^p}.
\end{equation}
We start by choosing for every $i \in \{1, \dots, m-1,n\}$,
\begin{equation} \label{defZknZiCacadebis}y_{i}(t) := \eta(t) \overline{y_{i}}(t). \end{equation}
Then, by using the cascade form of \eqref{SystDimFiniGene}, we define by reverse induction on $i \in \{n-1, n-2, \dots, m+1\}$,
\begin{equation} \label{defZkn-1}y_i(t) := - \frac{1}{\lambda}\left(y_{i+1}'(t) + \lambda d_{i+1} y_{i+1}(t) \right).\end{equation}
Then, $y_{m}$ is defined by the equation number $(m+1)$ by
\begin{equation}
\label{defZkj}
y_{m}(t) := \frac{1}{a_{m+1,m}}\left(y_{m+1}'(t) + \lambda d_{m+1} y_{m+1}(t) -\sum\limits_{\substack{s=1\\s \neq m}}^{n} a_{m+1,s} y_{s}(t)\right).
\end{equation}
Finally, we set for the control
\begin{equation}
\label{defHJcascadebis}
h:= y' -A y.
\end{equation}
By \eqref{defZkj} and \eqref{defHJcascadebis}, $h$ is of the form $h=(h_1, \dots, h_m, 0, \dots, 0)$.\\
\indent \textbf{Step 2: Properties of the solution $y$ and estimate of the control $h$.} First, we remark that,
\begin{equation}
\label{solzbar}
\forall 1 \leq i \leq n,\ 
\left\{
\begin{array}{ll}
y_{i} = \overline{y_{i}}, & \text{in}\ [0,\tau/3],\\
y_{i} = 0, & \text{in}\ [2\tau/3,\tau].
\end{array}
\right.
\end{equation}
Indeed, the property \eqref{solzbar} is clear for $i \in \{1, \dots, m-1,n\}$ by definition \eqref{defZknZiCacadebis}. Then, we prove \eqref{solzbar} by reverse induction on $m \leq i \leq n$ by using \eqref{defZkn-1}, \eqref{defZkj} and the definition of $\overline{y}$, for instance, for $t \in [0,\tau/3]$:
$$y_{n-1}(t) = - \frac{1}{\lambda}\left(y_{n}'(t) + \lambda d_n y_{n}(t) \right) = - \frac{1}{\lambda}\left(\overline{y_{n}}'(t) + \lambda d_n \overline{y_{n}}(t)\right) =\overline{y_{n-1}}(t).$$
Now, we have by \eqref{defZknZiCacadebis}, \eqref{estimateeta} and \eqref{estiZkexp} that for every $i \in \{1, \dots, m-1\}$,
\begin{equation}
\label{estiZZ'2}
\sum\limits_{l=0}^1 \norme{y_i^{(l)}}_{L^2(0,\tau;\R^n)} \leq C \left(1+ \frac{1}{\tau^{1/2}} + \lambda^{1/2}\right) \norme{y_0}_{\R^n}.
\end{equation}
Then, we easily prove by reverse induction on $m \leq i \leq n$ by using \eqref{estiZkexp}, \eqref{estimateeta}, \eqref{defZknZiCacadebis}, \eqref{defZkn-1}, \eqref{defZkj} and \eqref{estiZZ'2}
\begin{align}
\label{estiZZ'1}
\sum\limits_{l=0}^{i+1-m} \norme{y_{i}^{(l)}}_{L^2(0,\tau;\R^n)} \leq C \left(1 + \frac{1}{\tau^{n-m+1/2}} + \lambda^{n-m+1/2}\right) \norme{y_{0}}_{L^2(0,\tau;\R^n)}.
\end{align}
\indent Hence, the control $h$ and the state $y$ satisfy \eqref{EsticontrolFiniteDim}, \eqref{SystDimFiniGene} with $p_1 = p_2 = 2(n-m+1/2)$ and $y(\tau)=0$.
\end{proof}

\subsection{Source term method in $L^r$ for $r \in \{2,+\infty\}$}
\label{AppSourceTermMethod}
We use the same notations as in the beginning of \Cref{SourcetermL^2}. The goal of this section is to prove \Cref{PropSourceTermL^2} and \Cref{ControlPropEstiLTTLinfty}.  We have the following result.
\begin{prop}\label{MethodLTTLinfty}
For every $S \in \mathcal{S}_r$ and $Z_0 \in L_{inv}^r$, there exists $H^J \in \mathcal{H}_r$, such that the solution $Z$ of \eqref{defsystZSourceS} satisfies $Z \in \mathcal{Z}_r$. Furthermore, there exists $C >0$, not depending on $S$ and $Z_0$, such that
\begin{equation}
\label{EstiLTTr}
\norme{Z/\rho_0}_{L^{\infty}([0,T];L^r(\Omega)^n)} + \norme{H^J}_{\mathcal{H}_r} \leq C_T \left( \norme{Z_0}_{L^{r}(\Omega)^n} + \norme{S}_{\mathcal{S}_r}\right),
\end{equation}
where $C_T = C e^{C/T}$.
\end{prop}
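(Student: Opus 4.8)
The plan is to establish \Cref{MethodLTTLinfty} for both values $r \in \{2,+\infty\}$ simultaneously by a time-splitting argument in the spirit of \cite{LTT}, iterating on a geometric mesh the null-controllability result with explicit cost: \Cref{theolinear} when $r=2$ and \Cref{theolinearLinfty} when $r=+\infty$. First I would fix an increasing sequence $(a_k)_{k \geq 0}$ with $a_0 = 0$ and $a_k \to T$, chosen so that $T - a_k$ decays geometrically at the rate dictated by the weights \eqref{defrho0bis} and \eqref{defrhoG}; the natural choice makes the interval lengths $\ell_k := a_{k+1} - a_k$ comparable to $T - a_k$ and to $T - a_{k+1}$, so that $\rho_0$ and $\rho_{\mathcal{S}}$ are each comparable to their endpoint values on every $I_k := (a_k, a_{k+1})$.

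On each $I_k$ I would decompose the solution as $Z = v_k + w_k$, where $w_k$ carries the source and $v_k$ the control: $w_k$ solves the uncontrolled system \eqref{defsystZSourceS} with $H^J = 0$ and $w_k(a_k) = 0$, while $v_k$ solves the homogeneous controlled system with $v_k(a_k) = y_k$ (the value produced by the previous step, with $y_0 = Z_0$) and with a control driving $v_k(a_{k+1}) = 0$, furnished by \Cref{theolinear} (resp. \Cref{theolinearLinfty}) with cost $C_{\ell_k} = C e^{C/\ell_k}$. Setting $y_{k+1} := w_k(a_{k+1})$ and concatenating the controls yields a global solution with $Z(a_k) = y_k$ for every $k$. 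A key point is that all the $y_k$ remain in $L_{inv}^r$: since the components $i \geq m+2$ of \eqref{defsystZSourceS} obey $\partial_t z_i - d_i \Delta z_i = \Delta z_{i-1} + S_i$ with Neumann conditions and $S \in \mathcal{S}_r$ has zero spatial mean, integration over $\Omega$ preserves $\int_\Omega z_i = 0$; this is exactly what makes the null-controllability results, stated for data in $L_{inv}^2$, applicable at every step.

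Next I would run the quantitative estimates. For $w_k$ I would use the well-posedness bounds \eqref{estl2faible} (for $r=2$) and \eqref{estl2faiblelinfty} (for $r=+\infty$) to get $\norme{w_k} \leq C \norme{S}_{L^r(I_k)}$, hence $\norme{y_k} \leq C \rho_{\mathcal{S}}(a_{k-1}) \norme{S}_{\mathcal{S}_r}$; for $v_k$ I would combine the control cost $C_{\ell_k}\norme{y_k}$ with the same well-posedness estimates, noting that in the case $r=+\infty$ one first passes from $\norme{y_k}_{L^2(\Omega)^n}$ to $\norme{y_k}_{L^{\infty}(\Omega)^n}$ via the continuous embedding $L^{\infty}(\Omega) \hookrightarrow L^2(\Omega)$. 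Weighting by $\rho_0^{-1}$ and summing over $k$, the estimate \eqref{EstiLTTr} reduces to the convergence of a series whose general term is controlled by $\rho_0(a_k)^{-1}\, e^{C/\ell_k}\, \rho_{\mathcal{S}}(a_{k-1})$; the explicit exponents in \eqref{defrho0bis} and \eqref{defrhoG}, together with $q < \sqrt{2}$ and the strict positivity $\gamma = 2p - (1+p)q^2 > 0$ recorded in \eqref{deflambda}, make this series geometrically convergent. Finally, $\norme{y_k} \to 0$ forces $Z(T) = 0$, consistently with $\rho_0$ being continuous and $\rho_0(T) = 0$.

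The main obstacle I anticipate is the precise matching between the blow-up $e^{C/\ell_k}$ of the control cost near $T$ and the decay of the weights: I expect the bulk of the work to be the bookkeeping that turns the per-interval bounds into a convergent geometric series, i.e. verifying that the chosen mesh together with $q \in (1,\sqrt{2})$ and $2p > (1+p)q^2$ makes every exponential factor $\exp(c\, q^k / T)$ coming from the cost strictly dominated by the decay extracted from $\rho_0$ and $\rho_{\mathcal{S}}$. A secondary care point is treating $r = 2$ and $r = +\infty$ uniformly: the same construction works verbatim, but the state estimates must be read off from \eqref{estl2faible} together with the embedding \eqref{injclassique} in the first case, and from \eqref{estl2faiblelinfty} in the second.
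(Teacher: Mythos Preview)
Your proposal is correct and follows essentially the same route as the paper's proof: a geometric time mesh $T_k = T(1-q^{-k})$, a splitting on each $(T_k,T_{k+1})$ into a source-only piece with zero initial data and a controlled piece with zero source driven to $0$ by \Cref{theolinear} or \Cref{theolinearLinfty}, then concatenation and summation of the weighted estimates. One small refinement: the paper's specific choice of mesh and weights yields the exact identity $M e^{M/(T_{k+2}-T_{k+1})}\rho_{\mathcal{S}}(T_k)=\rho_0(T_{k+2})$, so the bookkeeping you anticipate reduces to a one-line computation rather than a genuine series comparison, and in particular the condition $\gamma>0$ from \eqref{deflambda} is not needed here (it enters only later in \Cref{Fixed point}).
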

The proof is inspired by \cite[Proposition 2.6]{BM}.
\begin{proof}
\indent For $k \geq 0$, we define $T_k = T(1-q^{-k})$. On the one hand, let $a_0 = Z_0$ and, for $k \geq 0$, we define $a_{k+1} = Z_S(T_{k+1}^{-},.)$ where $Z_S$ is the solution to
\begin{equation}
\label{defsystZG}
\left\{
\begin{array}{l l}
 \partial_t Z_{S} - D_J \Delta Z_{S} = A_J Z_{S} + S&\mathrm{in}\ (0,T)\times\Omega,\\
\frac{\partial Z_{S}}{\partial \nu} = 0 &\mathrm{on}\ (0,T)\times\partial\Omega,\\
Z_{S}(T_k^{+},.)=0 &\mathrm{in}\  \Omega.
\end{array}
\right.
\end{equation}
From \Cref{wpl2linfty}, using the estimates \eqref{estl2faible} and \eqref{injclassique} for $r=2$  or \eqref{estl2faiblelinfty} and \eqref{injclassique} for $r=+\infty$, we have
\begin{equation}
\label{estilinfty_LTT}
\norme{a_{k+1}}_{L^{r}(\Omega)^n} \leq \norme{Z_{S}}_{L^{\infty}([T_k,T_{k+1}];L^{r}(\Omega)^n)} \leq C\norme{S}_{L^{r}((T_k,T_{k+1});L^{r}(\Omega)^n)}.
\end{equation}
On the other hand, for $k \geq 0$, we also consider the control systems
\begin{equation}
\label{defsystZH}
\left\{
\begin{array}{l l}
 \partial_t Z_{H} - D_J \Delta Z_{H} = A_J Z_{H} + H^{J} 1_{\omega}&\mathrm{in}\ (0,T)\times\Omega,\\
\frac{\partial Z_{H}}{\partial \nu} = 0 &\mathrm{on}\ (0,T)\times\partial\Omega,\\
Z_{H}(T_k^{+},.)=a_k &\mathrm{in}\  \Omega.
\end{array}
\right.
\end{equation}
Using \Cref{theolinear} for $r=2$ or \Cref{theolinearLinfty} for $r=+\infty$, we can define $H_k^{J} \in L^{r}((T_k,T_{k+1})\times\Omega)^m$ such that $Z_{H}(T_{k+1}^{-},.)=0$ and, thanks to the cost estimate \eqref{esticontrolTh} for $r=2$ or \eqref{esticontrolThLinfty} for $r=+\infty$ (recalling that $C_T \leq Me^{M/T}$),
\begin{equation}
\label{costlinftyTkTk+1} 
\norme{H_k^{J}}_{ L^{r}((T_k,T_{k+1})\times\Omega)^m} \leq M e^{\frac{M}{T_{k+1}-T_{k}}} \norme{a_k}_{L^2(\Omega)^n}.
\end{equation}
In particular, for $k=0$, we have
\begin{equation}
\label{costlinftyT0} 
\norme{H_0^{J}}_{ L^{r}((T_0,T_{1})\times\Omega)^m} \leq M e^{\frac{qM}{T(q-1)}} \norme{Z_0}_{L^2(\Omega)^n}.
\end{equation}
And, since $\rho_0$ is decreasing
\begin{equation}
\label{estiHT0}
\norme{H_0^{J}/\rho_0}_{L^{r}((T_0,T_{1})\times\Omega)^m} \leq  \rho_0^{-1}(T_1) M e^{\frac{qM}{T(q-1)}} \norme{Z_0}_{L^2(\Omega)^n}.
\end{equation}
For $k \geq 0$, since $\rho_{\mathcal{S}}$ is decreasing, combining \eqref{estilinfty_LTT} and \eqref{costlinftyTkTk+1} yields
\begin{equation}
\label{costH_k+1}
\norme{H_{k+1}^J}_{L^{r}((T_{k+1},T_{k+2})\times\Omega)^m} \leq C M e^{\frac{M}{T_{k+2}-T_{k+1}}} \rho_{\mathcal{S}}(T_k) \norme{S/\rho_{\mathcal{S}}}_{L^{r}((T_{k},T_{k+1})\times\Omega)^n}.
\end{equation}
In particular, by using $M e^{\frac{M}{T_{k+2}-T_{k+1}}} \rho_{\mathcal{S}}(T_k) = \rho_0(T_{k+2})$ (see \eqref{defrho0bis} and \eqref{defrhoG}), we have
\begin{align}
\label{costH_k+1bis}
\norme{H_{k+1}^J}_{L^{r}((T_{k+1},T_{k+2})\times\Omega)^m} & \leq C  \rho_0(T_{k+2}) \norme{S/\rho_{\mathcal{S}}}_{L^{r}((T_{k},T_{k+1})\times\Omega)^n}.
\end{align}
Then, from \eqref{costH_k+1bis}, by using the fact that $\rho_0$ is decreasing, 
\begin{equation}
\label{costH_k+1ter}
\norme{H_{k+1}^J/\rho_0}_{L^{r}((T_{k+1},T_{k+2})\times\Omega)^m}  \leq C  \norme{S/\rho_{\mathcal{S}}}_{L^{r}((T_{k},T_{k+1})\times\Omega)^n}.
\end{equation}
As in the original proof, we can paste the controls $H_{k}^J$ for $k \geq 0$ together by defining
\begin{equation}
H^{J} := \sum\limits_{k \geq 0} H_k^{J}.
\end{equation}
We have the estimate from \eqref{estiHT0} and \eqref{costH_k+1ter}
\begin{equation}
\norme{H^J}_{\mathcal{H}_r} \leq C \norme{S}_{\mathcal{S}_r} + C \rho_0^{-1}(T_1) M e^{\frac{qM}{T(q-1)}} \norme{Z_0}_{L^2(\Omega)^n}.
\end{equation}
The state $Z$ can also be reconstructed by concatenation of $Z_{S}$ + $Z_{H}$, which are continuous at each junction $T_k$ thanks to the construction. Then, we estimate the state. We use the energy estimate \eqref{estl2faible} for $r=2$ or \eqref{estl2faiblelinfty} for $r=+\infty$ from \Cref{wpl2linfty} on each time interval $(T_k, T_{k+1})$:
\begin{equation}
\norme{Z_S}_{L^{\infty}(T_k,T_{k+1};L^r(\Omega)^n)} \leq C \norme{S}_{L^{r}((T_k,T_{k+1})\times \Omega)^n},
\end{equation}
and
\begin{equation}
\norme{Z_H}_{L^{\infty}(T_k,T_{k+1};L^r(\Omega)^n)} \leq C\left( \norme{a_k}_{L^{r}(\Omega)^n} +  \norme{H_k^{J}}_{L^{r}((T_k,T_{k+1})\times \Omega)^m}\right).
\end{equation}
Proceeding similarly as for the estimate on the control, we obtain respectively
\begin{equation}
\norme{Z_S/\rho_0}_{L^{\infty}(T_k,T_{k+1};L^r(\Omega)^n)} \leq C M^{-1} \norme{S}_{\mathcal{S}_r},
\end{equation}
and
\begin{equation}
\norme{Z_H/\rho_0}_{L^{\infty}(T_k,T_{k+1};L^r(\Omega)^n)} \leq C M^{-1} \norme{S}_{\mathcal{S}_r} + C \rho_0^{-1}(T_1) M e^{\frac{qM}{T(q-1)}} \norme{Z_0}_{L^{\infty}(\Omega)^n}.
\end{equation}
Therefore, for an appropriate choice of constant $C >0$, $Z$ and $H^J$ satisfy \eqref{EstiLTTr}. This concludes the proof of \Cref{MethodLTTLinfty}.
\end{proof}

\subsection{Proof of a strong observability inequality}\label{proofStrongObs}
We take the same notations as in the beginning of \Cref{SourcetermL^2}. The goal of this section is to prove \Cref{CorStrongObs}.
\begin{proof}
We define $\mathcal{F}_1 : (Z_0, S) \in L_{inv}^2 \times \mathcal{S}_2 \mapsto Z(T,.) \in L_{inv}^2$, where $Z$ is the solution of \eqref{defsystZSourceS} with $H^J=0$ and $\mathcal{F}_2 : H^J \in \mathcal{H}_2  \mapsto Z(T,.) \in L_{inv}^2$ is the solution of \eqref{defsystZSourceS} with $(Z_0,S)=(0,0)$. It is easy to see that the null-controllability of \eqref{defsystZSourceS} is equivalent to $\text{Range}(\mathcal{F}_1) \subset \text{Range}(\mathcal{F}_2)$.\\
\indent From \cite[Lemma 2.48]{C}, we have that $\text{Range}(\mathcal{F}_1) \subset \text{Range}(\mathcal{F}_2)$ is equivalent to the observability inequality
\begin{equation}
\label{estiObsStronLem}\exists C_T > 0,\  \forall \varphi_T \in L_{inv}^2,\ \norme{\mathcal{F}_1^{*}(\varphi_T)}_{ L_{inv}^2 \times \mathcal{S}_2} \leq C_T \norme{\mathcal{F}_2^{*}(\varphi_T)}_{  \mathcal{H}_2}.
\end{equation}
Consequently, by using the null-controllability result for \eqref{defsystZSourceS}: \Cref{PropSourceTermL^2}, we have that \eqref{estiObsStronLem} holds true. Moreover, the constant $C_T$ in \eqref{estiObsStronLem} can be chosen such that $C_T \leq C e^{C/T}$ by using the cost estimate \eqref{EstiLTT} (see the proof of \cite[Theorem 2.44]{C} for more details between the constant of cost estimate and the constant of observability inequality).\\
\indent Duality arguments between $Z$, the solution of \eqref{defsystZSourceS}, and $\varphi$, the solution of \eqref{defsystZAdj}, lead to:
\begin{align*}\int_{\Omega} \mathcal{F}_1(Z_0,S)(x).\varphi_T(x)dx &= \int_{\Omega} Z_0(x).\varphi(0,x)dx + \int\int_{(0,T)\times\Omega} S.\varphi,\\
((Z_0,S),\mathcal{F}_1^{*}(\varphi_T))_{L^2(\Omega)^n \times \mathcal{S}_2}& =  \int_{\Omega} Z_0(x).\varphi(0,x)dx + \int\int_{(0,T)\times\Omega} S.\varphi \rho_{\mathcal{S}}^2 \rho_{\mathcal{S}}^{-2 },
\end{align*}
\begin{align*}\int_{\Omega} \mathcal{F}_2(H^J)(x).\varphi_T(x)dx &= \int\int_{(0,T)\times\omega} H^J.\varphi,\\
(H^J,\mathcal{F}_2^{*}(\varphi_T))_{\mathcal{H}_2}& =  \sum\limits_{i=1}^m \int\int_{(0,T)\times\Omega} h_i.\varphi_i \rho_{0}^2 1_{\omega} \rho_{0}^{-2 }.
\end{align*}
Consequently, by identification, we find
\begin{equation}
\label{IdentifAdj}
\mathcal{F}_1^{*}(\varphi_T) = (\varphi(0,.),\varphi \rho_{\mathcal{S}}^2) \in L^2(\Omega)^n \times \mathcal{S}_2,\ \quad\ \mathcal{F}_2^{*}(\varphi_T) = (\varphi_i \rho_{0}^2 1_{\omega} )_{1 \leq i \leq m}\in\mathcal{H}_2.
\end{equation}
Finally, by putting \eqref{IdentifAdj} in \eqref{estiObsStronLem}, we exactly obtain \eqref{ObsLTT} with $C_T = C e^{C/T}$. This ends the proof of \Cref{CorStrongObs}.
\end{proof}

\textbf{Acknowledgments.}
I would like to very much thank Karine Beauchard and Michel Pierre (Ecole Normale Supérieure de Rennes) for many fruitful, stimulating discussions, helpful advices. I am grateful to Joackim Bernier (Université de Rennes 1) for suggesting me the proof of \Cref{LemmaCombi}.

\small
\bibliographystyle{plain}
\small{\bibliography{bibliordnonlin}}

\begin{thebibliography}{10}

\bibitem{AKBD}
Farid Ammar~Khodja, Assia Benabdallah, and C{\'e}dric Dupaix.
\newblock Null-controllability of some reaction-diffusion systems with one
  control force.
\newblock {\em J. Math. Anal. Appl.}, 320(2):928--943, 2006.

\bibitem{AKBDGB3}
Farid Ammar-Khodja, Assia Benabdallah, C{\'e}dric Dupaix, and Manuel
  Gonz{\'a}lez-Burgos.
\newblock A {K}alman rank condition for the localized distributed
  controllability of a class of linear parbolic systems.
\newblock {\em J. Evol. Equ.}, 9(2):267--291, 2009.

\bibitem{AKBGBT}
Farid Ammar-Khodja, Assia Benabdallah, Manuel Gonz{\'a}lez-Burgos, and Luz
  de~Teresa.
\newblock Recent results on the controllability of linear coupled parabolic
  problems: a survey.
\newblock {\em Math. Control Relat. Fields}, 1(3):267--306, 2011.

\bibitem{B}
Viorel Barbu.
\newblock Local controllability of the phase field system.
\newblock {\em Nonlinear Anal.}, 50(3, Ser. A: Theory Methods):363--372, 2002.

\bibitem{BT}
Claude Bardos and Luc Tartar.
\newblock Sur l'unicit\'e r\'etrograde des \'equations paraboliques et quelques
  questions voisines.
\newblock {\em Arch. Rational Mech. Anal.}, 50:10--25, 1973.

\bibitem{BM}
Karine {Beauchard} and Frédéric {Marbach}.
\newblock {Unexpected quadratic behaviors for the small-time local null
  controllability of scalar-input parabolic equations}.
\newblock {\em ArXiv e-prints}, December 2017.

\bibitem{CGV}
Cristina Caputo, Thierry Goudon, and Alexis~F Vasseur.
\newblock {Solutions of the 4-species quadratic reaction-diffusion system are
  bounded and $C^\infty$, in any space dimension}.
\newblock {\em ArXiv e-prints:1709.05694}, September 2017.

\bibitem{C2}
Jean-Michel Coron.
\newblock Global asymptotic stabilization for controllable systems without
  drift.
\newblock {\em Math. Control Signals Systems}, 5(3):295--312, 1992.

\bibitem{C}
Jean-Michel Coron.
\newblock {\em Control and nonlinearity}, volume 136 of {\em Mathematical
  Surveys and Monographs}.
\newblock American Mathematical Society, Providence, RI, 2007.

\bibitem{CGMR}
Jean-Michel Coron, Sergio Guerrero, Philippe Martin, and Lionel Rosier.
\newblock {Homogeneity applied to the controllability of a system of parabolic
  equations}.
\newblock In {\em {2015 European Control Conference (ECC 2015)}}, Proceedings
  of the 2015 European Control Conference (ECC 2015), pages 2470--2475, Linz,
  Austria, July 2015.

\bibitem{CGR}
Jean-Michel Coron, Sergio Guerrero, and Lionel Rosier.
\newblock Null controllability of a parabolic system with a cubic coupling
  term.
\newblock {\em SIAM J. Control Optim.}, 48(8):5629--5653, 2010.

\bibitem{CG}
Jean-Michel Coron and Jean-Philippe Guilleron.
\newblock Control of three heat equations coupled with two cubic
  nonlinearities.
\newblock {\em SIAM J. Control Optim.}, 55(2):989--1019, 2017.

\bibitem{DHP}
Robert Denk, Matthias Hieber, and Jan Pr{\"u}ss.
\newblock Optimal {$L^p$}-{$L^q$}-estimates for parabolic boundary value
  problems with inhomogeneous data.
\newblock {\em Math. Z.}, 257(1):193--224, 2007.

\bibitem{FCNavierCo}
Enrique Fern\'andez-Cara.
\newblock A review of basic theoretical results concerning the
  {N}avier-{S}tokes and other similar equations.
\newblock {\em Bol. Soc. Esp. Mat. Apl. SeMA}, (32):45--73, 2005.

\bibitem{FCGBT}
Enrique Fern{\'a}ndez-Cara, Manuel Gonz{\'a}lez-Burgos, and Luz de~Teresa.
\newblock Controllability of linear and semilinear non-diagonalizable parabolic
  systems.
\newblock {\em ESAIM Control Optim. Calc. Var.}, 21(4):1178--1204, 2015.

\bibitem{FCGBGP}
Enrique Fern{\'a}ndez-Cara, Manuel Gonz{\'a}lez-Burgos, Sergio Guerrero, and
  Jean-Pierre Puel.
\newblock Null controllability of the heat equation with boundary {F}ourier
  conditions: the linear case.
\newblock {\em ESAIM Control Optim. Calc. Var.}, 12(3):442--465, 2006.

\bibitem{FCG}
Enrique Fern{\'a}ndez-Cara and Sergio Guerrero.
\newblock Global {C}arleman inequalities for parabolic systems and applications
  to controllability.
\newblock {\em SIAM J. Control Optim.}, 45(4):1399--1446 (electronic), 2006.

\bibitem{FCLdM}
Enrique Fern\'andez-Cara, Juan Limaco, and Silvano~Bezerra de~Menezes.
\newblock Controlling linear and semilinear systems formed by one elliptic and
  two parabolic {PDE}s with one scalar control.
\newblock {\em ESAIM Control Optim. Calc. Var.}, 22(4):1017--1039, 2016.

\bibitem{Fisch}
Julian Fischer.
\newblock Global existence of renormalized solutions to entropy-dissipating
  reaction-diffusion systems.
\newblock {\em Arch. Ration. Mech. Anal.}, 218(1):553--587, 2015.

\bibitem{FI}
Andrei~V. Fursikov and Oleg~Yu. Imanuvilov.
\newblock {\em Controllability of evolution equations}, volume~34 of {\em
  Lecture Notes Series}.
\newblock Seoul National University, Research Institute of Mathematics, Global
  Analysis Research Center, Seoul, 1996.

\bibitem{G}
Peng Gao.
\newblock Null controllability with constraints on the state for the
  reaction-diffusion system.
\newblock {\em Comput. Math. Appl.}, 70(5):776--788, 2015.

\bibitem{GBT}
Manuel Gonz{\'a}lez-Burgos and Luz de~Teresa.
\newblock Controllability results for cascade systems of {$m$} coupled
  parabolic {PDE}s by one control force.
\newblock {\em Port. Math.}, 67(1):91--113, 2010.

\bibitem{ImTak}
Oleg Imanuvilov and Tak\'eo Takahashi.
\newblock Exact controllability of a fluid-rigid body system.
\newblock {\em J. Math. Pures Appl. (9)}, 87(4):408--437, 2007.

\bibitem{JeLe}
David Jerison and Gilles Lebeau.
\newblock Nodal sets of sums of eigenfunctions.
\newblock In {\em Harmonic analysis and partial differential equations
  ({C}hicago, {IL}, 1996)}, Chicago Lectures in Math., pages 223--239. Univ.
  Chicago Press, Chicago, IL, 1999.

\bibitem{Kanel1}
Jacob Kanel.
\newblock The {C}auchy problem for a system of semilinear parabolic equations
  with balance conditions.
\newblock {\em Differentsial nye Uravneniya}, 20(10):1753--1760, 1984.

\bibitem{Kanel2}
Jacob Kanel.
\newblock Solvability in the large of a system of reaction-diffusion equations
  with the balance condition.
\newblock {\em Differentsial nye Uravneniya}, 26(3):448--458, 549, 1990.

\bibitem{LB2}
K\'evin {Le Balc'h}.
\newblock {Null-controllability of two species reaction-diffusion system with
  nonlinear coupling: a new duality method}.
\newblock {\em ArXiv e-prints:1802.09187}, February 2018.

\bibitem{LB}
Kévin Le~Balc'h.
\newblock Controllability of a 4 x 4 quadratic reaction–diffusion system.
\newblock {\em Journal of Differential Equations}, 2018, In press,
  arXiv:1711.08892.

\bibitem{LLR}
J{\'e}r{\^o}me Le~Rousseau and Gilles Lebeau.
\newblock On {C}arleman estimates for elliptic and parabolic operators.
  {A}pplications to unique continuation and control of parabolic equations.
\newblock {\em ESAIM Control Optim. Calc. Var.}, 18(3):712--747, 2012.

\bibitem{LR}
Gilles Lebeau and Luc Robbiano.
\newblock Contr\^ole exacte de l'\'equation de la chaleur.
\newblock In {\em S\'eminaire sur les \'{E}quations aux {D}\'eriv\'ees
  {P}artielles, 1994--1995}, pages Exp.\ No.\ VII, 13. \'Ecole Polytech.,
  Palaiseau, 1995.

\bibitem{LCMML}
Juan L{\'{\i}}maco, Marcondes Clark, Alexandro Marinho, Silvado~B. de~Menezes,
  and Aldo~T. Louredo.
\newblock Null controllability of some reaction-diffusion systems with only one
  control force in moving domains.
\newblock {\em Chin. Ann. Math. Ser. B}, 37(1):29--52, 2016.

\bibitem{LiZu}
Pierre Lissy and Enrique Zuazua.
\newblock {Internal observability for coupled systems of linear partial
  differential equations}.
\newblock {\em hal e-prints:01480301}.

\bibitem{LTT}
Yuning Liu, Tak\'eo Takahashi, and Marius Tucsnak.
\newblock Single input controllability of a simplified fluid-structure
  interaction model.
\newblock {\em ESAIM Control Optim. Calc. Var.}, 19(1):20--42, 2013.

\bibitem{MiTa}
Sorin Micu and Tak\'eo Takahashi.
\newblock Local controllability to stationary trajectories of a {B}urgers
  equation with nonlocal viscosity.
\newblock {\em J. Differential Equations}, 264(5):3664--3703, 2018.

\bibitem{P}
Michel Pierre.
\newblock Global existence in reaction-diffusion systems with control of mass:
  a survey.
\newblock {\em Milan J. Math.}, 78(2):417--455, 2010.

\bibitem{Se}
Thomas~I. Seidman.
\newblock How violent are fast controls?
\newblock {\em Mathematics of Control, Signals and Systems}, 1(1):89--95, Feb
  1988.

\bibitem{SoP}
Philippe {Souplet}.
\newblock {Global existence for reaction-diffusion systems with dissipation of
  mass and quadratic growth}.
\newblock {\em Journal of Evolution Equations, In press, ArXiv
  e-prints:1804.05193}, April 2018.

\bibitem{WZ}
Gensheng Wang and Liang Zhang.
\newblock Exact local controllability of a one-control reaction-diffusion
  system.
\newblock {\em J. Optim. Theory Appl.}, 131(3):453--467, 2006.

\bibitem{WYW}
Zhuoqun Wu, Jingxue Yin, and Chunpeng Wang.
\newblock {\em Elliptic \& parabolic equations}.
\newblock World Scientific Publishing Co. Pte. Ltd., Hackensack, NJ, 2006.

\end{thebibliography}

\end{document}